\providecommand{\tabularnewline}{\\}
\providecommand{\U}[1]{\protect\rule{.1in}{.1in}}
\newtheorem{theorem}{Theorem}
\newtheorem{corollary}[theorem]{Corollary}
\newtheorem{definition}[theorem]{Definition}
\newtheorem{lemma}[theorem]{Lemma}
\newtheorem{proposition}[theorem]{Proposition}
\newtheorem{remark}[theorem]{Remark}
\newenvironment{proof}[1][Proof]{\noindent\textbf{#1.} }{\ \rule{0.5em}{0.5em}}
\begin{document}

\title{Location, identification, and representability of monotone operators
in locally convex spaces}

\author{M.D. Voisei}

\date{{}}
\maketitle
\begin{abstract}
In this paper we study, in the relaxed context of locally convex spaces,
intrinsic properties of monotone operators needed for the sum conjecture
for maximal monotone operators to hold under classical interiority-type
domain constraints.
\end{abstract}

\section{Introduction and preliminaries}

The aim of this note is to reveal deeper properties of maximal montone
operators that enjoy, under a locally convex space settings, the classical
sum theorem which, in the literature, is sometimes called, when the
context is provided by Banach spaces, the Rockafellar conjecture. 

A breakthrough in the study of maximal monotone operators is represented
by the introduction, in 2006 in \cite[Theorem\ 2.3]{MR2207807}, of
a new characterization of maximal monotonicity based on the notions
of representability and ``NI-type'' operator (see \cite[Remark 3.5]{MR2453098}
or Remark \ref{NI method} below for more details). This characterization
works in locally convex spaces and, is the main argument used after
2006 in the majority of the articles concerning the calculus rules
for maximal monotone operators in general Banach spaces such as those
in \cite{MR2583910,MR2207807,MR2230783,MR2389004,MR2453098,astfnc,mmicq,MR2577332}. 

The present paper enhances the aforementioned maximality characterization
by presenting localized versions of it together with their direct
consequences.

The plan of the paper is as follows. Section $2$ presents the three
main notions studied in this article together with their immediate
properties and some variants. Section $3$ is concerned with the interplay
of these notions. Section $4$ contains the representability of the
sum of two representable operators. We conclude our article with some
open problems in Section $5$. 

\medskip

Throughout this paper, if not otherwise explicitly mentioned, $(X,\tau)$
is a non-trivial (that is, $X\neq\{0\}$) Hausdorff separated locally
convex space (LCS for short), $X^{\ast}$ is its topological dual
endowed with the weak-star topology $\omega^{\ast}$, the topological
dual of $(X^{\ast},\omega^{\ast})$ is identified with $X$, and the
weak topology on $X$ is denoted by $\omega$. 

We denote by $\mathscr{V}_{\tau}(x)$ the family of $\tau-$neighborhoods
of $x\in X$ and the convergence of nets in $(X,\tau)$ by $x_{i}\stackrel{\tau}{\to}x$. 

The \emph{duality product} or \emph{coupling} of $X\times X^{\ast}$
is denoted by $\left\langle x,x^{\ast}\right\rangle :=x^{\ast}(x)=:c(x,x^{\ast})$,
for $x\in X$, $x^{\ast}\in X^{\ast}$. As usual, with respect to
the dual system $(X,X^{*})$, we denote the \emph{orthogonal of} $S\subset X$
by $S^{\perp}:=\{x^{*}\in X^{*}\mid\langle x,x^{*}\rangle=0,$ for
every $x\in S\}$ and the \emph{support function} of $S$ by $\sigma_{S}(x^{*}):=\sup_{x\in S}\langle x,x^{*}\rangle$,
$x^{*}\in X^{*}$ while for $M\subset X^{*}$, the orthogonal of $M$
is denoted by $M^{\perp}:=\{x\in X\mid\langle x,x^{*}\rangle=0,$
for every $x\in M\}$ and its support function is $\sigma_{M}(x)=\sup_{x^{*}\in M}\langle x,x^{*}\rangle$,
$x\in X$.

To a multi-valued operator $T:X\rightrightarrows X^{\ast}$ we associate
its 
\begin{itemize}
\item \emph{graph}: $\operatorname*{Graph}T=\{(x,x^{\ast})\in X\times X^{\ast}\mid x^{\ast}\in Tx\}$, 
\item \emph{inverse:} $T^{-1}:X^{*}\rightrightarrows X$, $\operatorname*{gph}T^{-1}=\{(x^{*},x)\mid(x,x^{\ast})\in\operatorname*{Graph}T\}$, 
\item \emph{domain}: $D(T):=\{x\in X\mid Tx\neq\emptyset\}=\Pr\nolimits _{X}(\operatorname*{Graph}T)$,
and 
\item \emph{range}: $R(T):=\{x^{*}\in X^{*}\mid x^{*}\in Tx\ {\rm for\ some}\ x\in X\}=\Pr\nolimits _{X^{*}}(\operatorname*{Graph}T)$.
\\ Here $\operatorname*{Pr}{}_{X}$ and $\operatorname*{Pr}{}_{X^{*}}$
are the projections of $X\times X^{*}$ onto $X$ and $X^{\ast}$,
respectively. 
\item \emph{direct image:} $T(A):=\cup_{x\in A}Tx$, $A\subset X$. 
\end{itemize}
When no confusion can occur, $T:X\rightrightarrows X^{\ast}$ will
be identified with $\operatorname*{Graph}T\subset X\times X^{*}$.

\medskip

In the sequel, given a locally convex space $(E,\tau)$ and $S\subset E$,
the following notations are used: ``$\operatorname*{cl}_{\tau}S=\overline{S}^{\tau}$''
for the $\tau-$\emph{closure} of $S$, ``$\operatorname*{int}_{\tau}S$''
for the $\tau-$topological interior of $S$, ``$\operatorname*{bd}_{\tau}S=\overline{S}^{\tau}\setminus\operatorname*{int}_{\tau}S$''
for the boundary of $S$, ``$\operatorname*{conv}S$'' for the \emph{convex
hull} of $S$, ``$\operatorname*{aff}S$'' for the \emph{affine
hull} of $S$, and ``$S^{i}=\operatorname*{core}S$'' for the \emph{algebraic
interior} of $S$, ``$^{i}S$'' for the \emph{relative algebraic
interior} of $S$ with respect to $\operatorname*{aff}S$. When the
topology $\tau$ is implicitly understood the use of the $\tau-$notation
is avoided. 

A set $S\subset E$ is called \emph{algebraically open} if $S=\operatorname*{core}S$. 

We denote by $\iota_{S}$ the \emph{indicator} \emph{function} of
$S\subset E$ defined by $\iota_{S}(x):=0$ for $x\in S$ and $\iota_{S}(x):=\infty$
for $x\in E\setminus S$. 

The set $[x,y]:=\{tx+(1-t)y\mid0\le t\le1\}\subset E$ represents
the closed segment with end-points $x,y\in E$. 

For $f,g:E\rightarrow\overline{\mathbb{R}}:=\mathbb{R}\cup\{-\infty,+\infty\}$
we set $[f\leq g]:=\{x\in E\mid f(x)\leq g(x)\}$, $[f=g]$, $[f<g]$
and $[f>g]$ are similarly defined, while e.g. $f\ge g$ means $[f\ge g]=E$
or, for every $e\in E$, $f(e)\ge g(e)$. 

We consider the following classes of functions and operators on $X$: 
\begin{description}
\item [{$\Lambda(X)$}] is the class of proper convex functions $f:X\rightarrow\overline{\mathbb{R}}$.
Recall that $f$ is \emph{proper} if $\operatorname*{dom}f:=\{x\in X\mid f(x)<\infty\}$
is nonempty and $f$ does not take the value $-\infty$; 
\item [{$\Gamma_{\tau}(X)$}] is the class of functions $f\in\Lambda(X)$
that are $\tau$\textendash \emph{lower semi}\-\emph{continuous}
(\emph{$\tau$\textendash }lsc for short); when the topology is implicitly
understood the notation $\Gamma(X)$ is used instead; 
\item [{$\mathcal{M}(X)$}] is the class of non-void monotone operators
$T:X\rightrightarrows X^{\ast}$ ($\operatorname*{Graph}T\neq\emptyset$).
Recall that $T:X\rightrightarrows X^{\ast}$ is \emph{monotone} if
$\left\langle x_{1}-x_{2},x_{1}^{\ast}-x_{2}^{\ast}\right\rangle \geq0$
for all $x_{1},x_{2}\in D(T)$, $x_{1}^{\ast}\in Tx_{1}$, $x_{2}^{\ast}\in Tx_{2}$; 
\item [{$\mathfrak{M}(X)$}] is the class of \emph{maximal monotone} operators
$T:X\rightrightarrows X^{\ast}$. The maximality is understood in
the sense of graph inclusion as subsets of $X\times X^{\ast}$. 
\end{description}
Recall some notions associated to a proper function $f:X\rightarrow\overline{\mathbb{R}}$: 
\begin{description}
\item [{$\operatorname*{epi}f:=\{(x,t)\in X\times\mathbb{R}\mid f(x)\leq t\}$}] is
the \emph{epigraph} of $f$; 
\item [{$\operatorname*{cl}_{\tau}f:X\rightarrow\overline{\mathbb{R}},$}] the
$\tau$\emph{\textendash lsc hull} of $f$, is the greatest $\tau$\textendash lsc
function majorized by $f$; $\operatorname*{epi}(\operatorname*{cl}_{\tau}f)=\operatorname*{cl}_{\tau}(\operatorname*{epi}f);$ 
\item [{$\operatorname*{conv}f:X\rightarrow\overline{\mathbb{R}}$,}] the
\emph{convex hull} of $f$, is the greatest convex function majorized
by $f$; $(\operatorname*{conv}f)(x):=\inf\{t\in\mathbb{R}\mid(x,t)\in\operatorname*{conv}(\operatorname*{epi}f)\}$
for $x\in X$; 
\item [{$\overline{\operatorname*{conv}}^{\tau}f:X\rightarrow\overline{\mathbb{R}}$,}] the
\emph{$\tau-$lsc convex hull} of $f$, is the greatest \emph{$\tau$\textendash }lsc
convex function majorized by $f$; $\operatorname*{epi}(\overline{\operatorname*{conv}}^{\tau}\!f):=\overline{\operatorname*{conv}}^{\tau}(\operatorname*{epi}f)$; 
\item [{$f^{\ast}:X^{\ast}\rightarrow\overline{\mathbb{R}}$}] is the \emph{convex
conjugate} of $f:X\rightarrow\overline{\mathbb{R}}$ with respect
to the dual system $(X,X^{\ast})$, $f^{\ast}(x^{\ast}):=\sup\{\left\langle x,x^{\ast}\right\rangle -f(x)\mid x\in X\}$
for $x^{\ast}\in X^{\ast}$; 
\item [{$\partial f(x)$}] is the \emph{subdifferential} of the proper
function $f:X\rightarrow\overline{\mathbb{R}}$ at $x\in X$; $\partial f(x):=\{x^{\ast}\in X^{\ast}\mid\left\langle x^{\prime}-x,x^{\ast}\right\rangle +f(x)\leq f(x^{\prime}),\ \forall x^{\prime}\in X\}$
for $x\in X$ (it follows from its definition that $\partial f(x):=\emptyset$
for $x\not\in\operatorname*{dom}f$). Recall that $N_{C}=\partial\iota_{C}$
is the \emph{normal cone} to $C$, where $\iota_{C}(x)=0$, if $x\in C$,
$\iota_{C}(x)=+\infty$ otherwise; $\iota_{C}$ is the indicator\emph{
}function of $C\subset X$. 
\end{description}
For $(X,\tau)$ a LCS, let $Z:=X\times X^{\ast}$. It is known that
$(Z,\tau\times\omega^{\ast})^{\ast}=Z$ via the coupling 
\[
z\cdot z^{\prime}:=\left\langle x,x^{\prime\ast}\right\rangle +\left\langle x^{\prime},x^{\ast}\right\rangle ,\quad\text{for }z=(x,x^{\ast}),\ z^{\prime}=(x^{\prime},x^{\prime\ast})\in Z;
\]
$(Z,Z)$ is called the \emph{natural dual system}.

For a proper function $f:Z\rightarrow\overline{\mathbb{R}}$ all the
above notions are defined similarly. In addition, with respect to
the natural dual system $(Z,Z)$, the conjugate of $f$ is given by
\[
f^{\square}:Z\rightarrow\overline{\mathbb{R}},\quad f^{\square}(z)=\sup\{z\cdot z^{\prime}-f(z^{\prime})\mid z^{\prime}\in Z\},
\]
 and by the biconjugate formula, $f^{\square\square}=\overline{\operatorname*{conv}}^{\tau\times\omega^{\ast}}\!\!f$
whenever $f^{\square}$ (or $\overline{\operatorname*{conv}}^{\tau\times\omega^{\ast}}\!\!f$)
is proper.

We introduce the following classes of functions:
\begin{align*}
\mathscr{C}: & =\mathscr{C}(Z):=\{f\in\Lambda(Z)\mid f\geq c\},\\
\mathscr{R}: & =\mathscr{R}(Z):=\Gamma_{\tau\times\omega^{\ast}}(Z)\cap\mathscr{C}(Z),\\
\mathscr{D}: & =\mathscr{D}(Z):=\{f\in\mathscr{R}(Z)\mid f^{\square}\geq c\}.
\end{align*}

It is known that $[f=c]\in\mathcal{M}(X)$ for every $f\in\mathscr{C}(Z)$
(see e.g. \cite[Proposition 4(h)]{MR2086060}, \cite[Lemma\ 3.1]{MR2453098}).

\begin{lemma} \label{h>c} Let $X$ be a LCS and let $h\in\mathscr{C}$.
Then $[h=c]\subset[h^{\square}=c]$. If, in addition, $h\in\mathscr{D}$
then $[h=c]=[h^{\square}=c]$. \end{lemma}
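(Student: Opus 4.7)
My plan is to pin everything to the identity $c^{\square}=c$, where $c(z)=\langle x,x^{\ast}\rangle$, computed with respect to the natural dual system $(Z,Z)$. Indeed, for any $z=(x,x^{\ast})$,
\[
c^{\square}(z)=\sup_{z'=(x',x'^{\ast})}\bigl(\langle x-x',x'^{\ast}\rangle+\langle x',x^{\ast}\rangle\bigr);
\]
the inner supremum over $x'^{\ast}$ is $+\infty$ unless $x'=x$, forcing $c^{\square}(z)=\langle x,x^{\ast}\rangle=c(z)$. This is the single calculation that does all the work; everything else is monotonicity of the conjugate together with the biconjugate formula.

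For the first inclusion, take $h\in\mathscr{C}$ and $z\in[h=c]$. From $h\geq c$ one gets $h^{\square}\leq c^{\square}=c$ everywhere, so $h^{\square}(z)\leq c(z)$. On the other hand, testing the definition of $h^{\square}(z)$ at $z'=z$ and using $z\cdot z=2c(z)$ gives $h^{\square}(z)\geq 2c(z)-h(z)=c(z)$. Combining the two inequalities yields $h^{\square}(z)=c(z)$, i.e.\ $[h=c]\subset[h^{\square}=c]$.

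For the equality when $h\in\mathscr{D}$, I would apply the first part to $h^{\square}$ in place of $h$. Since $h\in\mathscr{R}$, standard convex analysis ensures that $h^{\square}$ is proper and convex (so $h^{\square}\in\Lambda(Z)$); combined with $h^{\square}\geq c$ (built into the definition of $\mathscr{D}$), this gives $h^{\square}\in\mathscr{C}$. The first part therefore yields $[h^{\square}=c]\subset[h^{\square\square}=c]$. Because $h\in\Gamma_{\tau\times\omega^{\ast}}(Z)$ is proper and convex, the biconjugate formula $h^{\square\square}=\overline{\operatorname*{conv}}^{\tau\times\omega^{\ast}}h=h$ stated earlier yields $[h^{\square}=c]\subset[h=c]$, and combined with the first inclusion, equality.

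There isn't really a hard step here; the only point requiring some care is verifying that $h^{\square}\in\mathscr{C}$ (properness of $h^{\square}$) so that the first part applies symmetrically, and tracking that the biconjugate is taken with respect to the natural dual system $(Z,Z)$ so that $h^{\square\square}=h$ genuinely holds. Everything else is an immediate consequence of $c^{\square}=c$ and the order-reversing property of the Fenchel conjugate.
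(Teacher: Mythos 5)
Your argument has a fatal flaw at its very first step: the identity $c^{\square}=c$ is false. In your own computation, the inner supremum over $x'^{\ast}$ equals $+\infty$ for every $x'\neq x$; since you are taking a \emph{supremum} over $x'$ (not an infimum), those branches dominate rather than being discarded, and in a non-trivial Hausdorff LCS one gets $c^{\square}\equiv+\infty$. (This is to be expected: $c$ is a bilinear coupling, not a convex function, so there is no reason for it to be fixed by biconjugation.) Consequently the chain $h^{\square}\leq c^{\square}=c$ collapses to the vacuous $h^{\square}\leq+\infty$, and the upper bound $h^{\square}(z)\leq c(z)$ at points of $[h=c]$ does not follow. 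The inequality $h^{\square}\leq c$ you are implicitly asserting is in fact false for typical $h\in\mathscr{C}$: take $X=\mathbb{R}$, $T=\{(u,u)\mid u\in\mathbb{R}\}$ and $h=\psi_{T}=c+\iota_{T}$; then $h\in\mathscr{C}$ but $h^{\square}=\varphi_{T}(x,y)=(x+y)^{2}/4\geq xy=c(x,y)$ everywhere, with equality only on the diagonal. Your lower bound $h^{\square}(z)\geq z\cdot z-h(z)=c(z)$ is correct, but it is only half of what is needed.

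The missing ingredient is the convexity of $h$, which is exactly what the paper's proof exploits: for $z\in[h=c]$, the inequality $h\geq c$ together with $h(z)=c(z)$ forces the directional derivatives to satisfy $h'(z;w)\geq\lim_{t\downarrow0}t^{-1}\bigl(c(z+tw)-c(z)\bigr)=z\cdot w$ for all $w$, whence $z\in\partial h(z)$ (this is where convexity enters), and the Fenchel equality $h(z)+h^{\square}(z)=z\cdot z=2c(z)$ then yields $h^{\square}(z)=c(z)$. Your treatment of the second assertion --- applying the first inclusion to $h^{\square}$, checking $h^{\square}\in\mathscr{C}$, and invoking $h^{\square\square}=h$ for $h\in\Gamma_{\tau\times\omega^{\ast}}(Z)$ --- is sound and coincides with the paper's, but it rests on the broken first part.
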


\begin{proof} Let $z\in[h=c]$. Then
\[
h'(z;w)=\lim_{t\downarrow0}\frac{h(z+tw)-h(z)}{t}\ge\lim_{t\downarrow0}\frac{c(z+tw)-c(z)}{t}=z\cdot w,\ \forall w\in Z,
\]
which shows that $z\in\partial h(z)$. Therefore $h(z)+h^{\square}(z)=z\cdot z=2c(z)$
and so $h^{\square}(z)=c(z)$. 

If, in addition, $h\in\mathscr{D}$ the stated equality follows from
$h=h^{\square\square}$ and the previously shown inclusion applied
for $h$ and $h^{\square}$. \end{proof}

\strut

To a multi\-function $T:X\rightrightarrows X^{\ast}$ we associate
the following functions: $c_{T}:Z\rightarrow\overline{\mathbb{R}}$,
$c_{T}:=c+\iota_{\operatorname*{Graph}T}$, $\psi_{T}:Z\rightarrow\overline{\mathbb{R}}$,
$\psi_{T}:=\operatorname*{cl}_{\tau\times\omega^{\ast}}(\operatorname*{conv}c_{T})$,
$\varphi_{T}:Z\rightarrow\overline{\mathbb{R}}$, $\varphi_{T}:=c_{T}^{\square}=\psi_{T}^{\square}\ -$
the \emph{Fitzpatrick function} of $T$. In expanded form 
\[
\varphi_{T}(z):=\varphi_{T}(x,x^{*}):=\sup\{z\cdot w-c(w)\mid w\in T\}=\sup\{\langle x,u^{*}\rangle+\langle u,x^{*}\rangle-\langle u,u^{*}\rangle\mid(u,u^{*})\in T\},
\]
 for $z=(x,x^{*})\in Z$. The function $\varphi_{T}$, $\psi_{T}$
were introduced first in \cite{MR1009594}, \cite{MR2207807}.

Recall that whenever $T\in\mathcal{M}(X)$, $\varphi_{T},\psi_{T}\in\Gamma_{\tau\times w^{*}}(Z)$. 

The set $T^{+}:=[\varphi_{T}\le c]$ describes all elements of $Z$
that are \emph{monotonically related} (m.r. for short) \emph{to} $T$.

Let us recall several properties of these functions.

\begin{theorem}\label{propr} Let $X$ be a LCS. \\\vspace{-.2cm}

\noindent \emph{(i)} For every $T\subset X\times X^{*}$, $T\subset(D(T)\times X^{*})\cup(X\times R(T))\subset[\varphi_{T}\ge c]$
\emph{(see \cite[Theorem\ 1.1]{MR2207807}, \cite[Proposition\ 3.2]{tscr-arxiv,MR2389004})},

\medskip

\noindent \emph{(ii)} $T\in\mathcal{M}(X)$ iff $T\subset[\varphi_{T}=c]$
iff $\psi_{T}\ge c$, \emph{(\cite[Proposition\ 3.2]{tscr-arxiv,MR2389004},
\cite[(8)]{MR2577332})}

\medskip

\noindent \emph{(iii)} $T\in\mathfrak{M}(X)$ iff $T\in\mathcal{M}(X)$,
$T=[\psi_{T}=c]$, and $\varphi_{T}\ge c$ \emph{(\cite[Theorems\ 2.2, 2.3]{MR2207807},
\cite[Theorem\ 1]{MR2577332})}.

\end{theorem}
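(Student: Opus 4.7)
The three parts form an increasing sequence of characterizations of monotonicity through the Fitzpatrick and $\psi_{T}$ functions, and the proofs stack. I would handle (i) first by direct substitution in the defining supremum for $\varphi_{T}$: the first containment is tautological, and for the second, given $(x,x^{*})\in D(T)\times X^{*}$, pick any $u^{*}\in Tx$ and test the sup at the admissible point $(u,u^{*})=(x,u^{*})\in T$; the cross-terms collapse to $\langle x,x^{*}\rangle=c(x,x^{*})$. The companion case $(x,x^{*})\in X\times R(T)$ is symmetric, picking $u\in T^{-1}x^{*}$ and testing at $(u,x^{*})\in T$.

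For (ii), unfolding the definition of monotonicity as $\langle x,u^{*}\rangle+\langle u,x^{*}\rangle-\langle u,u^{*}\rangle\leq\langle x,x^{*}\rangle$ for all $(x,x^{*}),(u,u^{*})\in T$ and taking the supremum in $(u,u^{*})$ gives $\varphi_{T}\leq c$ on $T$; combining with (i) yields $T\subseteq[\varphi_{T}=c]$, so the first equivalence is immediate from definitions. For the second equivalence, I would rephrase $\varphi_{T}\leq c$ on $T$ as $\varphi_{T}\leq c_{T}$ and dualize: conjugation reverses order, so $c_{T}^{\square\square}\geq\varphi_{T}^{\square}$, i.e., $\psi_{T}\geq\varphi_{T}$, where the biconjugate formula applies since $c_{T}$ is proper for nonempty $T$. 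Fenchel--Young in the natural dual system reads $\psi_{T}(z)+\varphi_{T}(z)\geq z\cdot z=2c(z)$, and adding this to $\psi_{T}\geq\varphi_{T}$ forces $2\psi_{T}\geq 2c$, hence $\psi_{T}\geq c$. The converse is a short bilinear computation: if $\psi_{T}\geq c$ and $z_{1}=(x_{1},x_{1}^{*}),z_{2}=(x_{2},x_{2}^{*})\in T$, evaluating $c\leq\psi_{T}\leq\operatorname*{conv}c_{T}$ at $\lambda z_{1}+(1-\lambda)z_{2}$ yields $\lambda(1-\lambda)\langle x_{1}-x_{2},x_{1}^{*}-x_{2}^{*}\rangle\geq 0$, i.e., pairwise monotonicity.

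For (iii), the key observation is that $T$ is maximal monotone iff $T$ is monotone and $T=T^{+}=[\varphi_{T}\leq c]$. I would invoke Lemma \ref{h>c} applied to $h=\psi_{T}$: (ii) places $\psi_{T}\in\mathscr{C}$ whenever $T$ is monotone, and adding $\varphi_{T}\geq c$ (i.e., $\psi_{T}^{\square}\geq c$) places $\psi_{T}\in\mathscr{D}$, whence $[\psi_{T}=c]=[\varphi_{T}=c]$. In the forward direction, maximality forces $\varphi_{T}(z)>c(z)$ for every $z\notin T$ (such $z$ is not monotonically related), while (i) and (ii) give $\varphi_{T}=c$ on $T$; hence $\varphi_{T}\geq c$ on $Z$, and the chain $T=T^{+}=[\varphi_{T}=c]=[\psi_{T}=c]$ closes. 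In the reverse direction, the three assumptions directly produce $\psi_{T}\in\mathscr{D}$, and the lemma together with $\varphi_{T}\geq c$ collapses $[\varphi_{T}\leq c]$ to $[\varphi_{T}=c]=[\psi_{T}=c]=T$, i.e., $T^{+}=T$. The only non-mechanical step in the whole argument is the $\psi_{T}\geq c$ half of (ii), for which the conjugation-plus-Fenchel--Young pairing is the natural tool; once secured, Lemma \ref{h>c} drives the rest of the cascade.
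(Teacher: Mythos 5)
The paper itself gives no proof of Theorem \ref{propr}: it is stated as a recollection and delegated entirely to the cited references, so there is no internal argument to compare against. Your proposal supplies a complete and essentially correct proof along the standard lines of those references: (i) by testing the supremum at $(x,u^{*})\in T$ resp.\ $(u,x^{*})\in T$; (ii) by reading monotonicity as $\varphi_{T}\le c$ on $T$, conjugating $\varphi_{T}\le c_{T}$ to get $\psi_{T}\ge\varphi_{T}$ and combining with Fenchel--Young $\psi_{T}+\varphi_{T}\ge 2c$, with the converse via evaluating $c\le\operatorname*{conv}c_{T}$ on a segment; (iii) by the identification of maximality with $T=[\varphi_{T}\le c]$ together with Lemma \ref{h>c} applied to $h=\psi_{T}\in\mathscr{D}$. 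Two small imprecisions are worth fixing. First, the displayed intermediate inequality ``$c_{T}^{\square\square}\ge\varphi_{T}^{\square}$'' is vacuous (both sides equal $\psi_{T}$); what you want is that conjugating $\varphi_{T}\le c_{T}$ gives $\varphi_{T}^{\square}\ge c_{T}^{\square}$, i.e.\ $\psi_{T}\ge\varphi_{T}$, which is indeed the conclusion you then use. Second, the biconjugate identity $\psi_{T}=c_{T}^{\square\square}$ requires properness of $\varphi_{T}=c_{T}^{\square}$ (or of $\psi_{T}$), not merely of $c_{T}$; this is available at that point because you have already shown $\varphi_{T}\le c$ on the nonempty set $T$ while $\varphi_{T}>-\infty$ everywhere, but the justification should say so. With those repairs the cascade through Lemma \ref{h>c} in part (iii) is exactly right.
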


For other properties of $\varphi_{T},\psi_{T}$ we refer to \cite{MR2207807,MR2389004,MR2453098,MR2594359,MR2577332}.

\strut

The following set properties are frequently used in the sequel; for
$M,N\subset X\times X^{*}$, $V,W\subset X$
\begin{itemize}
\item $M\cap(V\times X^{*})\subset N$ $\Rightarrow$ $(\operatorname*{Pr}_{X}M)\cap V\subset\operatorname*{Pr}_{X}N$;
\item $M\cap(V\times X^{*})\subset W\times X^{*}$ $\Leftrightarrow$ $(\operatorname*{Pr}_{X}M)\cap V\subset W$;
and
\item $\operatorname*{Pr}_{X}(M\cap(V\times X^{*}))=(\operatorname*{Pr}_{X}M)\cap V$.
\end{itemize}
Throughout this article the conventions $\sup\emptyset=-\infty$ and
$\inf\emptyset=\infty$ are observed.

\eject

\eject

\section{Definitions and properties}

\begin{definition} \label{def-ident}\emph{ Let $X$ be a LCS. A
subset $V\subset X$ }identifies\emph{ $T:X\rightrightarrows X^{*}$
or $T$ }is identified by\emph{ $V$ if $[\varphi_{T|_{V}}\le c]\cap V\times X^{*}\subset\operatorname*{Graph}T$.
Equivalently, $V$ identifies $T$ iff every $z=(x,x^{*})\in V\times X^{*}$
that is m.r. to $T|_{V}$ belongs to $T$. Here $T|_{V}:X\rightrightarrows X^{*}$
is defined by $(x,x^{*})\in T|_{V}$ if $x\in V$ and $(x,x^{*})\in T$
or $\operatorname*{Graph}T|_{V}=\operatorname*{Graph}T\cap(V\times X^{*})$.}
\end{definition}

Note that the empty set identifies any operator, but if a non-empty
$V\subset X$ identifies $T$ then $V\cap D(T)\neq\emptyset$. Indeed,
if $V\neq\emptyset$ and $V\cap D(T)=\emptyset$ then $\varphi_{T|_{V}}=-\infty$,
$[\varphi_{T|_{V}}\le c]\cap V\times X^{*}=V\times X^{*}\not\subset\operatorname*{Graph}T$,
that is, $V$ does not identify $T$. Hence this notion is interesting
only when $V\cap D(T)\neq\emptyset$. 

When $T$ is non-void monotone, $V$ identifies $T$ iff $T|_{V}$
is maximal monotone in $V\times X^{*}$, that is, $\operatorname*{Graph}T|_{V}$
has no proper monotone extension in $V\times X^{*}$ or \emph{$[\varphi_{T|_{V}}\le c]\cap V\times X^{*}=\operatorname*{Graph}(T|_{V})$}. 

\medskip

In the context of a Banach space $X$, a monotone operator $T$ is
called of type (FPV) or maximal monotone locally (notion first introduced
in \cite{MR1249266} and further studied in \cite{MR1333366}) if
for every open convex $V\subset X$ either $V\cap D(T)=\emptyset$
or $V$ identifies $T$. For the sake of language, notation simplicity,
and notion uniformity we introduce the terminology\emph{ }identifiable
as an extension of the type (FPV) notion to a general operator in
the context of locally convex spaces. 

\begin{definition} \label{def-identifiable}\emph{ Let $(X,\tau)$
be a LCS. An operator $T:X\rightrightarrows X^{*}$ is }($\tau-$)identifiable\emph{
if $T$ is identified by every ($\tau-$)open convex subset $V$ of
$X$ such that $V\cap D(T)\neq\emptyset$.} \end{definition}

Note that $X$ identifies a monotone operator $T$ iff $T$ is maximal
monotone. Therefore every identifiable monotone operator is maximal
monotone.

\medskip

The identifiability of a maximal monotone operator $T$ is interesting
only on sets $V$ with $D(T)\not\subset V$ since $T\in\mathfrak{M}(X)$
is identified by every $V$ that contains $D(T)$.

The $\tau-$identifiability of an operator depends explicitly on the
topology $\tau$ and not only on the duality $(X,X^{*})$. 

The identifiability notion unifies several other notions from the
literature. For example for $X$ a Banach space, $T:X\rightrightarrows X^{*}$
is \emph{locally maximal monotone} (notion introduced in \cite[p.\ 583]{MR1191009})
iff $T$ is monotone and $T^{-1}:X^{*}\rightrightarrows X$ is identified
by every norm-open convex subset of $X^{*}$. 

The identifiability of a monotone operator $T$ is intrinsically related
to the sum theorem. More precisely, if $T+N_{C}$ is maximal monotone,
for every $C\subset X$ closed convex with $D(T)\cap\operatorname*{int}C\neq\emptyset$
then $T$ is identifiable. Under a Banach space settings this implication
is known for some time (see e.g. \cite[Proposition 3.3]{MR1333366})
but it also holds in a locally convex space context. 

The sum conjecture {[}SC{]} is true in reflexive Banach spaces (see
e.g. \cite[Theorem 1(a), p. 76]{MR0282272}). Therefore every maximal
monotone operator in a reflexive Banach space is identifiable.

\medskip

The class of open convex sets arises naturally in the identification
of maximal monotone operators. That is not the case for the class
of closed convex sets (even when they have non-empty interiors). Assume
that $X$ is a LCS and the closed convex $C\subset X$ identifies
$T\in\mathcal{M}(X)$. Since $T|_{C}\subset T+N_{C}\in\mathcal{M}(X)$
and $D(T+N_{C})\subset C$ one gets $T|_{C}=T+N_{C}\subset T$. The
contrapositive form of this fact shows that a closed convex $C\subset X$
does not identify $T\in\mathcal{M}(X)$ if $T+N_{C}\in\mathfrak{M}(X)$
(which happens for example when $X$ is a reflexive Banach space and
$D(T)\cap\operatorname*{int}C\neq\emptyset$) and $D(T)\not\subset C$.
However, in general, $C$ can identify $T+N_{C}$ (see Theorem \ref{C-amt}
below) and that leads to our next notion. 

\begin{definition} \label{def-local}\emph{ Let $X$ be a LCS. A
subset $V\subset X$} locates\emph{ $T:X\rightrightarrows X^{*}$
}in\emph{ $S\subset X$ or $T$ }is located by\emph{ $V$ in $S$
if $\operatorname*{Pr}_{X}[\varphi_{T|_{V}}\le c]\cap V\subset S$
(or $[\varphi_{T|_{V}}\le c]\cap V\times X^{*}\subset S\times X^{*}$).
Equivalently, $V$ locates $T$ in $S$ iff every $z=(x,x^{*})\in V\times X^{*}$
that is m.r. to $T|_{V}$ has $x\in S$.}

\emph{An operator $T:X\rightrightarrows X^{*}$ is called }locatable
in\emph{ $S$ iff every open convex subset $V$ of $X$ such that
$V\cap D(T)\neq\emptyset$ locates $T$ in $S$. When $S=D(T)$ we
simply say that $V$ }locates\emph{ $T$ or $T$ }is located by\emph{
$V$ and that $T$ is }locatable\emph{.} \end{definition}

As previously seen, his notion is interesting only when $V\cap D(T)\neq\emptyset$
because if $V\cap D(T)=\emptyset$ then $\operatorname*{Pr}_{X}[\varphi_{T|_{V}}\le c]\cap V\subset S$
reduces to $V\subset S$. 

\medskip

A monotone operator $T$ is locatable if for every open convex set
$V\subset X$ with $V\cap D(T)\neq\emptyset$, $T|_{V}$ cannot be
extended outside $D(T)\cap V$, as a monotone operator in $V\times X^{*}$. 

\medskip

In the literature, for $X$ a Banach space, an operator $T$ is called
of type weak-FPV (notion first introduced in \cite{astfnc}) if every
open convex subset $V\subset X$ with $V\cap D(T)\neq\emptyset$ locates
$T$. The terminology locatable is used as an extension and a simplified
notation of the type weak-FPV notion to the general locally convex
space settings. 

\medskip

Every locatable operator in $S$ is locatable in $S'$, whenever $S\subset S'$.
Also, every identifiable operator is locatable because $V$ locates
$T$ whenever $V$ identifies $T$. However, there exist monotone
operators that are locatable but not identifiable. Take for example
$T=\{0\}\times(X^{*}\setminus\{0\})$ where $X$ is a LCS. Note that
$T$ is not identifiable since it is not maximal monotone. By a direct
verification $T$ is locatable. Indeed, if $V$ is open convex with
$0\in V$ and $z=(x,x^{*})\in V\times X^{*}$ is m.r. to $T|_{V}=T$
then $x=0$ because $\{0\}\times X^{*}$ is the unique maximal monotone
extension of $T$.

We will see later, in Theorem \ref{lo+re <-> id} below, that for
a maximal monotone operator in the general context of a locally convex
space the locatable and identifiable notions coincide. 

\medskip

An operator $T:X\rightrightarrows X^{*}$ is automatically located
by every $V\subset D(T)$. Therefore the location of an operator $T$
is interesting only on sets $V\not\subset D(T)$. 

\begin{lemma} \label{X-locate}Let $X$ be a LCS and let $T:X\rightrightarrows X^{*}$.
Then $X$ locates $T$ iff $\varphi_{T}\ge c$ and $\operatorname*{Pr}_{X}[\varphi_{T}=c]\subset D(T)$.
If, in addition, $T\in\mathcal{M}(X)$ then $X$ locates $T$ iff
$\varphi_{T}\ge c$ and $\operatorname*{Pr}_{X}[\varphi_{T}=c]=D(T)$.
\end{lemma}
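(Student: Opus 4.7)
The plan is to unwind Definition \ref{def-local} for $V=X$ and $S=D(T)$. Since $T|_X=T$, saying that $X$ locates $T$ is literally the inclusion $\operatorname*{Pr}_X[\varphi_T\le c]\subset D(T)$. So the first equivalence reduces to showing that this inclusion is the same as $\varphi_T\ge c$ together with $\operatorname*{Pr}_X[\varphi_T=c]\subset D(T)$.

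For the nontrivial forward implication, I would argue by contradiction. Suppose $X$ locates $T$ but $\varphi_T\not\ge c$, i.e., there exists $z=(x,x^*)\in Z$ with $\varphi_T(z)<c(z)$. Then in particular $z\in[\varphi_T\le c]$, so by hypothesis $x\in D(T)$. But Theorem \ref{propr}(i) gives $D(T)\times X^*\subset[\varphi_T\ge c]$, which forces $\varphi_T(z)\ge c(z)$, a contradiction. Hence $\varphi_T\ge c$, and then $\operatorname*{Pr}_X[\varphi_T=c]\subset\operatorname*{Pr}_X[\varphi_T\le c]\subset D(T)$. The reverse direction is a one-liner: under $\varphi_T\ge c$ the level sets $[\varphi_T\le c]$ and $[\varphi_T=c]$ coincide, so $\operatorname*{Pr}_X[\varphi_T\le c]=\operatorname*{Pr}_X[\varphi_T=c]\subset D(T)$.

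For the second statement, I only need to promote ``$\subset$'' to ``$=$'' in the projection condition when $T$ is monotone. This follows directly from Theorem \ref{propr}(ii), which gives $T\subset[\varphi_T=c]$, whence $D(T)=\operatorname*{Pr}_X T\subset\operatorname*{Pr}_X[\varphi_T=c]$; combined with the reverse inclusion (from $X$ locating $T$) this yields equality.

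No real obstacle arises here; the whole argument is an exercise in chasing definitions, with the one substantive ingredient being the invocation of Theorem \ref{propr}(i) to obtain $\varphi_T\ge c$ from the location hypothesis. The only care needed is to avoid conflating $[\varphi_T\le c]$ with $[\varphi_T<c]$ and to keep the projection formalism straight using the bullet-point rules listed just before the section heading.
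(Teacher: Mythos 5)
Your proof is correct and follows essentially the same route as the paper's: unwind Definition \ref{def-local} for $V=X$, then use Theorem \ref{propr}(i) (i.e.\ $D(T)\times X^*\subset[\varphi_T\ge c]$) to extract $\varphi_T\ge c$ from the location hypothesis, and Theorem \ref{propr}(ii) for the equality in the monotone case. The paper merely states this more tersely; your write-up fills in the same details.
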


\begin{proof} Condition $X$ locates $T$ comes to $[\varphi_{T}\le c]\subset D(T)\times X^{*}$.
The conclusion follows after we take Theorem \ref{propr}  (i), (ii)
into account, that is, for every $T:X\rightrightarrows X^{*}$, $D(T)\times X^{*}\subset[\varphi_{T}\ge c]$
and that $T\subset[\varphi_{T}=c]$ whenever $T\in\mathcal{M}(X)$.
\end{proof}

\strut

Therefore the localization of an operator $T$ by $X$ depends on
the condition $\varphi_{T}\ge c$ also known as $T$ is of type negative-infimum
(NI for short); notion that was first used in \cite{MR2207807} and
introduced in \cite[Remark.\ 3.5]{tscr-arxiv,MR2453098}. Therefore,
according to Lemma \ref{X-locate}, every maximal monotone operator
is NI because $T\in\mathfrak{M}(X)$ $\Leftrightarrow$ $X$ identifies
$T$ $\Rightarrow$ $X$ locates $T$. 

It must be said that our NI notion differs fundamentally from the
NI notion introduced, for $X$ a Banach space, in $X^{*}\times X^{**}$
by Simons (see e.g. \cite[Definition 25.5, p. 99]{MR1723737}). The
NI-operators in the sense of Simons coincide with those of dense-type
in the sense of Gossez introduced in \cite{MR0313890} (see \cite{MR2731293}).
Since the dense-type property is stronger and has been introduced
prior to the NI class in the sense of Simons it is our opinion that
the use of NI notion in the sense of Simons is obsolete. Another essential
difference between these two notions, besides the underlying space
context, is that every maximal monotone operator is NI in the current
sense while not every maximal monotone operator is of dense-type (see
e.g. \cite[p. 89]{MR0298492}). For more explanations on comparing
these notions see \cite[p. 33]{MR2594359}\emph{ }and\emph{ }\cite[p.\ 662]{astfnc}\emph{.}

Since every $X-$locatable operator is NI, we expect in general that
the localization property depend on a localized NI type condition.

\begin{definition} \label{V-NI-D} \emph{Let $X$ be a LCS. An operator
$T:X\rightrightarrows X^{*}$ is of }negative-infimum type on\emph{
$V\subset X$ or simply $V-$NI if $V\times X^{*}\subset[\varphi_{T|_{V}}\ge c]$.
Equivalently, $T$ is $V-$NI iff $[\varphi_{T|_{V}}<c]\cap V\times X^{*}=\emptyset$
iff $\operatorname*{Pr}\!\!\,_{X}[\varphi_{T|_{V}}<c]\cap V=\emptyset$. }

\emph{The operator $T:X\rightrightarrows X^{*}$ is called }locally-NI\emph{
if, for every open convex $V\subset X$ such that $V\cap D(T)\neq\emptyset$,
$T$ is $V-$NI. }\end{definition}

Note that every operator is $\emptyset-$NI and if, for a certain
$V\neq\emptyset$, $T$ is $V-$NI then $V\cap D(T)\neq\emptyset$,
because $V\cap D(T)=\emptyset$ implies $\varphi_{T|_{V}}=-\infty$.
Notice also that if $T$ is $V-$NI then $V\subset\operatorname*{Pr}_{X}[\varphi_{T|_{V}}\ge c]$
while the converse is not true. The $X-$NI type coincides with the
NI type discussed above.

\begin{theorem} \label{V-NI-T}Let $X$ be a LCS, let $T:X\rightrightarrows X^{*}$,
and let $V\subset X$ be such that $V\cap D(T)\neq\emptyset$. The
following are equivalent

\medskip

\begin{tabular}{ll}
\emph{(i)} $T$ is $V-$NI, & \emph{(iv)} $[\varphi_{T|_{V}}\le c]\cap V\times X^{*}\subset[\varphi_{T|_{V}}\ge c]$,\tabularnewline
\emph{(ii)} $\varphi_{T|_{V}}\ge c$ in $V\times X^{*}$, & \emph{(v)} $[\varphi_{T|_{V}}<c]\cap V\times X^{*}\subset[\varphi_{T|_{V}}\ge c]$.\tabularnewline
\emph{(iii)} $\varphi_{T|_{V}}+\iota_{V\times X^{*}}\ge c$, & \tabularnewline
\end{tabular} \end{theorem}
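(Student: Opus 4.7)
The plan is to observe that all five conditions are essentially reformulations of the single set-theoretic statement $[\varphi_{T|_{V}}<c]\cap V\times X^{*}=\emptyset$ appearing in Definition~\ref{V-NI-D}, and to chain the equivalences so that minimal work is needed. The main observation is that $c$ is finite (indeed continuous) on all of $Z$, so inequalities involving $c$ are never voided by infinities on the right-hand side; the only ``extra'' value that intervenes is $+\infty$ coming from the indicator function in (iii).

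First I would dispatch (i)$\Leftrightarrow$(ii)$\Leftrightarrow$(iii) as immediate translations: (i) is literally ``$\varphi_{T|_{V}}(z)\ge c(z)$ for every $z\in V\times X^{*}$'', which is (ii); and since $\iota_{V\times X^{*}}(z)=0$ on $V\times X^{*}$ and $+\infty$ off $V\times X^{*}$ while $c$ is real-valued everywhere, the inequality $\varphi_{T|_{V}}+\iota_{V\times X^{*}}\ge c$ holds trivially off $V\times X^{*}$ and reduces to (ii) on $V\times X^{*}$, giving (iii).

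Next, for the equivalence with (iv) and (v), the forward directions (i)$\Rightarrow$(iv) and (i)$\Rightarrow$(v) are free: under (i) the set $[\varphi_{T|_{V}}\le c]\cap V\times X^{*}$ coincides with $[\varphi_{T|_{V}}=c]\cap V\times X^{*}\subset[\varphi_{T|_{V}}\ge c]$, and $[\varphi_{T|_{V}}<c]\cap V\times X^{*}$ is even empty, hence vacuously contained in $[\varphi_{T|_{V}}\ge c]$. For the reverse implications (iv)$\Rightarrow$(i) and (v)$\Rightarrow$(i) I would argue by contradiction: suppose some $z_{0}\in V\times X^{*}$ satisfies $\varphi_{T|_{V}}(z_{0})<c(z_{0})$. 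Then $z_{0}$ lies in both $[\varphi_{T|_{V}}\le c]\cap V\times X^{*}$ and $[\varphi_{T|_{V}}<c]\cap V\times X^{*}$, so by either (iv) or (v) we would get $\varphi_{T|_{V}}(z_{0})\ge c(z_{0})$, contradicting $\varphi_{T|_{V}}(z_{0})<c(z_{0})$ (recall $c(z_{0})\in\mathbb{R}$). Thus no such $z_{0}$ exists and (i) holds.

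There is no genuine obstacle here; the only point requiring any care is the handling of the extended-real arithmetic in passing between (ii) and (iii), which is resolved by noting that $c$ takes only finite values on $Z$, so the ``off $V\times X^{*}$'' contribution to (iii) is automatic. The non-emptiness assumption $V\cap D(T)\ne\emptyset$ is used only to rule out the degenerate situation $\varphi_{T|_{V}}\equiv-\infty$; it is not actually needed inside the chain of equivalences since all five conditions fail simultaneously in that degenerate case, but stating it matches the definition of $V$-NI and keeps the statement meaningful.
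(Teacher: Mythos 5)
Your proof is correct: the paper itself leaves this verification to the reader, and your argument is precisely the routine unpacking intended, with the right attention to the two minor points (finiteness of $c$, and $\varphi_{T|_{V}}>-\infty$ under $V\cap D(T)\neq\emptyset$ so that the sum in (iii) is unambiguous). Nothing further is needed.
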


\begin{proof} Left to the reader. \end{proof} 

\begin{theorem} \label{convex closure} Let $X$ be a LCS and let
$T\in\mathcal{M}(X)$. Assume that
\begin{equation}
V{\rm \ open\ convex},\ V\cap D(T)\neq\emptyset\ \Longrightarrow\ \operatorname*{Pr}\,\!\!_{X}[\varphi_{T|_{V}}<c]\cap V\subset\overline{D(T)}.\label{C}
\end{equation}
Then $\overline{D(T)}$ is convex. 

In particular, if $T\in\mathcal{M}(X)$ is locatable (in $\overline{D(T)}$)
or $T\in\mathcal{M}(X)$ is locally-NI then $\overline{D(T)}$ convex.
\end{theorem}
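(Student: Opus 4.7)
The plan is to prove the contrapositive: assuming $\overline{D(T)}$ is not convex, I will exhibit an open convex $V$ with $V\cap D(T)\neq\emptyset$ and a pair $(p, y^*) \in V\times X^*$ satisfying $\varphi_{T|_V}(p, y^*) < c(p, y^*)$ with $p\notin\overline{D(T)}$, directly contradicting hypothesis \eqref{C}.

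First, from non-convexity of $\overline{D(T)}$ I pick $x_1, x_2 \in \overline{D(T)}$ and $\lambda\in (0,1)$ with $\lambda x_1 + (1-\lambda) x_2\notin\overline{D(T)}$. Using density of $D(T)$ in $\overline{D(T)}$, joint continuity of convex combinations, and openness of $X\setminus\overline{D(T)}$, I replace $x_1, x_2$ by nearby $u_1, u_2\in D(T)$ so that $p := \lambda u_1 + (1-\lambda) u_2$ still lies in $X\setminus\overline{D(T)}$. Choose a balanced convex open $0$-neighborhood $W$ with $(p+W)\cap\overline{D(T)} = \emptyset$, which yields the uniform separation $u-p\notin W$ for every $u\in D(T)$.

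Next, I build $V$ open convex containing $p$ and $u_1$ (so $V\cap D(T)\ni u_1$), chosen small enough in a suitable gauge that the argument below goes through. The crux is producing $y^*\in X^*$ and $\varepsilon > 0$ with
\[
\langle u - p,\; u^* - y^*\rangle \geq \varepsilon \qquad \forall\, (u, u^*) \in T|_V,
\]
equivalently $\varphi_{T|_V}(p, y^*) \leq c(p, y^*) - \varepsilon$, placing $(p, y^*)$ in $[\varphi_{T|_V} < c]\cap(V\times X^*)$. I expect to obtain $y^*$ in two stages: (i) a Debrunner--Flor / KKM-style variational argument on a compact convex $K\subset V$ containing $p$, producing $y_0^*\in X^*$ with $\langle u - p,\, u^* - y_0^*\rangle \geq 0$ for every $(u, u^*)\in T|_K$; (ii) a perturbation $y^* := y_0^* + t\xi^*$ where $\xi^*\in X^*$ separates $W$ from $(V\cap D(T)) - p$, and $t > 0$ is small; the uniform separation from $W$ upgrades the non-strict inequality to a uniform $\varepsilon > 0$.

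The principal obstacle is adapting the Banach-space mechanics of the Debrunner--Flor extension and the strict-separation perturbation to the locally convex setting, where $T|_V$ may lack a norm-bounded range: compactness must be handled via gauges of suitable $0$-neighborhoods, and the choices of $V$ and $K$ must be coordinated. Once the main implication is proved, the ``in particular'' statements follow at once: locatability in $\overline{D(T)}$ is \eqref{C} with ``$<$'' strengthened to ``$\leq$'' and therefore implies \eqref{C}; and locally-NI means $[\varphi_{T|_V} < c]\cap(V\times X^*) = \emptyset$ for every admissible $V$, so \eqref{C} holds vacuously.
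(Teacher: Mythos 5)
Your overall strategy (contrapositive, produce an open convex $V$ meeting $D(T)$ and a point $(p,y^*)\in V\times X^*$ with $\varphi_{T|_V}(p,y^*)<c(p,y^*)$ and $p\notin\overline{D(T)}$) is exactly the paper's, and your reduction of the ``in particular'' claims to \eqref{C} is correct. But the heart of the argument --- actually producing $y^*$ and $V$ --- is not carried out, and the mechanism you propose does not work. A Debrunner--Flor/KKM argument on a compact convex $K\subset V$ goes the wrong way: it produces, for a \emph{given} $x^*$, a point $x$ in a compact convex set containing the domain (or, dually, requires a $w^*$-compact convex set containing the \emph{range}); for $T|_K$ the range can be unbounded even when $K$ is compact, so no $y_0^*$ with $\langle u-p,u^*-y_0^*\rangle\ge 0$ for all $(u,u^*)\in T|_K$ is guaranteed. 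Even granting such a $y_0^*$, it controls only $T|_K$, whereas you need a uniform strict inequality over all of $T|_V$; your perturbation step does not bridge that gap, and you yourself flag this as the ``principal obstacle.'' As written, the proposal is a plan with its central step missing.

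The paper closes this gap with an elementary, quantitative construction that needs no compactness and no extension theorem. Take $x_0,x_1\in D(T)$ with $x_\rho:=\rho x_1+(1-\rho)x_0$ satisfying $(x_\rho+U)\cap D(T)=\emptyset$, fix $z_i=(x_i,x_i^*)\in T$, choose a balanced $V_0\in\mathscr V(0)$ and $\gamma>0$ with $V_0+V_0\subset U$ and $x_1-x_0\in\gamma V_0$, and pick $y^*$ by Hahn--Banach with $\langle x_0-x_1,y^*\rangle\ge\gamma(\rho(1-\rho)c(z_1-z_0)+1)$. The test set is $V:=[x_1,x_\rho]+\epsilon W$ (open, convex, contains $x_1\in D(T)$ and $x_\rho$), and the test point is $\bar z_\rho=z_\rho+(0,y^*)$. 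The geometric key is that any $x\in D(T)\cap V$ must sit at segment parameter $\lambda\ge\rho+1/\gamma$, because $x-x_\rho\notin U$ while $x-x_\lambda\in\epsilon W$; monotonicity against the two fixed points $z_0,z_1$ alone then gives $c(\bar z_\rho-z)\ge 1-\alpha\epsilon>0$ uniformly over $z\in T|_V$, i.e.\ $\bar z_\rho\in[\varphi_{T|_V}<c]\cap V\times X^*$, contradicting \eqref{C}. If you want to complete your proof, replace the Debrunner--Flor step with this explicit choice of $y^*$ and of the tube $[x_1,x_\rho]+\epsilon W$.
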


\begin{proof} Assume that $\overline{D(T)}$ is not convex. There
exist $x_{0},x_{1}\in D(T)$, $0<\rho<1$, and $U\in\mathscr{V}(0)$
such that $(x_{\rho}+U)\cap D(T)=\emptyset$, where $x_{t}:=tx_{1}+(1-t)x_{0}$,
$0\le t\le1$; in particular $x_{0}\neq x_{1}$. Let $x_{0}^{*}\in Tx_{0}$,
$x_{1}^{*}\in Tx_{1}$, and denote by $z_{0}=(x_{0},x_{0}^{*})$,
$z_{1}=(x_{1},x_{1}^{*})$, $z_{\rho}:=\rho z_{1}+(1-\rho)z_{0}$. 

Let $V\in\mathscr{V}(0)$ be balanced and $\gamma>0$ be such that
$V+V\subset U$ and $x_{1}-x_{0}\in\gamma V$. Take $y^{*}\in X^{*}$
with $\langle x_{0}-x_{1},y^{*}\rangle\ge\gamma(\rho(1-\rho)c(z_{1}-z_{0})+1)>0$
and $W\in\mathscr{V}(0)$ be open convex such that $W\subset V$ and
$\alpha:=\sup\{|\langle u,y^{*}\rangle|\mid u\in W\}$ is finite and
positive (see e.g. \cite[Theorem\ 1.18,\ p. 15]{MR1157815}). Let
$0<\epsilon<\min\{1,1/\alpha\}$. Let $\bar{z}_{\rho}:=z_{\rho}+(0,y^{*})$.

For every $z=(x,x^{*})\in T$ with $x\in D(T)\cap([x_{1},x_{\rho}]+\epsilon W)$
we have $x-x_{\rho}\not\in U$ and $x-x_{\lambda}\in\epsilon W$ for
some $1\ge\lambda>\rho$. This yields that $x_{\lambda}-x_{\rho}=(\lambda-\rho)(x_{1}-x_{0})\not\in V$.
Hence $(\lambda-\rho)\ge1/\gamma$. Since $x_{\rho}-x=(\lambda-\rho)(x_{0}-x_{1})+x_{\lambda}-x$
and $T\in\mathcal{M}(X)$ we have 
\[
c(\bar{z}_{\rho}-z)=c(z_{\rho}-z)+\langle x_{\rho}-x,y^{*}\rangle=\rho c(z_{1}-z)+(1-\rho)c(z_{0}-z)-\rho(1-\rho)c(z_{1}-z_{0})+\langle x_{\rho}-x,y^{*}\rangle
\]
\[
\ge(\lambda-\rho)\langle x_{0}-x_{1},y^{*}\rangle+\langle x_{\lambda}-x,y^{*}\rangle-\rho(1-\rho)c(z_{1}-z_{0})\ge\tfrac{1}{\gamma}\langle x_{0}-x_{1},y^{*}\rangle-\alpha\epsilon-\rho(1-\rho)c(z_{1}-z_{0})>0.
\]
This yields $\bar{z}_{\rho}\in[\varphi_{T|_{[x_{1},x_{\rho}]+\epsilon W}}<c]\cap([x_{1},x_{\rho}]+\epsilon W)\subset\overline{D(T)}\times X^{*}$
and the contradiction $x_{\rho}\in\overline{D(T)}$. \end{proof}

\begin{remark}[$T$ is $V-$NI versus $T|_V$ is NI] \emph{First note
that $T$ is $V-$NI whenever $V\subset D(T)$ since in this case
$V\times X^{*}=D(T|_{V})\times X^{*}\subset[\varphi_{T|_{V}}\ge c]$
(see Theorem \ref{propr}  (i)) or because, in this case, $V$ locates
$T$ (see Theorem \ref{V-locate} below). Also, it is straightforward
that $T$ is $V-$NI whenever $T|_{V}$ is $(X-)$NI. The converse
of this fact, namely, whether $T|_{V}$ is NI whenever $T$ is $V-$NI
fails to be true in any LCS $X$ even when $T$ is maximal monotone,
$V$ is convex, and $V$ is open or closed with empty or non-empty
interior.}

\emph{We base our following examples on the fact that every monotone
NI operator admits a unique maximal monotone extension (see Theorem
\ref{VNIR} below or \cite[Proposition 4 (iii)]{MR2594359}).}

\emph{For example, for every $T\in\mathcal{M}(X)$ with a non-singleton
domain and for every $x\in D(T)$, $T$ is $\{x\}-$NI while $T|_{\{x\}}=\{x\}\times Tx$
is not NI because $\{x\}\times X$ and any maximal monotone extension
of $T$ are different maximal monotone extensions of $T|_{\{x\}}$.}

\emph{Similar considerations can be made for a non-NI operator $T$
which is $D(T)-$NI but $T|_{D(T)}=T$ is not NI. }

\emph{Let $C\varsubsetneq X$ be closed convex with $\operatorname*{int}C\neq\emptyset$.
Then $N_{C}$ is $\operatorname*{int}C-$NI (this fact can also be
checked directly from $N_{C}|_{\operatorname*{int}C}=\operatorname*{int}C\times\{0\}$
and $\varphi_{\operatorname*{int}C\times\{0\}}(x,x^{*})=\sigma_{C}(x^{*})$,
$(x,x^{*})\in X\times X^{*}$). But $N_{C}|_{\operatorname*{int}C}$
is not NI since it admits two distinct (maximal) monotone extensions:
$N_{C}$ and $X\times\{0\}$. Similarly, for every closed convex set
$D\subset\operatorname*{int}C$ (with possible empty interior) we
have that $N_{C}$ is $D-$NI and $N_{C}|_{D}$ is not NI because
$N_{C}|_{\operatorname*{int}C}$ is not NI.} \end{remark} 

\begin{remark}[The NI method] \emph{\label{NI method} In general
it is hard to verify the NI condition directly, even when $T$ is
monotone, since the closed forms of $\varphi_{T}$, $\psi_{T}$ are
known only for few types of operators (see e.g. \cite{MR2291550,MR2594359})
and, when $X$ is a non-reflexive Banach space, the coupling $c$
is not continuous with respect to any topology on $X\times X^{*}$
compatible with the natural duality $(X\times X^{*},X^{*}\times X)$
(see \cite[Appendix]{MR2577332}).}

\emph{Given a LCS $X$, the first direct method to prove that an operator
$T:X\rightrightarrows X^{*}$ is of NI type has been developed in
\cite[Theorem.\ 1.1]{MR2207807} and is summarized as follows
\begin{equation}
(z=(x,x^{*})\ {\rm is\ m.r.\ to}\ T\Rightarrow x\in D(T))\Longrightarrow T\ {\rm is\ NI},\label{NI-method}
\end{equation}
or, equivalently, $X$ locates $T$ $\Longrightarrow T\ {\rm is\ NI}$.
The reader recognizes that this NI method is contained in Lemma \ref{X-locate}
and that its converse holds under the additional condition $\operatorname*{Pr}_{X}[\varphi_{T}=c]\subset D(T)$.}

\emph{The following is a slightly improved version of (\ref{NI-method}),
namely 
\begin{equation}
(\operatorname*{Pr}\!\!\,_{X}[\varphi_{T}<c]\subset D(T))\Longrightarrow T\ {\rm is\ NI}.\label{NI-met-v2}
\end{equation}
Indeed, if $z=(x,x^{*})\in[\varphi_{T}<c]$ then $x\in\operatorname*{Pr}\!\!\,_{X}[\varphi_{T}<c]\subset D(T)$;
whence, according to Theorem \ref{propr} (i), $z\in[\varphi_{T}\ge c]$
a contradiction. Therefore $[\varphi_{T}<c]$ is empty, that is, $T$
is NI.} \end{remark}

Similar considerations for a $V-$NI method are contained in the following
result.

\begin{theorem}[The $V-$NI method]\label{V-NI method} Let $X$ be
a LCS, let $T:X\rightrightarrows X^{*}$, and let $V\subset X$. Then
$T$ is $V-$NI iff $\operatorname*{Pr}_{X}[\varphi_{T|_{V}}<c]\cap V\subset D(T)$.
\end{theorem}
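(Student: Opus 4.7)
The plan is to deduce both implications by unpacking the definition of $V$-NI from Definition \ref{V-NI-D} and invoking only part (i) of Theorem \ref{propr} applied to the restricted operator $T|_V$.

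The forward direction is essentially tautological. By Definition \ref{V-NI-D}, $T$ is $V$-NI exactly when $\operatorname*{Pr}_X[\varphi_{T|_V}<c]\cap V=\emptyset$. The empty set is a subset of $D(T)$, so the inclusion $\operatorname*{Pr}_X[\varphi_{T|_V}<c]\cap V\subset D(T)$ follows immediately. I would dispose of this in a single line.

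For the reverse direction I would argue by contradiction. Assume $\operatorname*{Pr}_X[\varphi_{T|_V}<c]\cap V\subset D(T)$ and suppose, toward a contradiction, that there exists $x\in\operatorname*{Pr}_X[\varphi_{T|_V}<c]\cap V$, witnessed by some $x^*\in X^*$ with $\varphi_{T|_V}(x,x^*)<c(x,x^*)$. By hypothesis $x\in D(T)$, and since $x\in V$ as well, we have $x\in D(T)\cap V = D(T|_V)$. Now apply Theorem \ref{propr} (i) to the operator $T|_V$: this gives $D(T|_V)\times X^*\subset[\varphi_{T|_V}\ge c]$, so $(x,x^*)\in[\varphi_{T|_V}\ge c]$, contradicting $\varphi_{T|_V}(x,x^*)<c(x,x^*)$. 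Hence $\operatorname*{Pr}_X[\varphi_{T|_V}<c]\cap V=\emptyset$, and by Definition \ref{V-NI-D} $T$ is $V$-NI.

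There is no real obstacle here; the only slightly subtle point is recognizing that Theorem \ref{propr} (i) must be applied to $T|_V$ rather than to $T$ itself, which is precisely what makes the condition ``$x\in D(T)$'' on the right-hand side sufficient (rather than the seemingly weaker $x\in D(T|_V)$): membership of $x$ in $V$ is already built into the intersection with $V$ on the left, and combining it with $x\in D(T)$ puts $x$ into $D(T|_V)$ automatically.
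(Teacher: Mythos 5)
Your proof is correct and is essentially the paper's own argument: the paper's one-line proof likewise observes that $\operatorname*{Pr}_{X}[\varphi_{T|_{V}}<c]\cap V\cap D(T)=\emptyset$ because $D(T|_{V})\times X^{*}=(D(T)\cap V)\times X^{*}\subset[\varphi_{T|_{V}}\ge c]$, which is exactly your application of Theorem \ref{propr} (i) to $T|_{V}$. You have merely unpacked the same reasoning into the two implications explicitly.
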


\begin{proof} It suffices to note that, in general, $\operatorname*{Pr}_{X}[\varphi_{T|_{V}}<c]\cap V\cap D(T)=\emptyset$
due to $D(T|_{V})\times X^{*}=(D(T)\cap V)\times X^{*}\subset[\varphi_{T|_{V}}\ge c]$.
\end{proof}

\begin{theorem} \label{V-locate}Let $X$ be a LCS, let $T:X\rightrightarrows X^{*}$,
and let $V\subset X$ be such that $V\cap D(T)\neq\emptyset$. The
following are equivalent

\medskip

\emph{(i)} $V$ locates $T$,

\medskip

\emph{(ii)} $T$ is $V-$NI and $\operatorname*{Pr}_{X}[\varphi_{T|_{V}}=c]\cap V\subset D(T)$,

\medskip

\emph{(iii)} $\operatorname*{Pr}_{X}\operatorname*{dom}\varphi_{T|_{V}}\cap V\subset\operatorname*{Pr}_{X}([\varphi_{T|_{V}}\ge c]\cap\operatorname*{dom}\varphi_{T|_{V}})$
and $\operatorname*{Pr}_{X}[\varphi_{T|_{V}}=c]\cap V\subset D(T)$.

\medskip

If, in addition, $T|_{V}\in\mathcal{M}(X)$ then $V$ locates $T$
iff $T$ is $V-$NI and $\operatorname*{Pr}_{X}[\varphi_{T|_{V}}=c]\cap V=D(T)\cap V$.

\end{theorem}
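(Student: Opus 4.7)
The plan is to establish (i) $\Leftrightarrow$ (ii) directly by splitting $[\varphi_{T|_V}\le c]$ into its strict and equality parts via Theorem \ref{propr}(i), and then to reduce (ii) $\Leftrightarrow$ (iii) to the equivalence of $V$-NI with the first inclusion in (iii), obtained through a one-dimensional convex continuity / intermediate-value argument along a segment in $X^{\ast}$.

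For (i) $\Leftrightarrow$ (ii): If $V$ locates $T$, the second inclusion in (ii) is immediate from $\operatorname{Pr}_X[\varphi_{T|_V}=c]\cap V \subset \operatorname{Pr}_X[\varphi_{T|_V}\le c]\cap V \subset D(T)$; for $V$-NI, any $(x,x^{\ast})\in V\times X^{\ast}$ with $\varphi_{T|_V}(x,x^{\ast}) < c(x,x^{\ast})$ lies in $[\varphi_{T|_V}\le c]\cap V\times X^{\ast}\subset D(T)\times X^{\ast}$, so $x\in D(T)\cap V = D(T|_V)$, and Theorem \ref{propr}(i) forces $(x,x^{\ast})\in[\varphi_{T|_V}\ge c]$, a contradiction. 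Conversely, $V$-NI turns $[\varphi_{T|_V}\le c]\cap V\times X^{\ast}$ into $[\varphi_{T|_V}=c]\cap V\times X^{\ast}$, which projects into $D(T)$ by the second part of (ii).

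For (ii) $\Leftrightarrow$ (iii): the second inclusions coincide and $V$-NI trivially yields the first inclusion of (iii). For the converse, suppose (iii) holds and $\varphi_{T|_V}(x_0,x_0^{\ast}) < c(x_0,x_0^{\ast})$ for some $(x_0,x_0^{\ast})\in V\times X^{\ast}$. Then $x_0\in\operatorname{Pr}_X\operatorname{dom}\varphi_{T|_V}\cap V$, so the first part of (iii) yields $y^{\ast}$ with $\varphi_{T|_V}(x_0,y^{\ast})\ge c(x_0,y^{\ast})$ and $\varphi_{T|_V}(x_0,y^{\ast}) < \infty$. The map $f(t):=\varphi_{T|_V}(x_0,(1-t)x_0^{\ast}+t y^{\ast}) - c(x_0,(1-t)x_0^{\ast}+t y^{\ast})$ is convex and $\omega^{\ast}$-lsc on $X^{\ast}$ (since $c(x_0,\cdot)$ is $\omega^{\ast}$-continuous), so its restriction to $[0,1]$ is convex, lsc, and by the convex upper bound $f(t)\le (1-t)f(0)+tf(1)$ it is finite throughout. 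A convex lsc function on $[0,1]$ finite everywhere is continuous on $[0,1]$, so from $f(0)<0\le f(1)$ the IVT gives $t^{\ast}\in(0,1]$ with $f(t^{\ast})=0$; writing $z^{\ast}:=(1-t^{\ast})x_0^{\ast}+t^{\ast}y^{\ast}$ we obtain $(x_0,z^{\ast})\in[\varphi_{T|_V}=c]\cap V\times X^{\ast}$, hence $x_0\in D(T)\cap V = D(T|_V)$ by the second part of (iii); but then Theorem \ref{propr}(i) forces $\varphi_{T|_V}(x_0,x_0^{\ast})\ge c(x_0,x_0^{\ast})$, a contradiction. Hence $T$ is $V$-NI.

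For the addendum when $T|_V\in\mathcal{M}(X)$, Theorem \ref{propr}(ii) gives $T|_V\subset[\varphi_{T|_V}=c]$, so $D(T)\cap V = D(T|_V)\subset\operatorname{Pr}_X[\varphi_{T|_V}=c]\cap V$; combined with the reverse inclusion from the second part of (ii), this upgrades to the equality $\operatorname{Pr}_X[\varphi_{T|_V}=c]\cap V = D(T)\cap V$. The main non-routine step of the argument is the convex-continuity fact underpinning the IVT, namely that a convex lsc function on $[0,1]$ finite at both endpoints is automatically continuous at those endpoints; this is where lower semi-continuity (of $\varphi_{T|_V}$ in the $\tau\times\omega^{\ast}$ topology) is needed in an essential way.
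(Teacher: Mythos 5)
Your proof is correct and follows essentially the same route as the paper: the splitting of $[\varphi_{T|_{V}}\le c]$ via Theorem \ref{propr}(i) and the intermediate-value argument along a segment in the fibre $\{x_{0}\}\times X^{*}$ are exactly the paper's devices, the only difference being that you place the IVT step in (iii) $\Rightarrow$ (ii) rather than closing the cycle with (iii) $\Rightarrow$ (i). Your explicit justification of endpoint continuity of $f$ via lower semicontinuity plus the convexity upper bound is a detail the paper leaves implicit, and it is correctly handled.
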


\begin{proof} (i) $\Rightarrow$ (ii) Recall that $V$ locates $T$
means $[\varphi_{T|_{V}}\le c]\cap V\times X^{*}\subset D(T)\times X^{*}$
from which $[\varphi_{T|_{V}}\le c]\cap V\times X^{*}\subset D(T|_{V})\times X^{*}\subset[\varphi_{T|_{V}}\ge c]$;
whence $T$ is $V-$NI and $\operatorname*{Pr}_{X}[\varphi_{T|_{V}}\le c]\cap V=\operatorname*{Pr}_{X}[\varphi_{T|_{V}}=c]\cap V\subset D(T)$.

(ii) $\Rightarrow$ (iii) Because $T$ is $V-$NI we have $V\times X^{*}\subset[\varphi_{T|_{V}}\ge c]$
followed by $\operatorname*{dom}\varphi_{T|_{V}}\cap V\times X^{*}\subset\operatorname*{dom}\varphi_{T|_{V}}\cap[\varphi_{T|_{V}}\ge c]$.

(iii) $\Rightarrow$ (i) Let $x\in\operatorname*{Pr}_{X}[\varphi_{T|_{V}}\le c]\cap V$.
Take $x_{1}^{*}\in X^{*}$ such that $(x,x_{1}^{*})\in[\varphi_{T|_{V}}\le c]\subset\operatorname*{dom}\varphi_{T|_{V}}$.
Then $x\in\operatorname*{Pr}_{X}(\operatorname*{dom}\varphi_{T|_{V}})\cap V\subset\operatorname*{Pr}_{X}([\varphi_{T|_{V}}\ge c]\cap\operatorname*{dom}\varphi_{T|_{V}})$,
so $(x,x_{2}^{*})\in[\varphi_{T|_{V}}\ge c]\cap\operatorname*{dom}\varphi_{T|_{V}}$
for some $x_{2}^{*}\in X^{*}$. The function $f:[0,1]\rightarrow\mathbb{R},$
$f(t)=(\varphi_{T|_{V}}-c)(x,tx_{1}^{*}+(1-t)x_{2}^{*})$ is continuous
and $f(0)\ge0$, $f(1)\le0$. Therefore there is $s\in[0,1]$ such
that $f(s)=0$, that is, $(x,sx_{1}^{*}+(1-s)x_{2}^{*})\in[\varphi_{T|_{V}}=c]$.
Therefore $x\in\operatorname*{Pr}_{X}[\varphi_{T|_{V}}=c]\cap V\subset D(T)$
and so $V$ locates $T$. 

If in addition $T|_{V}\in\mathcal{M}(X)$ then $T|_{V}\subset[\varphi_{T|_{V}}=c]$,
$D(T)\cap V=D(T|_{V})\subset\operatorname*{Pr}_{X}[\varphi_{T|_{V}}=c]$,
and the last part of the conclusion follows from (i) $\Leftrightarrow$
(ii). \end{proof}

\strut

In Theorem \ref{V-locate} we saw that $T$ being of $V-$NI type
is an important part of $V$ locating $T$ and as a consequence every
locatable operator is locally-NI. As previously stated, the other
condition in Theorem \ref{V-locate}, namely, $\operatorname*{Pr}_{X}[\varphi_{T|_{V}}=c]\cap V\subset D(T)$
is hard to verify directly due to the unwieldy nature of $\varphi_{T}$.
Fortunately, this latter condition can be replaced by representability. 

\begin{definition} \emph{Let $(X,\tau)$ be a LCS. An operator }$T:X\rightrightarrows X^{*}$\emph{
is }representable\emph{ }in\emph{ $V\subset X$ or $V-$}representable\emph{
if $V\cap D(T)\ne\emptyset$ and there is $h\in\mathscr{R}$ (that
is, $h\ge c$ and $h\in\Gamma_{\tau\times w^{*}}(X\times X^{*})$)
such that $[h=c]\cap V\times X^{*}=\operatorname*{Graph}(T|_{V})$.
The function $h$ is called a $V-$}representative\emph{ }of\emph{
$T$. The class of }$V-$\emph{representatives} \emph{of} $T$\emph{
is denoted by $\mathscr{{R}}_{T}^{V}$.} \end{definition} 

As previously seen, the condition $V\cap D(T)\neq\emptyset$ can be
avoided but its presence makes the previous definition meaningful. 

An $X-$representable operator $T:X\rightrightarrows X^{*}$ is simply
called \emph{representable} and the class of its representatives is
denoted by $\mathscr{{R}}_{T}$, notion that was first considered
in this form in \cite{MR2207807}. For properties of representable
operators see \cite{MR2389004,MR2453098,MR2583911,MR2594359,MR2577332}.

In other words, $T$ is $V-$representable if $T|_{V}$ is the trace
of the representable operator $[h=c]$ on $V\times X^{*}$, where
$h\in\mathscr{{R}}$. 

\begin{remark} \emph{\label{reprez}Note that }
\begin{itemize}
\item \emph{$T|_{V}$ is monotone whenever $T$ is $V-$representable since
$[h=c]\in\mathcal{M}(X)$ for every $h\in\mathscr{C}$ (see e.g. \cite[Proposition 4]{MR2086060}
or \cite[Lemma\ 3.1]{MR2453098}); }
\item \emph{$T$ is $W-$representable whenever $T$ is $V-$representable
and $V\supset W$; in this case every $V-$representative is a $W-$representative
of $T$, that is, $\mathscr{{R}}_{T}^{V}\subset\mathscr{{R}}_{T}^{W}$.
In particular, if $T$ is representable then, for every $V\subset X$,
$T$ is $V-$representable. Conversely, if $T$ is $V-$representable
then $T$ need not be representable because we can modify $T$ outside
$V\times X^{*}$. For example, for every $V\varsubsetneq X$, $x\not\in V$,
$T=(X\setminus\{x\})\times\{0\}$ is $V-$representable since $\varphi_{T}=\psi_{T}=\iota_{X\times\{0\}}\in\mathscr{{R}}_{T}^{V}$
and $T$ is not representable because $T$ is not closed or because
$T\varsubsetneq[\psi_{T}=c]$ (see \cite[Theorem\ 2.2]{MR2207807}
or Theorem \ref{V-repres} below); }
\item \emph{If $T|_{V}$ is representable then $T$ is $V-$representable;
in this case every representative of $T|_{V}$ is a $V-$representative
of $T$, i.e., $\mathscr{{R}}_{T|_{V}}\subset\mathscr{{R}}_{T}^{V}$.
Indeed, if $h\in\mathscr{R}$ has $T|_{V}=[h=c]$ then }$V\times X^{*}\cap[h=c]=T|_{V}$.
\emph{Conversely, if $T$ is $V-$representable with $h$ a $V-$representative
of $T$ such that $\operatorname*{Pr}_{X}[h=c]\subset V$ then $T|_{V}$
is representable with representative $h$. In general, without the
additional condition, the converse is not true, in any LCS $X$, even
if we work with $T\in\mathfrak{M}(X)$ and $V$ open convex. Indeed,
take $C\varsubsetneq X$ closed convex with $\operatorname*{int}C\neq\emptyset$,
$T=N_{C}$, $V=\operatorname*{int}C$. Then $T$ is representable,
since it is maximal monotone (see e.g. \cite[Theorem\ 2.3]{MR2207807}
or Theorem \ref{X-amt} below), while $T|_{V}=\operatorname*{int}C\times\{0\}$
is not, for example because $\psi_{T|_{V}}=\iota_{C\times\{0\}}$
and $T|_{V}=\operatorname*{int}C\times\{0\}\subsetneq C\times\{0\}=[\psi_{T|_{V}}=c]$
(see \cite[Theorem 2.2]{MR2207807} or Theorem \ref{V-repres} below);}
\item \emph{However, when $C$ is closed convex, $T$ is $C-$representable
iff $T|_{C}$ is representable. Indeed, if $h$ is a $C-$representative
of $T$ then $h+\iota_{C\times X^{*}}$ is a representative of $T|_{C}$. }
\item \emph{An operator $T\in\mathcal{M}(X)$ is $D(T)-$representable whenever
$D(T)$ identifies $T$. Indeed, let $\overline{T}$ be a representable
extension of $T$ (such as $[\psi_{T}=c]$), and let $h\in\mathscr{R}_{\overline{T}}$,
in particular, $\overline{T}=[h=c]$. Hence $[h=c]\cap D(T)\times X^{*}=\overline{T}|_{D(T)}=T$
because $T$ is maximal monotone in $D(T)\times X^{*}$.}
\end{itemize}
\end{remark}

The following result is a generalization of \cite[Theorem\ 2.2]{MR2207807},
\cite[Theorem\ 1(ii)]{MR2577332}.

\begin{theorem} \label{V-repres}Let $X$ be a LCS, let $T:X\rightrightarrows X^{*}$,
and let $V\subset X$ be such that $V\cap D(T)\neq\emptyset$. The
following are equivalent

\medskip

\emph{(i)} $T$ is $V-$representable,

\medskip

\emph{(ii)} $T|_{V}\in\mathcal{M}(X)$ and $[\psi_{T|_{V}}=c]\cap V\times X^{*}\subset T|_{V}$,

\medskip

\emph{(iii)} $T|_{V}\in\mathcal{M}(X)$ and $[\psi_{T|_{V}}=c]\cap V\times X^{*}=T|_{V}$,

\medskip

\emph{(iv)} $\psi_{T|_{V}}$ is a $V-$representative of $T|_{V}$,
i.e., $\psi_{T|_{V}}\in\mathscr{{R}}_{T|_{V}}^{V}$. \end{theorem}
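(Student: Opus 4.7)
The plan is to establish the cycle (iv) $\Rightarrow$ (i) $\Rightarrow$ (iii) $\Rightarrow$ (ii) $\Rightarrow$ (iv). Two of these links are essentially free: (iv) $\Rightarrow$ (i) is by definition (take $h=\psi_{T|_V}$ as the $V$-representative), and (iii) $\Rightarrow$ (ii) is vacuous.

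For (ii) $\Rightarrow$ (iv), I would pass from $T|_V\in\mathcal{M}(X)$ to $\psi_{T|_V}\ge c$ via Theorem~\ref{propr}(ii). Since $\psi_{T|_V}$ is $\tau\times\omega^{\ast}$-lsc and convex by construction, this already places $\psi_{T|_V}\in\mathscr{R}$. The pointwise bound $\psi_{T|_V}\le c_{T|_V}$ forces $T|_V\subset[\psi_{T|_V}=c]$, and combining this with $T|_V\subset V\times X^{\ast}$ and the inclusion assumed in (ii) yields $[\psi_{T|_V}=c]\cap V\times X^{\ast}=T|_V$, which is exactly the assertion that $\psi_{T|_V}$ is a $V$-representative of $T|_V$.

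The heart of the proof is (i) $\Rightarrow$ (iii). Given $h\in\mathscr{R}_T^V$, I would first observe that $[h=c]$ is monotone because $h\in\mathscr{C}$ (as cited in the paper right before Lemma~\ref{h>c}), so $T|_V=[h=c]\cap V\times X^{\ast}$ is monotone as well. The critical comparison is $h\le\psi_{T|_V}$: indeed $h\le c_{T|_V}$ pointwise, with equality on $T|_V$ and with $c_{T|_V}=+\infty$ off $T|_V$, and since $h$ is convex and $\tau\times\omega^{\ast}$-lsc, passing to the convex hull and then to the $\tau\times\omega^{\ast}$-lsc hull on the right gives $h\le \operatorname{cl}_{\tau\times\omega^{\ast}}(\operatorname{conv} c_{T|_V})=\psi_{T|_V}$. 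For any $z\in[\psi_{T|_V}=c]\cap V\times X^{\ast}$, the chain $c(z)=\psi_{T|_V}(z)\ge h(z)\ge c(z)$ forces $h(z)=c(z)$, so $z\in[h=c]\cap V\times X^{\ast}=T|_V$; the reverse inclusion is the monotonicity bound $T|_V\subset[\psi_{T|_V}=c]$ intersected with $V\times X^{\ast}$.

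The only obstacle, rather minor, is confirming the comparison $h\le\psi_{T|_V}$ despite $\psi$ being defined as a two-step hull ($\operatorname{cl}_{\tau\times\omega^{\ast}}\circ\operatorname{conv}$) rather than as a single joint hull. This is handled by the order-preservation and idempotence of each hull operation on $\Gamma_{\tau\times\omega^{\ast}}\cap\Lambda$: since $h$ is already $\tau\times\omega^{\ast}$-lsc and convex, both operations are inert on $h$, and monotonicity of the hulls in their argument does the rest.
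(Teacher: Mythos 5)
Your proposal is correct and uses exactly the same ingredients as the paper's proof: the comparison $c\le h\le\psi_{T|_V}$ (valid because $h\in\mathscr{R}$ is lsc convex and majorized by $c_{T|_V}$), which yields $[\psi_{T|_V}=c]\subset[h=c]$, together with the monotonicity bound $T|_V\subset[\psi_{T|_V}=c]$. The only difference is the bookkeeping of the implication cycle ((iv)$\Rightarrow$(i)$\Rightarrow$(iii)$\Rightarrow$(ii)$\Rightarrow$(iv) versus the paper's (i)$\Rightarrow$(ii)$\Rightarrow$(iii)$\Rightarrow$(iv)$\Rightarrow$(i)), which is immaterial.
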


\begin{proof} (i) $\Rightarrow$ (ii) Let $h\in\mathscr{R}$ be such
that $[h=c]\cap V\times X^{*}=T|_{V}$. Then $h\le c_{T|_{V}}$ followed
by $c\le h\le\psi_{T|_{V}}$ since $h\in\mathscr{R}$. Therefore $[\psi_{T|_{V}}=c]\subset[h=c]$
and so $[\psi_{T|_{V}}=c]\cap V\times X^{*}\subset T|_{V}$. 

(ii) $\Rightarrow$ (iii) From $T|_{V}\in\mathcal{M}(X)$ we know
that $T|_{V}\subset[\psi_{T|_{V}}=c]$ (see \cite[Proposition\ 3.2 (viii)]{tscr-arxiv,MR2389004}
or \cite[(9)]{MR2577332}). 

For (iii) $\Rightarrow$ (iv) it suffices to notice that $\psi_{T|_{V}}\in\mathscr{R}$,
since $T|_{V}\in\mathcal{M}(X)$ (see Theorem \ref{propr}(ii)).

The implication (iv) $\Rightarrow$ (i) is trivial. \end{proof}

\begin{remark} \emph{In case $V$ is closed convex we have $\operatorname*{dom}\psi_{T|_{V}}\subset V\times X^{*}$
so $[\psi_{T|_{V}}=c]\subset V\times X^{*}$, $[\psi_{T|_{V}}=c]\cap V\times X^{*}=[\psi_{T|_{V}}=c]$,
and Theorem \ref{V-repres} says again that $T$ is $V-$representable
iff $T|_{V}$ is representable.} \end{remark}

\begin{remark} \emph{Given $X$ a LCS, $T:X\rightrightarrows X^{*}$,
and $V\subset X$ such that $T|_{V}\in\mathcal{M}(X)$, the operator
$R:=[\psi_{T|_{V}}=c]\cap V\times X^{*}$ is the smallest $V-$representable
extension of $T|_{V}$ in $V\times X^{*}$. Indeed, for every $h\in\mathscr{R}$
such that $[h=c]\cap V\times X^{*}\supset T|_{V}$ we have $h\le c_{T|_{V}}$,
$c\le h\le\psi_{T|_{V}}$ and so $[\psi_{T|_{V}}=c]\cap V\times X^{*}\subset[h=c]\cap V\times X^{*}$. }

\emph{Also,} $\varphi_{R}=\varphi_{T|_{V}}$, $\psi_{R}=\psi_{T|_{V}}$
\emph{because $T|_{V}\subset R\subset[\psi_{T|_{V}}=c]$ and $\varphi_{T|_{V}}=\varphi_{[\psi_{T|_{V}}=c]}$
(see \cite[Proposition\ 4,\ p.\ 35]{MR2594359}). }\end{remark}

\section{Characterizations}

\begin{theorem} \label{V-amt}Let $X$ be a LCS, let $T:X\rightrightarrows X^{*}$,
and let $V\subset X$ be such that $V\cap D(T)\neq\emptyset$. Consider
the conditions

\medskip

\emph{(i)} $T|_{V}\in\mathcal{M}(X)$ and $V$ identifies $T$,

\medskip

\emph{(ii)} $T$ is $V-$representable and $V$ locates $T$, 

\medskip

\emph{(iii)} $T$ is $V-$representable and $V-$NI. 

\medskip

Then \emph{(i) $\Rightarrow$ (ii) $\Rightarrow$ (iii)}. If, in addition,
$V$ is algebraically open then \emph{(i) $\Leftrightarrow$ (ii)
$\Leftrightarrow$ (iii)}. \end{theorem}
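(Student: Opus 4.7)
The plan is to prove the three implications in order, reserving the substantive work for the last one, which is the one that actually needs algebraic openness.

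For \emph{(i) $\Rightarrow$ (ii)} I would use $\psi_{T|_V}$ itself as a witness to $V$-representability via Theorem \ref{V-repres}(iii). Since $T|_V\in\mathcal{M}(X)$, Theorem \ref{propr}(ii) yields $\psi_{T|_V}\ge c$ so $\psi_{T|_V}\in\mathscr{R}$, and $T|_V\subset[\psi_{T|_V}=c]$ is standard; the reverse inclusion $[\psi_{T|_V}=c]\cap V\times X^{*}\subset T|_V$ follows by first applying Lemma \ref{h>c} to push any $z\in[\psi_{T|_V}=c]$ into $[\varphi_{T|_V}=c]\subset[\varphi_{T|_V}\le c]$, then invoking the identifying hypothesis to land in $T\cap V\times X^{*}=T|_V$. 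The locating property then drops out by projecting $[\varphi_{T|_V}\le c]\cap V\times X^{*}\subset T$ and using the set identity $\operatorname*{Pr}_{X}(M\cap V\times X^{*})=(\operatorname*{Pr}_{X}M)\cap V$ listed before this section.

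For \emph{(ii) $\Rightarrow$ (iii)}, Theorem \ref{V-repres}(ii) already gives $T|_V\in\mathcal{M}(X)$, so only $V$-NI needs checking. Suppose for contradiction that some $z=(x,x^{*})\in V\times X^{*}$ has $\varphi_{T|_V}(z)<c(z)$. Then $V$ locating $T$ forces $x\in D(T)\cap V=D(T|_V)$; but Theorem \ref{propr}(i) asserts $D(T|_V)\times X^{*}\subset[\varphi_{T|_V}\ge c]$, which contradicts $\varphi_{T|_V}(z)<c(z)$.

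The heart of the argument is \emph{(iii) $\Rightarrow$ (i)} under algebraic openness of $V$, which I would carry out as a localized mimic of the proof of Lemma \ref{h>c}. Pick $z=(x,x^{*})\in[\varphi_{T|_V}\le c]\cap V\times X^{*}$; since $T$ is $V$-NI, this upgrades to $\varphi_{T|_V}(z)=c(z)$. For each direction $w\in Z$, algebraic openness of $V$ guarantees some $\epsilon>0$ with $z+tw\in V\times X^{*}$ for all $t\in(0,\epsilon)$, so $V$-NI yields $\varphi_{T|_V}(z+tw)\ge c(z+tw)$ there. Subtracting $\varphi_{T|_V}(z)=c(z)$, dividing by $t>0$, and letting $t\downarrow 0$ produces the one-sided directional inequality $\varphi_{T|_V}'(z;w)\ge z\cdot w$ for every $w\in Z$, i.e.\ $z\in\partial\varphi_{T|_V}(z)$. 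Fenchel--Young equality then gives $\varphi_{T|_V}^{\square}(z)=c(z)$, and since $T|_V\in\mathcal{M}(X)$ makes $\psi_{T|_V}$ proper and $\tau\times\omega^{*}$-lsc convex, the biconjugate formula furnishes $\varphi_{T|_V}^{\square}=\psi_{T|_V}^{\square\square}=\psi_{T|_V}$; hence $\psi_{T|_V}(z)=c(z)$, and by $V$-representability via Theorem \ref{V-repres}(iii) we conclude $z\in T|_V\subset T$. The main obstacle is spotting that algebraic openness of $V$ is exactly what allows the directional-derivative trick from Lemma \ref{h>c} to work with the perturbed point $z+tw$ staying inside $V\times X^{*}$; without it, one only controls directions pointing tangentially into $V$ and the subdifferential inclusion fails to follow.
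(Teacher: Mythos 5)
Your proposal is correct and follows essentially the same route as the paper: the identification-implies-location plus $\psi_{T|_V}\ge\varphi_{T|_V}$ argument for (i)$\Rightarrow$(ii), the domain/NI clash for (ii)$\Rightarrow$(iii), and for (iii)$\Rightarrow$(i) the same directional-derivative computation at $z$ made possible by algebraic openness, followed by Fenchel--Young and $\varphi_{T|_V}^{\square}=\psi_{T|_V}$ to land in $[\psi_{T|_V}=c]\cap V\times X^{*}=T|_V$. The only cosmetic differences are that you re-derive (ii)$\Rightarrow$(iii) directly rather than citing Theorem \ref{V-locate}, and you reach $\psi_{T|_V}\ge\varphi_{T|_V}$ via Lemma \ref{h>c} instead of the cited inequality $\psi_{T|_V}\ge\max\{\varphi_{T|_V},c\}$.
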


\begin{proof} We adapt the proof in \cite[Theorem\ 2.3]{MR2207807}
and we refer to \cite{MR2389004,MR2453098,MR2207807} for other different
arguments. The implication (ii) $\Rightarrow$ (iii) is contained
in Theorem \ref{V-locate}. 

(i) $\Rightarrow$ (ii) Since $V$ identifies it also locates $T$.
Hence, according to Theorem \ref{V-locate}, $T$ is $V-$NI. We know
that $\psi_{T|_{V}}\ge\max\{\varphi_{T|_{V}},c\}$ since $T|_{V}\in\mathcal{M}(X)$
(see \cite[Proposition\ 3.2 (vii)]{tscr-arxiv,MR2389004}). This yields
that $[\psi_{T|_{V}}=c]\cap V\times X^{*}\subset[\varphi_{T|_{V}}=c]\cap V\times X^{*}\subset T$
from which, according to Theorem \ref{V-repres}, it follows that
$T$ is $V-$representable. 

Assume that $V$ is algebraically open, i.e., $V=\operatorname*{core}V$. 

(iii) $\Rightarrow$ (i) Since $T$ is $V-$NI we have $[\varphi_{T|_{V}}\le c]\cap V\times X^{*}=[\varphi_{T|_{V}}=c]\cap V\times X^{*}$
and from $T$ being $V-$representable we know that $T|_{V}\in\mathcal{M}(X)$
so, according to Theorem \ref{propr} (ii), $\psi_{T|_{V}}\ge c$;
whence $[\psi_{T|_{V}}=c]\subset[\varphi_{T|_{V}}=c]$ (see Lemma
\ref{h>c}). This yields $T|_{V}=[\psi_{T|_{V}}=c]\cap V\times X^{*}\subset[\varphi_{T|_{V}}=c]\cap V\times X^{*}$
since $T$ is $V-$representable. To conclude it suffices to show
that 
\[
[\varphi_{T|_{V}}=c]\cap V\times X^{*}\subset[\psi_{T|_{V}}=c]\cap V\times X^{*}.
\]
Let $z\in[\varphi_{T|_{V}}=c]\cap V\times X^{*}$. Because $V$ is
algebraically open, for every $v\in X\times X^{*}$ there is $t_{v}>0$
such that $z+tv\in V\times X^{*}$, for every $0<t<t_{v}$. Hence,
since $T$ is $V-$NI, $\varphi_{T|_{V}}(z+tv)\ge c(z+tv)$, for every
$0<t<t_{v}$. The directional derivative of $\varphi_{T|_{V}}$ at
$z$ in the direction of $v$ satisfies
\[
\forall v\in Z,\ \varphi_{T|_{V}}'(z;v):=\lim_{t\downarrow0}\frac{\varphi_{T|_{V}}(z+tv)-\varphi_{T|_{V}}(z)}{t}\ge\lim_{t\downarrow0}\frac{c(z+tv)-c(z)}{t}=z\cdot v.
\]
This shows that $z\in\partial\varphi_{T|_{V}}(z)$, where ``$\partial$''
is considered under the natural duality $(Z,Z)$. Therefore $\psi_{T|_{V}}(z)+\varphi_{T|_{V}}(z)=z\cdot z=2c(z)$
which, together with $\varphi_{T|_{V}}(z)=c(z)$, implies that $\psi_{T|_{V}}(z)=c(z)$.
\end{proof}

In particular, for $V=X$ we recover the following maximal monotonicity
characterization.

\begin{theorem} \label{X-amt} \emph{(\cite[Theorem\ 2.3]{MR2207807},
\cite[Theorem\ 1 (ii)]{MR2577332})} Let $X$ be a LCS. Then $T\in\mathfrak{M}(X)$
iff $T$ is representable and NI. \end{theorem}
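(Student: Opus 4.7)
The plan is to obtain Theorem \ref{X-amt} as the special case $V = X$ of Theorem \ref{V-amt}. I first observe that $X$ is trivially algebraically open ($\operatorname{core} X = X$) and that, for $V = X$, the operator $T|_V$ equals $T$, so the conditions ``$T$ is $X-$representable'' and ``$T$ is $X-$NI'' reduce precisely to ``$T$ is representable'' and ``$T$ is NI''. Likewise, ``$X$ identifies $T$'' is exactly the statement that the monotone operator $T$ admits no proper monotone extension in $X \times X^{*}$, i.e.\ $T \in \mathfrak{M}(X)$, as already noted in the paragraph following Definition \ref{def-identifiable}.

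For the forward direction, I would start from $T \in \mathfrak{M}(X)$. Then $T \in \mathcal{M}(X)$, so $D(T) \neq \emptyset$ and in particular $X \cap D(T) \neq \emptyset$, so Theorem \ref{V-amt} applies with $V = X$. Maximal monotonicity gives $T = T|_X \in \mathcal{M}(X)$ and, as recalled above, $X$ identifies $T$; that is, condition (i) of Theorem \ref{V-amt} holds. The implication (i) $\Rightarrow$ (iii) there yields that $T$ is both representable and NI.

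Conversely, assuming $T$ is representable and NI, the representability already forces $D(T) \neq \emptyset$ (a representative $h \in \mathscr{R}$ has $[h = c] \neq \emptyset$ by hypothesis), so condition (iii) of Theorem \ref{V-amt} is satisfied for $V = X$. Since $X$ is algebraically open, the full equivalence in Theorem \ref{V-amt} applies and (iii) $\Rightarrow$ (i) gives $T \in \mathcal{M}(X)$ together with $X$ identifying $T$, whence $T \in \mathfrak{M}(X)$.

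There is essentially no obstacle here: the entire content of the theorem is packed into the nontrivial equivalence (iii) $\Rightarrow$ (i) of Theorem \ref{V-amt}, whose proof hinges on the algebraic openness of $V$ to pass from the $V-$NI directional inequality to $z \in \partial \varphi_{T|_V}(z)$ via Lemma \ref{h>c}. Once that machinery is in place, Theorem \ref{X-amt} is merely the instantiation $V = X$, and my only job is to verify that the domain nonemptiness hypothesis of Theorem \ref{V-amt} is satisfied in both directions.
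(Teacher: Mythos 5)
Your proposal is correct and follows exactly the paper's own route: the paper derives Theorem \ref{X-amt} as the instantiation $V=X$ of Theorem \ref{V-amt}, which is precisely what you do, with the added (harmless) care of checking the hypothesis $X\cap D(T)\neq\emptyset$ in both directions. The only cosmetic quibble is that $D(T)\neq\emptyset$ under representability is already built into the definition of $V-$representability (which requires $V\cap D(T)\neq\emptyset$) rather than being deduced from the existence of a representative.
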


\begin{theorem} \label{identifiable} Let $X$ be a LCS, let $T:X\rightrightarrows X^{*}$,
and let $\mathcal{V}$ be a class of algebraically open subsets of
$X$ such that $X\in\mathcal{V}$. Then $T\in\mathcal{M}(X)$ and
$T$ is identified by every $V\in\mathcal{V}$ iff $T$ is representable
and, for every $V\in\mathcal{V}$, $T$ is $V-$NI. \end{theorem}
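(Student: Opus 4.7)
The plan is to reduce both implications to applications of Theorem \ref{V-amt}, using the assumption $X\in\mathcal{V}$ together with Theorem \ref{X-amt} to transit between the pointwise (per-$V$) properties and the global representability/maximality.

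For the forward direction, assume $T\in\mathcal{M}(X)$ and every $V\in\mathcal{V}$ identifies $T$. Specializing to $V=X$ (available because $X\in\mathcal{V}$), the remark following Definition \ref{def-identifiable} upgrades $T$ to $\mathfrak{M}(X)$, whereupon Theorem \ref{X-amt} gives that $T$ is representable. For a general $V\in\mathcal{V}$, identification by $V$ forces $V\cap D(T)\neq\emptyset$ whenever $V\neq\emptyset$, so $T|_V\in\mathcal{M}(X)$, and the (unconditional) implication (i)$\Rightarrow$(iii) of Theorem \ref{V-amt} yields that $T$ is $V-$NI.

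Conversely, assume $T$ is representable with $T$ $V-$NI for every $V\in\mathcal{V}$. Representability writes $T=[h=c]$ for some $h\in\mathscr{R}$, so $T\in\mathcal{M}(X)$; combining this with $T$ being $X-$NI (using $X\in\mathcal{V}$), Theorem \ref{X-amt} again promotes $T$ to $\mathfrak{M}(X)$. The essential transport, contained in the second bullet of Remark \ref{reprez}, is that any $X-$representative of $T$ is automatically a $V-$representative, so $T$ is $V-$representable for every $V\in\mathcal{V}$. Having $T$ both $V-$representable and $V-$NI on the algebraically open $V$, the implication (iii)$\Rightarrow$(i) of Theorem \ref{V-amt} concludes that $V$ identifies $T$.

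The only delicate point is bookkeeping around the edge case $V\cap D(T)=\emptyset$: for non-empty $V$, both identification and $V-$NI fail in that case, so the hypotheses implicitly restrict attention to the $V$'s that meet $D(T)$ (with $V=\emptyset$ handled trivially in the definitions). Beyond this, no genuine obstacle arises: once Theorems \ref{V-amt} and \ref{X-amt} are aligned and Remark \ref{reprez} is invoked to propagate representability from $X$ down to each $V$, the argument is a near-mechanical chaining of the equivalences already catalogued in Section 2.
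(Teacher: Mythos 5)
Your proposal is correct and follows essentially the same route as the paper: both directions reduce to Theorem \ref{V-amt}, with the hypothesis $X\in\mathcal{V}$ used to obtain maximality (hence representability, via Theorem \ref{X-amt}) in the forward direction, and the observation that a representative of $T$ serves as a $V$-representative for every $V$ in the converse. Your extra bookkeeping on the edge case $V\cap D(T)=\emptyset$ is a harmless elaboration of what the paper leaves implicit.
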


\begin{proof} Since every representable operator is $V-$representable
(for every $V\subset X$) and monotone the converse implication is
straightforward from Theorem \ref{V-amt}. For the direct implication
one gets that $T$ is maximal monotone (and implicitly representable)
because it is identified by $X\in\mathcal{V}$. Again Theorem \ref{V-amt}
completes the argument. \end{proof}

\begin{theorem} \label{VNIR} Let $X$ be a LCS, let $T:X\rightrightarrows X^{*}$,
and let $V\subset X$ be non-empty algebraically open and convex such
that $T|_{V}\in\mathcal{M}(X)$ and $T$ is $V-$NI. Then
\begin{equation}
[\psi_{T|_{V}}=c]\cap V\times X^{*}=[\varphi_{T|_{V}}=c]\cap V\times X^{*}=[\varphi_{T|_{V}}\le c]\cap V\times X^{*}\label{vnir}
\end{equation}
is the unique $V-$representable extension and the unique maximal
monotone extension in $V\times X^{*}$ of $T|_{V}$. 

If, in addition, $T\in\mathcal{M}(X)$ then the string of equalities
in (\ref{vnir}) can be completed to
\begin{equation}
[\psi_{T|_{V}}=c]\cap V\times X^{*}=[\varphi_{T}=c]\cap V\times X^{*}=[\psi_{T}=c]\cap V\times X^{*}.\label{vnirc}
\end{equation}

If, in addition, $T\in\mathcal{M}(X)$ and $T$ is $V-$representable
then $V$ identifies $T$ and 
\[
[\psi_{T|_{V}}=c]\cap V\times X^{*}=[\varphi_{T|_{V}}=c]\cap V\times X^{*}=[\varphi_{T|_{V}}\le c]\cap V\times X^{*}
\]
\begin{equation}
=[\varphi_{T}=c]\cap V\times X^{*}=[\psi_{T}=c]\cap V\times X^{*}=\operatorname*{Graph}(T|_{V}).\label{vn}
\end{equation}

\end{theorem}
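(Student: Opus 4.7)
The plan is to work through the statement in stages: first establish the chain (\ref{vnir}) in the vein of Theorem \ref{V-amt}, then extract both uniqueness assertions from (\ref{vnir}), then prove the additional equalities in (\ref{vnirc}) under the extra hypothesis $T\in\mathcal{M}(X)$, and finally read off the identification and (\ref{vn}) from Theorem \ref{V-repres}.

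For (\ref{vnir}), the middle equality is immediate from $V$-NI. The inclusion $[\psi_{T|_{V}}=c]\subset[\varphi_{T|_{V}}=c]$ comes from Lemma \ref{h>c} applied to $\psi_{T|_{V}}\in\mathscr{C}$, which is available because $T|_{V}\in\mathcal{M}(X)$ gives $\psi_{T|_{V}}\in\mathscr{R}$. For the reverse inclusion on $V\times X^{*}$, I would repeat the directional-derivative step of Theorem \ref{V-amt}: at $z\in[\varphi_{T|_{V}}=c]\cap V\times X^{*}$, algebraic openness of $V$ ensures $z+tv\in V\times X^{*}$ for every $v\in Z$ and all sufficiently small $t>0$, so $V$-NI yields $\varphi_{T|_{V}}'(z;v)\ge z\cdot v$, i.e. $z\in\partial\varphi_{T|_{V}}(z)$ under the natural duality, whence the Fenchel--Young identity combined with $\psi_{T|_{V}}=\varphi_{T|_{V}}^{\square}$ forces $\psi_{T|_{V}}(z)=c(z)$. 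Now let $S$ denote the common set in (\ref{vnir}); since $S\subset[\psi_{T|_{V}}=c]\in\mathcal{M}(X)$, $S$ is itself monotone, extends $T|_{V}$, and lies in $V\times X^{*}$. Any monotone extension $R\supset T|_{V}$ contained in $V\times X^{*}$ is automatically m.r. to $T|_{V}$, hence $R\subset[\varphi_{T|_{V}}\le c]\cap V\times X^{*}=S$, so $S$ is the unique maximal monotone extension of $T|_{V}$ in $V\times X^{*}$. The remark following Theorem \ref{V-repres} identifies $S$ with the smallest $V$-representable extension; since any $V$-representable extension is monotone (as a subset of $[h=c]\in\mathcal{M}(X)$) and extends $T|_{V}$, the previous argument shows it is contained in $S$, hence equals $S$.

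Now assume $T\in\mathcal{M}(X)$. From $T|_{V}\subset T$ one gets $\varphi_{T}\ge\varphi_{T|_{V}}$ and $\psi_{T|_{V}}\ge\psi_{T}$; combining with $\psi_{T}\ge c$ and Lemma \ref{h>c} applied to $\psi_{T}$ yields the routine inclusions $[\varphi_{T}=c]\cap V\times X^{*}\subset[\varphi_{T|_{V}}=c]\cap V\times X^{*}$ and $[\psi_{T|_{V}}=c]\cap V\times X^{*}\subset[\psi_{T}=c]\cap V\times X^{*}\subset[\varphi_{T}=c]\cap V\times X^{*}$. The main obstacle is the remaining inclusion $[\varphi_{T|_{V}}=c]\cap V\times X^{*}\subset[\varphi_{T}=c]\cap V\times X^{*}$: given such a $z$ and an arbitrary $w\in T$, I need $c(z-w)\ge 0$. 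The case $w\in T|_{V}$ follows from $z$ being m.r. to $T|_{V}$. For $w\in T\setminus(V\times X^{*})$, I would pick $\lambda\in(0,1)$ close enough to $1$ that $z_{\lambda}:=\lambda z+(1-\lambda)w\in V\times X^{*}$ (possible since $V=\operatorname{core}V$ and $\operatorname{Pr}_{X}z\in V$), use $\varphi_{T|_{V}}(w)\le c(w)$ (which holds because $w\in T$ and $T|_{V}\subset T\in\mathcal{M}(X)$), and combine the convex upper bound $\varphi_{T|_{V}}(z_{\lambda})\le\lambda c(z)+(1-\lambda)c(w)$ with the bilinear identity $c(z_{\lambda})=\lambda c(z)+(1-\lambda)c(w)-\lambda(1-\lambda)c(z-w)$ and the $V$-NI lower bound $\varphi_{T|_{V}}(z_{\lambda})\ge c(z_{\lambda})$ to extract $\lambda(1-\lambda)c(z-w)\ge 0$, hence $c(z-w)\ge 0$.

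Finally, under the additional hypothesis that $T$ is $V$-representable, Theorem \ref{V-repres} gives $\operatorname{Graph}(T|_{V})=[\psi_{T|_{V}}=c]\cap V\times X^{*}$, and substituting into (\ref{vnir}) produces $[\varphi_{T|_{V}}\le c]\cap V\times X^{*}=\operatorname{Graph}(T|_{V})$, which is precisely the statement that $V$ identifies $T$ in the sense of Definition \ref{def-ident}; together with (\ref{vnirc}) this yields the full string (\ref{vn}).
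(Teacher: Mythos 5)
Your proof is correct. For (\ref{vnir}) and the two uniqueness claims you follow essentially the paper's route: the paper sets $R:=[\psi_{T|_{V}}=c]\cap V\times X^{*}$, notes that $R$ is $V$-representable and $V$-NI with $\varphi_{R}=\varphi_{T|_{V}}$, and invokes Theorem \ref{V-amt} to conclude that $V$ identifies $R$; you instead inline the directional-derivative/Fenchel--Young step of that theorem, which is the same computation. The uniqueness assertions are then extracted exactly as you do, from the facts that every monotone extension of $T|_{V}$ inside $V\times X^{*}$ is contained in $[\varphi_{T|_{V}}\le c]\cap V\times X^{*}$ and that $[\psi_{T|_{V}}=c]\cap V\times X^{*}$ is the smallest $V$-representable extension. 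Your reading of (\ref{vn}) also matches the paper's.

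Where you genuinely diverge is (\ref{vnirc}), and there your extra work is unnecessary. The paper simply records the sandwich $c\le\varphi_{T|_{V}}\le\varphi_{T}\le\psi_{T}\le\psi_{T|_{V}}$ on $V\times X^{*}$ and lets (\ref{vnir}) close the resulting cycle of inclusions. You have already written down precisely the chain $[\psi_{T|_{V}}=c]\cap V\times X^{*}\subset[\psi_{T}=c]\cap V\times X^{*}\subset[\varphi_{T}=c]\cap V\times X^{*}\subset[\varphi_{T|_{V}}=c]\cap V\times X^{*}$, and since (\ref{vnir}) identifies the two ends of this chain, the ``remaining inclusion'' you single out as the main obstacle is already settled --- all four sets coincide with no further argument. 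Your secant argument (pushing $z_{\lambda}=\lambda z+(1-\lambda)w$ into $V\times X^{*}$ via $V=\operatorname*{core}V$, and combining convexity of $\varphi_{T|_{V}}$, the identity $c(z_{\lambda})=\lambda c(z)+(1-\lambda)c(w)-\lambda(1-\lambda)c(z-w)$, and the $V$-NI lower bound) is correct as stated, and it does have independent interest in that it proves $[\varphi_{T|_{V}}=c]\cap V\times X^{*}\subset[\varphi_{T}\le c]$ directly without passing through $\psi_{T}$ or Lemma \ref{h>c}; but in the present context it is redundant, and noticing that the cycle already closes would shorten your proof to the paper's length.
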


\begin{proof} Let $R:=[\psi_{T|_{V}}=c]\cap V\times X^{*}$. Then
$R$ is $V-$representable and $V-$NI since $T|_{V}\in\mathcal{M}(X)$,
$T$ is $V-$NI, and $\varphi_{R}=\varphi_{T|_{V}}$. According to
Theorem \ref{V-amt}, $V$ identifies $R$, i.e., $R$ is maximal
monotone in $V\times X^{*}$. From $\psi_{T|_{V}}\ge\varphi_{T|_{V}}$
and $\varphi_{T|_{V}}\ge c$ in $V\times X^{*}$ we know that $R\subset[\varphi_{T|_{V}}=c]\cap V\times X^{*}=[\varphi_{T|_{V}}\le c]\cap V\times X^{*}$
so $R=[\varphi_{T|_{V}}=c]\cap V\times X^{*}$ since $[\varphi_{T|_{V}}=c]\cap V\times X^{*}\in\mathcal{M}(X)$.
Taking into consideration that $R$ is the smallest $V-$representable
extension of $T|_{V}$ the conclusion follows. 

If, in addition, $T\in\mathcal{M}(X)$ then, due to the facts that
$T$ is $V-$NI and $T|_{V}\subset T$, we have that for every $z\in V\times X^{*}$
\[
c(z)\le\varphi_{T|_{V}}(z)\le\varphi_{T}(z)\le\psi_{T}(z)\le\psi_{T|_{V}}(z),
\]
whence $[\psi_{T|_{V}}=c]\cap V\times X^{*}\subset[\psi_{T}=c]\cap V\times X^{*}\subset[\varphi_{T}=c]\cap V\times X^{*}\subset[\varphi_{T|_{V}}=c]\cap V\times X^{*}$.
Relation (\ref{vnir}) completes the proof of (\ref{vnirc}). 

Relation (\ref{vn}) follows from Theorems \ref{V-repres}, \ref{V-amt}
and relations (\ref{vnir}), (\ref{vnirc}). \end{proof} 

\strut

We are ready to prove that for a representable (and implicitly for
a maximal monotone) operator the locatable and identifiable notions
coincide.

\begin{theorem} \label{lo+re <-> id} Let $X$ be a LCS and let $T:X\rightrightarrows X^{*}$.
The following are equivalent

\medskip

\emph{(i)} $T\in\mathcal{M}(X)$ and $T$ is identifiable,

\medskip

\emph{(ii)} $T$ is representable and locatable, 

\medskip

\emph{(iii)} $T$ is representable and locally-NI.

\medskip

In particular, every representable and locatable operator is maximal
monotone. \end{theorem}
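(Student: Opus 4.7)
The plan is to prove the cycle (i) $\Rightarrow$ (ii) $\Rightarrow$ (iii) $\Rightarrow$ (i), using Theorem \ref{V-amt} (together with its specialization Theorem \ref{X-amt}) as the main engine and invoking $V=X$ at two different moments to bootstrap between the ``global'' and ``local'' hypotheses.

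For (i) $\Rightarrow$ (ii), I would apply the identifiability hypothesis to the open convex set $V=X$, which satisfies $X\cap D(T)=D(T)\neq\emptyset$ because $T\in\mathcal{M}(X)$. Since $X$ identifies $T$, the operator $T$ is maximal monotone, and Theorem \ref{X-amt} gives that $T$ is representable. The second half is the implication ``identifiable $\Rightarrow$ locatable'' already observed in the discussion following Definition \ref{def-local}, which is immediate from the inclusion $[\varphi_{T|_V}\le c]\cap V\times X^{*}\subset\operatorname*{Graph}T\subset D(T)\times X^{*}$.

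The implication (ii) $\Rightarrow$ (iii) is exactly what Theorem \ref{V-locate} provides: for each open convex $V$ with $V\cap D(T)\neq\emptyset$, location by $V$ already forces $T$ to be $V$-NI, so locatability entails local-NI.

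The substantive step is (iii) $\Rightarrow$ (i). First, specializing local-NI to $V=X$ yields that $T$ is NI in the sense that $\varphi_T\ge c$; together with representability, Theorem \ref{X-amt} delivers $T\in\mathfrak{M}(X)$. To upgrade this to identifiability, fix any open convex $V\subset X$ with $V\cap D(T)\neq\emptyset$. Since $V$ is topologically open in a LCS, it is algebraically open, so the ``algebraically open'' clause of Theorem \ref{V-amt} is available. Moreover, representability of $T$ gives $V$-representability of $T$ (Remark \ref{reprez}), and local-NI gives $V$-NI. The equivalence (iii) $\Leftrightarrow$ (i) in Theorem \ref{V-amt} then yields that $V$ identifies $T$, proving $T$ is identifiable. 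The ``In particular'' clause drops out of (ii) $\Rightarrow$ (iii) $\Rightarrow$ (i), since maximality is part of condition (i).

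The only real subtlety is making sure the algebraic-openness hypothesis of Theorem \ref{V-amt} is in force in (iii) $\Rightarrow$ (i); this is free because in a LCS every open set is algebraically open, so no topological juggling is needed.
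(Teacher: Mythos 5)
Your proposal is correct and follows essentially the same route as the paper: the paper dispatches (i)~$\Leftrightarrow$~(iii) by citing Theorem~\ref{identifiable} with $\mathcal{V}$ the class of open convex sets meeting $D(T)$, and that theorem is itself proved by exactly your argument (use $V=X$ together with Theorem~\ref{X-amt} to get maximality/representability, then apply Theorem~\ref{V-amt} to each algebraically open $V$). Your unwinding of that citation into a direct appeal to Theorem~\ref{V-amt}, and your remaining implications (i)~$\Rightarrow$~(ii) and (ii)~$\Rightarrow$~(iii) via Theorem~\ref{V-locate}, match the paper's proof.
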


\begin{proof} (i) $\Leftrightarrow$ (iii) is a particular case of
Theorem \ref{identifiable} for $\mathcal{V}=\{V\subset X\mid V\ {\rm is\ open\ and\ convex},\ V\cap D(T)\neq\emptyset\}$. 

(i) $\Rightarrow$ (ii) is true since every identifiable monotone
operator is locatable and maximal monotone. 

(ii) $\Rightarrow$ (iii) is straightforward since every locatable
operator is locally-NI. \end{proof}

\strut

The global representability condition in the previous theorem can
be replaced by a weaker local form of it.

\begin{definition} \emph{Let $X$ be a LCS. An operator }$T:X\rightrightarrows X^{*}$\emph{
is }low-representable\emph{ if, for every $z=(x,x^{*})\in[\psi_{T}=c]$,
there is} $V\in\mathscr{{V}}(x)$ \emph{such that $T$ is $V-$representable.}
\end{definition} 

Every representable operator is low-representable (just take $V=X$). 

\begin{theorem} \label{LowRep} Let $X$ be a LCS and let $T:X\rightrightarrows X^{*}$.
Consider the conditions

\medskip

\emph{(iv)} $T$ is monotone, low-representable, and locatable;

\medskip

\emph{(v)} $T$ is monotone, low-representable, and locally-NI.

\medskip

Conditions \emph{(i) \textendash{} (iii)} being those from Theorem
\ref{lo+re <-> id}, we have \emph{(i) $\Leftrightarrow$ (ii) $\Leftrightarrow$
(iii) $\Leftrightarrow$ (iv) $\Leftrightarrow$ (v).} \end{theorem}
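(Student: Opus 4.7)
The plan is to build on Theorem \ref{lo+re <-> id}, which already delivers (i) $\Leftrightarrow$ (ii) $\Leftrightarrow$ (iii), and to attach (iv) and (v) via three easy implications and one substantive one. Since every representable operator is low-representable (take $V=X$ in the definition), (ii) $\Rightarrow$ (iv) and (iii) $\Rightarrow$ (v) are immediate. Because every locatable operator is locally-NI, as observed right after Theorem \ref{V-locate}, (iv) $\Rightarrow$ (v) is also immediate. This leaves (v) $\Rightarrow$ (iii), equivalently the statement that any monotone, low-representable, locally-NI operator is representable, as the only substantive step.

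For that step, I would first apply the locally-NI hypothesis to the open convex set $V=X$ (which meets $D(T)$ since $T\in\mathcal{M}(X)$) to conclude that $T$ is $X$-NI, i.e.\ $\varphi_T\ge c$. The goal is then the identity $\operatorname{Graph} T=[\psi_T=c]$, which together with $\psi_T\in\mathscr{R}$ (granted by Theorem \ref{propr}(ii)) certifies $T$ as representable. The inclusion $\operatorname{Graph} T\subset[\psi_T=c]$ holds for any monotone $T$: $\psi_T\le c_T$ forces $\psi_T\le c$ on $\operatorname{Graph} T$, while monotonicity provides $\psi_T\ge c$ globally.

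For the reverse inclusion, fix $z=(x,x^*)\in[\psi_T=c]$. Low-representability supplies some $V\in\mathscr{V}_{\tau}(x)$ on which $T$ is $V$-representable; passing to an open convex subneighborhood if necessary, I may assume $V$ is itself open and convex, preserving $V$-representability via the inheritance property in Remark \ref{reprez}. The definition of $V$-representability then yields $V\cap D(T)\neq\emptyset$ and $T|_V\in\mathcal{M}(X)$, and the locally-NI hypothesis makes $T$ in addition $V$-NI. All the conditions of the last clause of Theorem \ref{VNIR} are now in place ($V$ nonempty, algebraically open and convex; $T\in\mathcal{M}(X)$; $T$ both $V$-representable and $V$-NI), and that clause gives $[\psi_T=c]\cap V\times X^*=\operatorname{Graph}(T|_V)\subset\operatorname{Graph} T$, placing $z$ in $\operatorname{Graph} T$. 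The only non-mechanical point is recognizing that low-representability is designed precisely to furnish, at every $z\in[\psi_T=c]$, the local data required to unleash Theorem \ref{VNIR}; once that observation is in hand, the remaining verifications are bookkeeping.
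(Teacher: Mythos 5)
Your proof is correct and follows essentially the same route as the paper: the three easy implications are dispatched exactly as in the text, and (v) $\Rightarrow$ (iii) is proved by taking $z\in[\psi_T=c]$, invoking low-representability to obtain an open convex $V$-representable neighborhood, and applying Theorem \ref{VNIR} to get $[\psi_T=c]\cap V\times X^*\subset\operatorname{Graph}T$. The extra bookkeeping you supply (shrinking $V$ to an open convex neighborhood, checking $T\subset[\psi_T=c]$) only makes explicit what the paper's terser proof leaves implicit.
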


\begin{proof} The implications (ii) $\Rightarrow$ (iv), (iii) $\Rightarrow$
(v), (iv) $\Rightarrow$ (v) are plain. 

For (v) $\Rightarrow$ (iii) we prove that $T$ is representable,
i.e., $[\psi_{T}=c]\subset T$. For every $z=(x,x^{*})\in[\psi_{T}=c]$
let $V\in\mathscr{{V}}(x)$ be open convex and such that $T$ is $V-$representable.
Then, according to Theorem \ref{VNIR}, $z\in[\psi_{T}=c]\cap V\times X^{*}\subset T$.
\end{proof} 

\begin{theorem} \label{UI} Let $(X,\tau)$ be a LCS and let $T\in\mathcal{M}(X)$
be locally-NI. Then $[\varphi_{T}=c]=[\varphi_{T}\le c]=[\psi_{T}=c]$
is the unique identifiable extension of $T$. \end{theorem}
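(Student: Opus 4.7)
Set $\hat T:=[\varphi_T\le c]$. My plan is, first, to apply Theorem~\ref{VNIR} with $V=X$: since $X$ is algebraically open and convex, $T|_X=T\in\mathcal{M}(X)$, and the local-NI hypothesis together with $X\cap D(T)=D(T)\neq\emptyset$ forces $T$ to be $X$-NI, the theorem delivers
\[
[\psi_T=c]=[\varphi_T=c]=[\varphi_T\le c]=\hat T,
\]
and identifies $\hat T$ as the unique maximal monotone extension of $T$ in $X\times X^*$.

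Next I would show that $\hat T$ is itself identifiable. By Theorem~\ref{lo+re <-> id} it suffices to prove $\hat T$ is representable and locally-NI. Representability is immediate: $\psi_T\in\mathscr{R}$ since $T\in\mathcal{M}(X)$ gives $\psi_T\in\Gamma_{\tau\times\omega^*}$ and $\psi_T\ge c$ by Theorem~\ref{propr}(ii), while $\hat T=[\psi_T=c]$. The main step---and the chief obstacle---is local-NI of $\hat T$, which I would derive from the key inclusion
\[
D(\hat T)\ \subset\ \overline{D(T)}^\tau.
\]
To prove it, note that $\hat T\subset\operatorname{dom}\psi_T\subset\overline{\operatorname{conv}T}^{\tau\times\omega^*}$ (because $\psi_T$ is the $(\tau\times\omega^*)$-lsc hull of $\operatorname{conv}c_T$ and $\operatorname{dom}c_T=\operatorname{Graph}T$); projecting onto $X$ yields $D(\hat T)\subset\overline{\operatorname{conv}D(T)}^\tau$, and Theorem~\ref{convex closure}---applicable since $T$ is locally-NI---asserts that $\overline{D(T)}^\tau$ is convex, hence equals $\overline{\operatorname{conv}D(T)}^\tau$. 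Now for an open convex $V$ with $V\cap D(\hat T)\neq\emptyset$, this inclusion together with the openness of $V$ forces $V\cap D(T)\neq\emptyset$; the local-NI hypothesis on $T$ then makes $T$ $V$-NI, and since $T|_V\subset\hat T|_V$ implies $\varphi_{\hat T|_V}\ge\varphi_{T|_V}\ge c$ on $V\times X^*$, $\hat T$ is $V$-NI as well.

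Finally, for uniqueness: any identifiable extension $T'$ of $T$ satisfies $X\cap D(T')\neq\emptyset$ and $X$ identifies $T'$, so by definition $T'\in\mathfrak{M}(X)$; the uniqueness half of Theorem~\ref{VNIR} applied with $V=X$ then forces $T'=\hat T$. The only genuine work is the inclusion $D(\hat T)\subset\overline{D(T)}^\tau$, which rests crucially on the convexity of $\overline{D(T)}^\tau$ supplied by Theorem~\ref{convex closure}; the remainder of the argument is bookkeeping around Theorems~\ref{VNIR} and~\ref{lo+re <-> id}.
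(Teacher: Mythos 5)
Your proposal is correct and follows essentially the same route as the paper: identify $[\varphi_T\le c]=[\psi_T=c]$ as the unique maximal monotone extension via Theorem~\ref{VNIR}, reduce identifiability to local-NI, and establish that $V\cap D(\hat T)\neq\emptyset$ forces $V\cap D(T)\neq\emptyset$ by combining the domain inclusion through $\operatorname{dom}\psi$ with the convexity of $\overline{D(T)}$ from Theorem~\ref{convex closure}. The only (cosmetic) difference is that you derive $D(\hat T)\subset\overline{D(T)}$ globally from $\hat T\subset\operatorname{dom}\psi_T$, whereas the paper argues pointwise through $\operatorname{dom}\psi_{T|_V}$.
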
 

\begin{proof} Because $T$ is NI, from (\ref{vnir}), $S:=[\varphi_{T}=c]=[\varphi_{T}\le c]=[\psi_{T}=c]$
is the unique maximal monotone extension of $T$. Since every identifiable
operator is maximal monotone, it suffices to prove that $S$ is locally-NI
to get that $S$ is the unique identifiable extension of $T$. 

But, if an open convex $V\subset X$ has $V\cap D(S)\neq\emptyset$
then $V\cap D(T)\neq\emptyset$. 

Indeed, if $x\in V\cap D(S)$ pick any $x^{*}\in S(x)$ and set $z=(x,x^{*})\in S$.
Then, since $T$ is $V-$NI, $c(z)\le\varphi_{T|_{V}}(z)\le\varphi_{T}(z)=c(z)$
so, according to Theorem \ref{VNIR}, $z\in[\varphi_{T|_{V}}=c]\cap V\times X^{*}=[\psi_{T|_{V}}=c]\cap V\times X^{*}\subset\operatorname*{dom}\psi_{T|_{V}}\subset\operatorname*{cl}_{\tau\times w^{*}}(\operatorname*{Graph}(T|_{V}))$
followed by $x\in\operatorname*{Pr}_{X}(\operatorname*{dom}\psi_{T|_{V}})\subset\overline{\operatorname*{conv}}(D(T)\cap V)\subset\overline{D(T)}$
due to the convexity of $\overline{D(T)}$ (see Theorem \ref{convex closure}).
Hence $x\in V\cap\overline{D(T)}\neq\emptyset$ from which $V\cap D(T)\neq\emptyset$
since $V$ is open. 

Hence, for every open convex $V\subset X$ such that $V\cap D(S)\neq\emptyset$,
$\varphi_{S|_{V}}\ge\varphi_{T|_{V}}\ge c$ in $V\times X^{*}$, because
$T$ is $V-$NI, i.e., $S$ is $V-$NI.Therefore $S$ is locally-NI.
\end{proof} 

\strut

The next result is a version of Theorem \ref{V-amt} for closed convex
sets with non-empty interior. First note that for every $T:X\rightrightarrows X^{*}$
and $C\subset X$ we have
\begin{equation}
[\varphi_{T+N_{C}}\le c]\cap C\times X^{*}=[\varphi_{T|_{C}}\le c]\cap C\times X^{*}.\label{tnc}
\end{equation}
Indeed, the direct inclusion follows from $T|_{C}\subset T+N_{C}$.
Conversely, if $z=(x,x^{*})\in C\times X^{*}$ is m.r. to $T|_{C}$
and $(a,a^{*})\in T|_{C}$, $n^{*}\in N_{C}(a)$ then $\langle x-a,n^{*}\rangle\le0$,
$\langle x-a,x^{*}-a^{*}\rangle\ge0$, and $\langle x-a,x^{*}-a^{*}-n^{*}\rangle\ge0$,
that is, $z$ is m.r. to $T+N_{C}$. 

\begin{theorem} \label{C-amt}Let $X$ be a LCS, let $T:X\rightrightarrows X^{*}$,
and let $C\subset X$ be closed convex such that $D(T)\cap\operatorname*{int}C\neq\emptyset$.
If $T$ is $C-$representable then the following are equivalent

\medskip

\emph{(i)} $C$ locates $T$,

\medskip

\emph{(ii)} $T$ is $C-$NI,

\medskip

\emph{(iii)} $[\varphi_{T|_{C}}\le c]\cap C\times X^{*}\subset T+N_{C}$,

\medskip

\emph{(iv)} $C$ identifies $T+N_{C}$. \end{theorem}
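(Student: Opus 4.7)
I close the cycle (iv)$\Leftrightarrow$(iii)$\Rightarrow$(i)$\Rightarrow$(ii)$\Rightarrow$(iv); three of the four arrows are formal and the entire content rests on (ii)$\Rightarrow$(iv).

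\textbf{The three formal arrows.} Because $D(T+N_{C})\subset C$ we have $(T+N_{C})|_{C}=T+N_{C}$, so condition (iv) reads $[\varphi_{T+N_{C}}\le c]\cap C\times X^{*}\subset T+N_{C}$, which by (\ref{tnc}) is literally (iii); hence (iii)$\Leftrightarrow$(iv). The inclusion $T+N_{C}\subset D(T+N_{C})\times X^{*}\subset D(T)\times X^{*}$ projects (iii) onto (i). And Theorem \ref{V-locate}(i)$\Rightarrow$(ii) applied with $V=C$ supplies (i)$\Rightarrow$(ii).

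\textbf{The main step (ii)$\Rightarrow$(iv), interior points.} Let $z=(x,x^{*})\in[\varphi_{T+N_{C}}\le c]\cap C\times X^{*}$; by (\ref{tnc}) and $C$-NI, $\varphi_{T|_{C}}(z)=c(z)$, and the task is $z\in T+N_{C}$. For $x\in\operatorname*{int}C$ the proof of Theorem \ref{V-amt}(iii)$\Rightarrow$(i) transfers verbatim: the algebraic interiority of $x$ inside $C$ lets one perturb $z$ in every direction $w\in Z$ while staying in $C\times X^{*}$, so $C$-NI gives $\varphi_{T|_{C}}'(z;w)\ge z\cdot w$, i.e.\ $z\in\partial\varphi_{T|_{C}}(z)$; Fenchel--Young equality combined with $\varphi_{T|_{C}}(z)=c(z)$ then forces $\psi_{T|_{C}}(z)=c(z)$, and the closed-convex form of Theorem \ref{V-repres} (which says that for closed convex $V=C$, $C$-representability of $T$ is equivalent to representability of $T|_{C}$) places $z$ into $T|_{C}\subset T+N_{C}$.

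\textbf{Main obstacle: the boundary.} The genuinely difficult case is $x\in\operatorname*{bd}C$, where the directional-derivative argument collapses in directions leaving $C$. My plan is an anchored approximation from the interior: fix $y_{0}\in D(T)\cap\operatorname*{int}C$ with $y_{0}^{*}\in Ty_{0}$, put $y_{\lambda}=(1-\lambda)y_{0}+\lambda x\in\operatorname*{int}C$ for $\lambda\in[0,1)$, and build a family $z_{\lambda}=(y_{\lambda},x_{\lambda}^{*})\to z$ (in $\tau\times\omega^{*}$) lying in $[\varphi_{T|_{C}}=c]\cap\operatorname*{int}C\times X^{*}$; the interior case then places every $z_{\lambda}$ in $T|_{C}\subset T+N_{C}$, and $\tau\times\omega^{*}$-closedness of $T|_{C}=[\psi_{T|_{C}}=c]$ together with closedness of $N_{C}$ transports the inclusion to the limit $z\in T+N_{C}$. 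The delicate technical point is that the naive convex combination $z_{\lambda}=(1-\lambda)(y_{0},y_{0}^{*})+\lambda z$ only yields $\varphi_{T|_{C}}(z_{\lambda})-c(z_{\lambda})\le\lambda(1-\lambda)\langle x-y_{0},x^{*}-y_{0}^{*}\rangle$, a nonnegative but generically strictly positive residual, so $z_{\lambda}$ need not be m.r.\ to $T|_{C}$; absorbing this residual by a $c$-neutral correction of the second coordinate, or equivalently recasting the step as a Moreau--Rockafellar decomposition applied to the convex function $v^{*}\mapsto\varphi_{T|_{C}}(x,x^{*}+v^{*})-\langle x,x^{*}+v^{*}\rangle$ whose constraint qualification is precisely the standing hypothesis $D(T)\cap\operatorname*{int}C\ne\emptyset$, is the crux that delivers the decomposition $x^{*}=y^{*}+n^{*}\in Tx+N_{C}(x)$ and closes the cycle.
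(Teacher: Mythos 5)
Your three formal arrows are exactly the paper's, and your interior-point argument is correct: for $x\in\operatorname*{int}C$ the directional-derivative trick gives $z\in\partial\varphi_{T|_{C}}(z)$, hence $\psi_{T|_{C}}(z)=c(z)$, and $C$-representability places $z$ in $T|_{C}$. But the boundary case --- which you correctly identify as the entire content of the theorem --- is not proved. Your anchored approximation is explicitly acknowledged to fail as stated (the residual $\lambda(1-\lambda)\langle x-y_{0},x^{*}-y_{0}^{*}\rangle$ is generically positive, so $z_{\lambda}$ is not m.r.\ to $T|_{C}$), and the proposed repair is a one-clause gesture. The ``Moreau--Rockafellar decomposition of $v^{*}\mapsto\varphi_{T|_{C}}(x,x^{*}+v^{*})-\langle x,x^{*}+v^{*}\rangle$'' does not obviously deliver $x^{*}\in Tx+N_{C}(x)$: that is a function of the dual variable alone with $x$ frozen, it is not presented as a sum of two convex functions whose conjugates are $\psi_{T|_{C}}(x,\cdot)$ and $\sigma_{C}$, and the qualification $D(T)\cap\operatorname*{int}C\neq\emptyset$ lives in the $x$-variable, so it is not ``precisely'' the constraint qualification for any decomposition in $v^{*}$. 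As written, (ii)$\Rightarrow$(iii) is open.

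The paper closes this gap with a single device that treats interior and boundary points uniformly, and it is worth seeing why it works where your argument stalls. Set $h:=\varphi_{T|_{C}}+\iota_{C\times X^{*}}$. Condition (ii) says exactly that $h\ge c$ \emph{globally}, so $h\in\mathscr{R}$ and Lemma \ref{h>c} applies to $h$ at \emph{every} point of $[h=c]=[\varphi_{T|_{C}}\le c]\cap C\times X^{*}$: the directional derivatives of $h$ (not of $\varphi_{T|_{C}}$) dominate $z\cdot w$ in all directions, because outside $C\times X^{*}$ the indicator makes $h=+\infty$. This yields $[h=c]\subset[h^{\square}=c]$ with no interiority assumption on $z$. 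The hypothesis $D(T)\cap\operatorname*{int}C\neq\emptyset$ is then spent where it belongs, namely as the constraint qualification in Z\u{a}linescu's exact conjugation formula
\[
h^{\square}(x,x^{*})=\min\{\psi_{T|_{C}}(x,u^{*})+\sigma_{C}(x^{*}-u^{*})\mid u^{*}\in X^{*}\},
\]
and for $z=(x,x^{*})\in[h^{\square}=c]\cap C\times X^{*}$ the attained minimizer $v^{*}$ satisfies $\psi_{T|_{C}}(x,v^{*})=\langle x,v^{*}\rangle$ and $\sigma_{C}(x^{*}-v^{*})=\langle x,x^{*}-v^{*}\rangle$, i.e.\ $(x,v^{*})\in T|_{C}$ (by $C$-representability) and $x^{*}-v^{*}\in N_{C}(x)$. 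This is the ``$c$-neutral correction of the second coordinate'' you were looking for, but it is produced by conjugate duality in both variables at once, not by a decomposition in $v^{*}$ alone or by approximation from $\operatorname*{int}C$. To complete your write-up you would need to supply this (or an equivalent) argument in full.
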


\begin{proof} The implication (i) $\Rightarrow$ (ii) is part of
Theorem \ref{V-locate} while (iii) $\Rightarrow$ (i) is plain.

(ii) $\Rightarrow$ (iii) Since $T$ is $C-$NI, we have $h:=\varphi_{T|_{C}}+\iota_{C\times X^{*}}\in\mathscr{R}$
and $[h=c]=[\varphi_{T|_{C}}=c]\cap C\times X^{*}=[\varphi_{T|_{C}}\le c]\cap C\times X^{*}$.
According to \cite[Theorem 2.8.7 (iii),\ p.\ 127]{MR1921556} 
\begin{equation}
h^{\square}(x,x^{*})=\min\{\psi_{T|_{C}}(x,u^{*})+\sigma_{C}(x^{*}-u^{*})\mid u^{*}\in X^{*}\},\ (x,x^{*})\in X\times X^{*}.\label{eq:*}
\end{equation}
Here ``$\min$'' stands for an infimum that is attained when finite. 

For every $z=(x,x^{*})\in[h^{\square}=c]\cap C\times X^{*}$ there
is $v^{*}\in X^{*}$ such that $\psi_{T|_{C}}(x,v^{*})+\sigma_{C}(x^{*}-v^{*})=\langle x,x^{*}\rangle$.
Since $\psi_{T|_{C}}\ge c$, $\sigma_{C}(x^{*}-v^{*})\ge\langle x,x^{*}-v^{*}\rangle$,
this implies that $(x,v^{*})\in[\psi_{T|_{C}}=c]\cap C\times X^{*}=T|_{C}$,
$x^{*}-v^{*}\in N_{C}(x)$, and $z\in T+N_{C}$. Therefore $[h^{\square}=c]\cap C\times X^{*}\subset T+N_{C}$.
The inclusion $[h=c]\subset[h^{\square}=c]$ completes the proof of
this implication (see Lemma \ref{h>c}). 

(iii) $\Leftrightarrow$ (iv) Since $(T+N_{C})|_{C}=T+N_{C}$, this
equivalence follows from (\ref{tnc}). \end{proof}

\strut

In the absence of the $C-$representability of $T$ the previous result
still holds with $T$ replaced by $R=[\psi_{T|_{C}}=c]\cap C\times X^{*}$
which is the smallest $C-$representable extension of $T$ in the
following string of implications: $C$ locates $T$ $\Rightarrow$
$T$ is $C-$NI $\Leftrightarrow$ $R$ is $C-$NI $\Leftrightarrow$
$C$ locates $R$ $\Leftrightarrow$ $C$ identifies $R+N_{C}$. The
converse of the first implication is false as seen from Remark \ref{Vbar->V?}
below for $C=\overline{V}$. 

\begin{corollary} \label{C-amt-cor}Let $X$ be a LCS, let $T:X\rightrightarrows X^{*}$,
and let $C\subset X$ be closed convex such that $D(T)\cap\operatorname*{int}C\neq\emptyset$
and $T$ is $C-$representable. Then $T+N_{C}\in\mathfrak{M}(X)$
iff $C$ locates $T$ and $[\varphi_{T+N_{C}}\le c]\subset C\times X^{*}$
iff $T$ is $C-$NI and $[\varphi_{T+N_{C}}\le c]\subset C\times X^{*}$.
\end{corollary}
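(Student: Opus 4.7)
The plan is to reduce Corollary~\ref{C-amt-cor} to Theorem~\ref{C-amt} by carefully exploiting the fact that $D(T+N_{C})\subset C$. The hypotheses on $X$, $T$, and $C$ are exactly those of Theorem~\ref{C-amt}, so that theorem already delivers the equivalence between ``$C$ locates $T$'' and ``$T$ is $C-$NI'' and ``$C$ identifies $T+N_{C}$'', free of charge. Hence the last two conditions in the corollary are equivalent, and it only remains to link them with the maximal monotonicity of $T+N_{C}$.

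First I would verify that $T+N_{C}$ is monotone: $T|_{C}$ is monotone because $T$ is $C-$representable (Remark~\ref{reprez}), $N_{C}$ is monotone, and a standard splitting argument writing each selection $x_{i}^{\ast}=t_{i}^{\ast}+n_{i}^{\ast}$ with $t_{i}^{\ast}\in T(x_{i})$, $n_{i}^{\ast}\in N_{C}(x_{i})$ shows monotonicity of the sum. Next, for the direction $T+N_{C}\in\mathfrak{M}(X)\Rightarrow(C{\rm\ locates\ }T,\,[\varphi_{T+N_{C}}\leq c]\subset C\times X^{\ast})$, I would note that maximal monotonicity gives $[\varphi_{T+N_{C}}\leq c]\subset T+N_{C}\subset D(T+N_{C})\times X^{\ast}\subset C\times X^{\ast}$, which provides the containment statement for free. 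This also forces $[\varphi_{T+N_{C}}\leq c]\cap C\times X^{\ast}\subset T+N_{C}$, i.e.\ $C$ identifies $T+N_{C}$, and so Theorem~\ref{C-amt} yields that $C$ locates $T$ (and $T$ is $C-$NI).

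For the reverse direction, assume $C$ locates $T$ (or equivalently $T$ is $C-$NI) together with $[\varphi_{T+N_{C}}\leq c]\subset C\times X^{\ast}$. Theorem~\ref{C-amt} gives that $C$ identifies $T+N_{C}$, that is, $[\varphi_{T+N_{C}}\leq c]\cap C\times X^{\ast}\subset T+N_{C}$; intersected with the assumed inclusion, this collapses to $[\varphi_{T+N_{C}}\leq c]\subset T+N_{C}$. Combined with monotonicity, this is exactly the standard characterization of maximal monotonicity (every point m.r.\ to the graph already lies in the graph), so $T+N_{C}\in\mathfrak{M}(X)$.

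No real obstacle is anticipated: the whole argument is an orchestration of Theorem~\ref{C-amt} and the trivial but crucial observation that $D(T+N_{C})\subset C$. The only mildly delicate point is recognizing that the condition ``$C$ identifies $T+N_{C}$'' from Theorem~\ref{C-amt}, which a priori only controls $[\varphi_{T+N_{C}}\leq c]\cap C\times X^{\ast}$, upgrades to full maximal monotonicity precisely when the extra hypothesis $[\varphi_{T+N_{C}}\leq c]\subset C\times X^{\ast}$ is available, explaining why that hypothesis appears in the corollary and why, conversely, it is automatic as soon as $T+N_{C}$ is maximal monotone.
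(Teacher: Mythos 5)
Your proof is correct and follows the route the paper clearly intends: the corollary is stated without proof immediately after Theorem \ref{C-amt}, and your argument is exactly the expected derivation from it, using the identity $(T+N_{C})|_{C}=T+N_{C}$ (so that ``$C$ identifies $T+N_{C}$'' reads as $[\varphi_{T+N_{C}}\le c]\cap C\times X^{*}\subset T+N_{C}$), the monotonicity of $T+N_{C}$, and the observation that the extra hypothesis $[\varphi_{T+N_{C}}\le c]\subset C\times X^{*}$ is precisely what upgrades identification on $C$ to global maximality and is automatic in the converse direction since $D(T+N_{C})\subset C$.
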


\begin{proposition}\label{V->V bar} Let $X$ be a LCS, let $T:X\rightrightarrows X^{*}$,
and let $V\subset X$ be open convex such that $D(T)\cap V\neq\emptyset$
and $T|_{V}\in\mathcal{M}(X)$. If $T$ is $V-$NI then
\[
\varphi_{T|_{V}}\ge c,\ {\rm on}\ \overline{V}\times X^{*}.
\]
In particular, for every $V\subset S\subset\overline{V}$, $T$ is
$S-$NI.

If, in addition, $T$ is $\overline{V}-$representable, then 
\[
[\varphi_{T|_{V}}\le c]\cap\overline{V}\times X^{*}\subset T+N_{\overline{V}}\subset D(T)\times X^{*}.
\]

In particular, for every $V\subset S\subset\overline{V}$, $S$ locates
$T$ and identifies $T+N_{\overline{V}}$. \end{proposition}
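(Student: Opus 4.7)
The plan is to prove the proposition in two logical blocks, mirroring the structure of the statement, and then read off the ``In particular'' consequences as corollaries.

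First I would establish the extension $\varphi_{T|_V}\ge c$ from $V\times X^*$ to $\overline V\times X^*$ by a pure convexity argument. Fix $x^*\in X^*$ and set $g(x):=\varphi_{T|_V}(x,x^*)-\langle x,x^*\rangle$; since $\varphi_{T|_V}\in\Gamma_{\tau\times\omega^*}(Z)$ is convex, its section $g$ is convex on $X$, and by hypothesis $g\ge 0$ on $V$. Given $x\in\overline V$, pick some $x_0\in V\cap D(T)$. Because $V$ is open convex with $x_0\in V$ and $x\in\overline V$, the segment point $x_t:=(1-t)x+tx_0$ lies in $V$ for every $t\in(0,1]$. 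Convexity gives $0\le g(x_t)\le(1-t)g(x)+tg(x_0)$; letting $t\downarrow 0$ (the case $g(x)=+\infty$ being trivial) forces $g(x)\ge 0$. The ``in particular'' $S$-NI statement follows immediately from $T|_V\subset T|_S$, which yields $\varphi_{T|_V}\le\varphi_{T|_S}$ and therefore $\varphi_{T|_S}\ge c$ on $S\times X^*\subset\overline V\times X^*$.

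For the representable part, I would mimic the proof of Theorem \ref{C-amt} with the sharper function $\varphi_{T|_V}$ in place of $\varphi_{T|_{\overline V}}$. Set $h:=\varphi_{T|_V}+\iota_{\overline V\times X^*}$; lower semicontinuity of both summands together with Part~I show that $h\in\mathscr{R}$. Since $\operatorname{dom}\varphi_{T|_V}\supset T|_V\subset V\times X^*\subset\operatorname{int}(\overline V\times X^*)$, the qualification condition for \cite[Theorem 2.8.7 (iii)]{MR1921556} is met and, using $\varphi_{T|_V}^\square=\psi_{T|_V}$ (valid since $T|_V\in\mathcal{M}(X)$) and $\sigma_{\overline V\times X^*}(y,y^*)=\iota_{\{0\}}(y)+\sigma_{\overline V}(y^*)$, one computes
\[
h^\square(x,x^*)=\min_{u^*\in X^*}\bigl\{\psi_{T|_V}(x,u^*)+\sigma_{\overline V}(x^*-u^*)\bigr\}.
\]
Take $z=(x,x^*)\in[\varphi_{T|_V}\le c]\cap\overline V\times X^*$. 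By Part~I, $\varphi_{T|_V}(z)=c(z)$, hence $h(z)=c(z)$, so Lemma \ref{h>c} gives $h^\square(z)=c(z)$. Let $v^*$ realize the minimum. Since $\psi_{T|_V}(x,v^*)\ge\langle x,v^*\rangle$ and $\sigma_{\overline V}(x^*-v^*)\ge\langle x,x^*-v^*\rangle$ (as $x\in\overline V$), both bounds are equalities: $(x,v^*)\in[\psi_{T|_V}=c]$ and $x^*-v^*\in N_{\overline V}(x)$. From $T|_V\subset T|_{\overline V}$ we get $\psi_{T|_V}\ge\psi_{T|_{\overline V}}\ge c$, so $(x,v^*)\in[\psi_{T|_{\overline V}}=c]\cap\overline V\times X^*$, which equals $T|_{\overline V}$ by the $\overline V$-representability of $T$ together with Theorem \ref{V-repres} and the remark following it (closed convex case). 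Hence $v^*\in Tx$ and $x^*=v^*+(x^*-v^*)\in(T+N_{\overline V})(x)$. The inclusion $T+N_{\overline V}\subset D(T)\times X^*$ is immediate because $D(T+N_{\overline V})\subset D(T)\cap\overline V$.

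Finally, for $V\subset S\subset\overline V$, the localization by $S$ follows from $T|_V\subset T|_S$: $[\varphi_{T|_S}\le c]\cap S\times X^*\subset[\varphi_{T|_V}\le c]\cap\overline V\times X^*\subset T+N_{\overline V}\subset D(T)\times X^*$. For identification of $T+N_{\overline V}$ by $S$, note that points of $V$ are in $\operatorname{int}\overline V$ so $N_{\overline V}$ vanishes there, giving $T|_V\subset(T+N_{\overline V})|_S$, hence $\varphi_{T|_V}\le\varphi_{(T+N_{\overline V})|_S}$, and the same chain of inclusions yields $[\varphi_{(T+N_{\overline V})|_S}\le c]\cap S\times X^*\subset T+N_{\overline V}$.

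The main obstacle is the passage inside Part~II from $(x,v^*)\in[\psi_{T|_V}=c]$ to membership in $T$: one must convert a statement about the representative of the \emph{smaller} restriction $T|_V$ into membership in the \emph{larger} representable object $T|_{\overline V}$, which only works because the monotonicity bound $\psi_{T|_{\overline V}}\ge c$ is sandwiched between $c$ and $\psi_{T|_V}$ and because $\overline V$ being closed convex lets Theorem \ref{V-repres} (and the remark following it) identify $[\psi_{T|_{\overline V}}=c]\cap\overline V\times X^*$ with $T|_{\overline V}$. The delicate verification of the qualification hypothesis for the conjugate sum formula is what makes the $\overline V$-representability assumption genuinely used.
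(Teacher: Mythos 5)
Your second block (the representable part) is essentially the paper's own argument: the same auxiliary function $h=\varphi_{T|_{V}}+\iota_{\overline{V}\times X^{*}}$, the same conjugation formula, the same identification of the minimizer, and the same passage from $[\psi_{T|_{V}}=c]$ to $T|_{\overline{V}}$ via $\overline{V}$-representability. The ``in particular'' consequences are also handled correctly. The problem is your first block.

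Your proof that $\varphi_{T|_{V}}\ge c$ on $\overline{V}\times X^{*}$ has a genuine gap. The convexity inequality $0\le g(x_{t})\le(1-t)g(x)+tg(x_{0})$ forces $g(x)\ge0$ in the limit $t\downarrow0$ only when $g(x_{0})<+\infty$, and nothing in your argument guarantees that $\varphi_{T|_{V}}(x_{0},x^{*})$ is finite for the \emph{given} $x^{*}$: the only points of $V\times X^{*}$ where finiteness of $\varphi_{T|_{V}}$ is known are the points of $\operatorname*{Graph}(T|_{V})$ (where it equals $c$ by monotonicity), and their second coordinates are dictated by $T$, not by the $x^{*}$ you are testing. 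The horizontal section $u\mapsto\varphi_{T|_{V}}(u,x^{*})$ can be identically $+\infty$ on $V$, in which case your inequality is vacuous. The tell-tale sign is that your first block never uses the hypothesis $T|_{V}\in\mathcal{M}(X)$ \textemdash{} and without it the statement is false. Take $X=\mathbb{R}^{2}$, $V=\{(s,t)\mid s<0\}$, and the (non-monotone) operator $T=\{((-1/n,0),(-n,0))\mid n\ge1\}$: then $\varphi_{T|_{V}}\equiv+\infty$ on $V\times X^{*}$, so $T$ is $V$-NI and every section $g$ satisfies $g\ge0$ on $V$, yet $\varphi_{T|_{V}}((0,0),(0,0))=\sup_{n}(-1)=-1<0=c((0,0),(0,0))$ at the boundary point $(0,0)\in\overline{V}$. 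Any correct proof must therefore use monotonicity to produce a point where $\varphi_{T|_{V}}$ is finite, and such a point lies in $\operatorname*{Graph}(T|_{V})\subset Z$, which forces the segment argument to run in $Z$ rather than in a horizontal slice. This is what the paper does: it joins $z=(x,x^{*})$ to $w=(y,y^{*})\in T|_{V}$, where $(\varphi_{T|_{V}}-c)(w)=0$, notes that $\varphi_{T|_{V}}$ is convex and finite at both endpoints of the segment while $c$ is a polynomial along it, and extracts $\delta\in(0,1)$ with $(\varphi_{T|_{V}}-c)(\delta w+(1-\delta)z)<0$ at a point of $V\times X^{*}$, contradicting $V$-NI. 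You should replace your section argument by this one; the rest of your write-up then stands.
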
 

\begin{proof} Seeking a contradiction assume that there is $z=(x,x^{*})\in[\varphi_{T|_{V}}<c]\cap\overline{V}\times X^{*}$.
Since $T$ is $V-$NI we know that $x\in\operatorname*{bd}V$.

Let $y\in V\cap D(T)$, $w=(y,y^{*})\in T$, and $h:[0,1]\rightarrow\mathbb{R}$,
$h(t)=(\varphi_{T|_{V}}-c)(tw+(1-t)z)$. Note that $h(0)<0$, $h(1)=0$,
since $w\in T|_{V}\in\mathcal{M}(X)$, and $h$ is continuous. Hence
there is $\delta\in(0,1)$ such that $h(\delta)<0$. That provides
the contradiction $\delta w+(1-\delta)z\in[\varphi_{T|_{V}}<c]\cap V\times X^{*}$. 

Note that $h:=\varphi_{T|_{V}}+\iota_{\overline{V}\times X^{*}}\in\mathscr{R}$,
$[h=c]=[\varphi_{T|_{V}}\le c]\cap\overline{V}\times X^{*}$, and
\[
h^{\square}(x,x^{*})=\min\{\psi_{T|_{V}}(x,u^{*})+\sigma_{\overline{V}}(x^{*}-u^{*})\mid u^{*}\in X^{*}\},\ (x,x^{*})\in X\times X^{*}.
\]
For every $z=(x,x^{*})\in[h^{\square}=c]\cap\overline{V}\times X^{*}$
there is $v^{*}\in X^{*}$ such that $\psi_{T|_{V}}(x,v^{*})+\sigma_{\overline{V}}(x^{*}-v^{*})=\langle x,x^{*}\rangle$.
This implies that $(x,v^{*})\in[\psi_{T|_{V}}=c]\cap\overline{V}\times X^{*}\subset[\psi_{T|_{\overline{V}}}=c]\cap\overline{V}\times X^{*}$
and $x^{*}-v^{*}\in N_{\overline{V}}(x)$. If, in addition, $T$ is
$\overline{V}-$representable then $[\psi_{T|_{\overline{V}}}=c]\cap\overline{V}\times X^{*}=T|_{\overline{V}}$
so $z\in T+N_{\overline{V}}$. Hence 
\[
[\varphi_{T|_{V}}\le c]\cap\overline{V}\times X^{*}=[h=c]\subset[h^{\square}=c]\cap\overline{V}\times X^{*}\subset T+N_{\overline{V}}\subset D(T)\times X^{*}.
\]
\end{proof} 

\begin{proposition} \label{locV->locVbar}Let $X$ be a LCS, let
$T:X\rightrightarrows X^{*}$, and let $V\subset X$ be open convex
such that $D(T)\cap V\neq\emptyset$ and $T$ is $\overline{V}-$representable.
If $V$ locates $T$ then, for every $V\subset S\subset\overline{V}$,
$S$ locates $T$ and identifies $T+N_{\overline{V}}$. \end{proposition}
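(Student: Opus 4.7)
The plan is to recognize this as a direct corollary of Proposition \ref{V->V bar}. The statement of that proposition already contains the desired conclusion ($S$ locates $T$ and $S$ identifies $T+N_{\overline{V}}$ for every $V\subset S\subset\overline{V}$) under the hypotheses: $V$ open convex, $D(T)\cap V\neq\emptyset$, $T|_V\in\mathcal{M}(X)$, $T$ is $V$-NI, and $T$ is $\overline{V}$-representable. Our task therefore reduces to extracting the hypotheses ``$T|_V\in\mathcal{M}(X)$'' and ``$T$ is $V$-NI'' from the given data ``$T$ is $\overline{V}$-representable'' and ``$V$ locates $T$''.

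First, I would observe that since $T$ is $\overline{V}$-representable, Theorem \ref{V-repres} yields $T|_{\overline{V}}\in\mathcal{M}(X)$. Restricting a monotone operator to a smaller set preserves monotonicity, and since $D(T)\cap V\neq\emptyset$ the restriction $T|_V$ is non-void; thus $T|_V\in\mathcal{M}(X)$. Second, since $V$ is open (in particular algebraically open) and locates $T$, Theorem \ref{V-locate}, specifically the implication (i) $\Rightarrow$ (ii), guarantees that $T$ is $V$-NI.

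With $V$ open convex, $D(T)\cap V\neq\emptyset$, $T|_V\in\mathcal{M}(X)$, $T$ $V$-NI, and $T$ additionally $\overline{V}$-representable, the full hypotheses of Proposition \ref{V->V bar} are in force. Its ``in particular'' clause now delivers exactly the conclusion sought: for every $V\subset S\subset\overline{V}$, the set $S$ both locates $T$ and identifies $T+N_{\overline{V}}$.

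There is essentially no technical obstacle here, since the heavy lifting was already done in Proposition \ref{V->V bar} (where the extension of the $V$-NI property to $\overline{V}\times X^*$ via a continuity argument along the segment, together with the minimum formula for the conjugate of $\varphi_{T|_V}+\iota_{\overline{V}\times X^*}$, were used). The only minor subtlety worth double-checking is that $\overline{V}$-representability descends to $V$-representability (this follows from Remark \ref{reprez}, since $V\subset\overline{V}$), which in turn guarantees $T|_V\in\mathcal{M}(X)$ as noted above.
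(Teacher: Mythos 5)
Your proof is correct and follows exactly the route the paper intends: the paper states this proposition without proof, treating it as an immediate corollary of Proposition \ref{V->V bar}, and your derivation of the missing hypotheses ($T|_{V}\in\mathcal{M}(X)$ from $\overline{V}$-representability via Theorem \ref{V-repres} or Remark \ref{reprez}, and $V$-NI from ``$V$ locates $T$'' via Theorem \ref{V-locate}) is precisely what is needed to invoke it.
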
 

\begin{remark} \label{Vbar->V?} \emph{Let $T=(0,1)\times\{0\}\subset\mathbb{R}^{2}$.
Then $V=(0,1)=D(T)$ is open convex and identifies $T$ (and, according
to Remark \ref{reprez}, that makes $T$ become $V-$representable)
while $\overline{V}=[0,1]$ does not locate $T$ since $z=(0,0)$
is m.r. to $T$ and $0\in\overline{V}\setminus D(T)$. }

\emph{This example shows the necessity of the $\overline{V}-$representability
condition in the previous two propositions and also, that this condition
cannot be replaced by $V-$representability. }\end{remark}

\begin{theorem} \label{open->closed}Let $X$ be a LCS and let $T:X\rightrightarrows X^{*}$. 

\medskip

\emph{(i)} $T$ is locally-NI iff, for every closed convex $C\subset X$
such that $D(T)\cap\operatorname*{int}C\neq\emptyset$, $T$ is $C-$NI.

\medskip

\emph{(ii)} $T$ is locatable iff, for every closed convex $C\subset X$
such that $D(T)\cap\operatorname*{int}C\neq\emptyset$, 
\[
[\varphi_{T|_{C}}\le c]\cap\operatorname*{int}C\times X^{*}\subset D(T)\times X^{*}.
\]
In particular if, for every closed convex $C\subset X$ such that
$D(T)\cap\operatorname*{int}C\neq\emptyset$, $C$ locates $T$ then
$T$ is locatable.

\medskip

\emph{(iii)} $T$ is identifiable iff, for every closed convex $C\subset X$
such that $D(T)\cap\operatorname*{int}C\neq\emptyset$,  
\[
[\varphi_{T|_{C}}\le c]\cap\operatorname*{int}C\times X^{*}\subset\operatorname*{Graph}(T),
\]
In particular if, for every closed convex $C\subset X$ such that
$D(T)\cap\operatorname*{int}C\neq\emptyset$, $C$ identifies $T+N_{C}$
then $T$ is identifiable. 

\medskip

\emph{(iv)} $T$ is monotone and identifiable iff $T$ is representable
and, for every closed convex $C\subset X$ with $D(T)\cap\operatorname*{int}C\neq\emptyset$,
$C$ locates $T$ iff $T$ is monotone low-representable and, for
every closed convex $C\subset X$ with $D(T)\cap\operatorname*{int}C\neq\emptyset$,
$T$ is $C-$NI.

\end{theorem}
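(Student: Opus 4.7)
The plan is to pair each open-convex-$V$ property (locally-NI, locatable, identifiable) with the corresponding closed-convex-$C$ property by exploiting two bridges. The first bridge, from an open convex $V$ to its closure $\overline V$, is supplied by Propositions \ref{V->V bar} and \ref{locV->locVbar}. The second bridge, from a closed convex $C$ back to an enclosing open convex $V$, is an LCS ``fattening'' construction: given $V$ open convex, $x\in V$, and $y\in V\cap D(T)$, compactness of the segment $[x,y]$ together with the existence of a basis of closed convex balanced $0$-neighborhoods yields such a $U$ with $[x,y]+U\subset V$; then $C:=[x,y]+U$ is closed convex with $\{x,y\}\subset\operatorname{int} C\subset C\subset V$. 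The monotonicity relation driving everything is $C\subset V\Rightarrow T|_C\subset T|_V\Rightarrow\varphi_{T|_C}\le\varphi_{T|_V}$, which also reverses to $[\varphi_{T|_C}\le c]\subset[\varphi_{T|_V}\le c]$.

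For (i) $\Rightarrow$: take $V:=\operatorname{int} C$, which is open convex and satisfies $V\cap D(T)\neq\emptyset$, so $T$ is $V$-NI by hypothesis, and Proposition \ref{V->V bar} propagates this to $\overline V=C$. For (i) $\Leftarrow$: for any $z=(x,x^{*})\in V\times X^{*}$, fatten $[x,y]$ (with $y\in V\cap D(T)$) to $C\subset V$; then $D(T)\cap\operatorname{int} C\ni y$, so $T$ is $C$-NI by hypothesis, giving $\varphi_{T|_V}(z)\ge\varphi_{T|_C}(z)\ge c(z)$. The same template drives (ii) and (iii): $\Rightarrow$ uses $V=\operatorname{int} C$ together with $[\varphi_{T|_C}\le c]\cap V\times X^{*}\subset[\varphi_{T|_V}\le c]\cap V\times X^{*}$ and the locating/identifying property of $V$; $\Leftarrow$ fattens to $C\subset V$ and applies the closed-set hypothesis directly at $z$. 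The ``in particular'' halves exploit $N_C(x)=\{0\}$ for $x\in\operatorname{int} C$ together with identity (\ref{tnc}), which imply $(T+N_C)\cap(\operatorname{int} C\times X^{*})=T\cap(\operatorname{int} C\times X^{*})$, reducing ``$C$ locates $T$'' or ``$C$ identifies $T+N_C$'' to the required inclusions over $\operatorname{int} C\times X^{*}$.

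For (iv), I combine the previous work with Theorems \ref{lo+re <-> id} and \ref{LowRep}: $T$ monotone and identifiable is equivalent both to ``$T$ representable and locatable'' and to ``$T$ monotone low-representable and locally-NI''. Part (i) then converts locally-NI into the $C$-NI condition, yielding the third characterization in (iv). For the middle characterization, the $\Leftarrow$ direction follows from the ``in particular'' clause of (ii), while the $\Rightarrow$ direction applies Proposition \ref{locV->locVbar} with $V=\operatorname{int} C$ and $\overline V=C$, the $C$-representability hypothesis being automatic from representability of $T$.

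The main (modest) obstacle is dual: (a) verifying the fattening construction in a general LCS, which is routine once one picks a closed convex balanced $0$-neighborhood via the LCS neighborhood basis and invokes compactness of $[x,y]$; and (b) observing that the monotonicity hypothesis $T|_V\in\mathcal M(X)$ in Proposition \ref{V->V bar} is inessential for what I need here, since the continuity-along-segments argument only relies on $\varphi_{T|_V}(w)\ge c(w)$ for a single $w\in T|_V$, which is provided by Theorem \ref{propr}(i) without assuming monotonicity of $T$. This allows (i) to be stated and proved for arbitrary $T:X\rightrightarrows X^{*}$.
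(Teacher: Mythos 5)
Your overall route coincides with the paper's: the same fattening construction $C:=[x,y]+U\subset V$ with $x,y\in\operatorname*{int}C$ (compactness of $[x,y]$ plus a closed convex $0$-neighborhood), the same use of Proposition \ref{V->V bar} to pass from $\operatorname*{int}C$ to $C=\operatorname*{cl}(\operatorname*{int}C)$ in the forward directions, the same comparison of Fitzpatrick functions of restrictions in the reverse directions, and the same reduction of (iv) to Theorems \ref{lo+re <-> id} and \ref{LowRep} (the paper routes the middle equivalence through Theorem \ref{C-amt} where you use Proposition \ref{locV->locVbar}; both work, the latter being itself a consequence of \ref{V->V bar}). One slip in your preamble: for $C\subset V$ the inequality $\varphi_{T|_{C}}\le\varphi_{T|_{V}}$ yields $[\varphi_{T|_{V}}\le c]\subset[\varphi_{T|_{C}}\le c]$, not the reverse inclusion you state; your actual applications use the inclusions in the correct directions, so this is only a misstatement.

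The genuine problem is your claim (b). In the proof of Proposition \ref{V->V bar} the hypothesis $T|_{V}\in\mathcal{M}(X)$ is not there to give $h(1)\ge0$; it gives $h(1)=0$, i.e. $\varphi_{T|_{V}}(w)=c(w)<+\infty$ for $w\in T|_{V}$ (Theorem \ref{propr}(ii)). Finiteness of the convex lsc function $\varphi_{T|_{V}}$ at the endpoint $w$, together with its finiteness at $z$, is what makes $h$ continuous (in particular right-continuous at $0$), and that continuity is precisely the step that produces a point of $[\varphi_{T|_{V}}<c]$ inside $V\times X^{*}$ and hence the contradiction with $V$-NI. The one-sided bound $\varphi_{T|_{V}}(w)\ge c(w)$ supplied by Theorem \ref{propr}(i) points the wrong way: for a non-monotone $T$ it is perfectly compatible with $\varphi_{T|_{V}}(w)=+\infty$ (e.g. $T|_{V}=\{(0,0)\}\cup\{(1,-n)\mid n\in\mathbb{N}\}$ in $\mathbb{R}\times\mathbb{R}$ gives $\varphi_{T|_{V}}(0,0)=+\infty$), in which case $h$ may equal $+\infty$ on all of $(0,1]$ and no $\delta$ with $h(\delta)<0$ need exist. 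What the segment argument really needs is some point of $V\times X^{*}$ at which $\varphi_{T|_{V}}$ is finite, and your justification does not supply one. So the extension of (i)($\Rightarrow$) to arbitrary $T$ is not established as written. (To be fair, the paper's own proof of (i)($\Rightarrow$) also invokes Proposition \ref{V->V bar} without verifying its hypothesis $T|_{\operatorname*{int}C}\in\mathcal{M}(X)$, so you have identified a real issue; but your proposed repair does not close it.)
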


\begin{proof} First we prove that for every open convex $V\subset X$
such that $D(T)\cap V\neq\emptyset$ and for every $x\in V$ there
is a closed convex $C\subset V$ such that $D(T)\cap\operatorname*{int}C\neq\emptyset$
and $x\in\operatorname*{int}C$. Indeed, take $y\in D(T)\cap V$ and
a closed convex $U\in\mathscr{{V}}(0)$ such that $C:=[x,y]+U\subset V$.
Note that $C$ is closed convex and $y\in D(T)\cap\operatorname*{int}C$.
The last inclusion is possible since if we assume the opposite, namely
that for every $U\in\mathscr{{V}}(0)$ there is $x_{U}\in([x,y]+U)\setminus V$,
that is, $x_{U}-y_{U}\in U$ for some $y_{U}\in[x,y]$, because $[x,y]$
is compact, on a subnet, denoted by the same index for notation simplicity,
$y_{U}\to y\in[x,y]\subset V$ and so we reach the contradiction $x_{U}\to y\not\in V$. 

(i) ($\Rightarrow$) For every closed convex $C\subset X$ such that
$D(T)\cap\operatorname*{int}C\neq\emptyset$, $T$ is $\operatorname*{int}C-$NI.
According to Proposition \ref{V->V bar}, $T$ is also $C=\operatorname*{cl}(\operatorname*{int}C)-$NI. 

($\Leftarrow$) For every $V\subset X$ open convex such that $D(T)\cap V\neq\emptyset$
and every $z=(x,x^{*})\in V\times X^{*}$ let $C\subset V$ be closed
convex such that $D(T)\cap\operatorname*{int}C\neq\emptyset$ and
$x\in C$. Since $T$ is $C-$NI and $T|_{C}\subset T|_{V}$ we get
$\varphi_{T|_{V}}(z)\ge\varphi_{T|_{C}}(z)\ge c(z)$, i.e., $T$ is
locally-NI. 

(ii) ($\Rightarrow$) For every closed convex $C\subset X$ such that
$D(T)\cap\operatorname*{int}C\neq\emptyset$, we have $[\varphi_{T|_{C}}\le c]\cap\operatorname*{int}C\times X^{*}\subset[\varphi_{T|_{\operatorname*{int}C}}\le c]\cap\operatorname*{int}C\times X^{*}\subset D(T)\times X^{*}$
since $\operatorname*{int}C$ locates $T$. 

($\Leftarrow$) For every $V\subset X$ open convex such that $D(T)\cap V\neq\emptyset$
and every $z=(x,x^{*})\in[\varphi_{T|_{V}}\le c]\cap V\times X^{*}$
let $C\subset V$ be closed convex such that $D(T)\cap\operatorname*{int}C\neq\emptyset$
and $x\in\operatorname*{int}C$. Then $z\in[\varphi_{T|_{C}}\le c]\cap\operatorname*{int}C\times X^{*}\subset D(T)\times X^{*}$.
This yields that $[\varphi_{T|_{V}}\le c]\cap V\times X^{*}\subset D(T)\times X^{*}$,
that is, $V$ locates $T$. 

The proof of (iii) is similar to the argument used for (ii). In particular
if, for every closed convex $C\subset X$ such that $D(T)\cap\operatorname*{int}C\neq\emptyset$,
$C$ identifies $T+N_{C}$ then for every $V\subset X$ open convex
such that $D(T)\cap V\neq\emptyset$ and every $z=(x,x^{*})\in[\varphi_{T|_{V}}\le c]\cap V\times X^{*}$
let $C\subset V$ be closed convex such that $D(T)\cap\operatorname*{int}C\neq\emptyset$
and $x\in\operatorname*{int}C$. Hence $z\in[\varphi_{T|_{C}}\le c]\cap\operatorname*{int}C\times X^{*}=[\varphi_{T+N_{C}}\le c]\cap\operatorname*{int}C\times X^{*}\subset\operatorname*{Graph}(T)$. 

Subpoint (iv) is a direct consequence of (i) and Theorems \ref{lo+re <-> id},
\ref{LowRep}, \ref{C-amt}. \end{proof} 

\eject

\section{Representability via the convolution operation}

The goal of this section is to study, in the context of locally convex
spaces, the representability of the sum $A+B$ of two representable
operators $A$, $B$ under classical qualification constraints. Under
a Banach space settings, the calculus rules of representable operators
can be found in \cite[Section\ 5]{MR2453098}.  The following result
holds

\begin{proposition} \label{zali1} (Z\u{a}linescu \cite[Proposition 1]{MR865564})
Let $X_{1}$, $X_{2}$ be LCS's and let $f_{1},f_{2}:X=X_{1}\times X_{2}\to\overline{\mathbb{R}}$
be proper convex functions. If there exists $(x_{1},x_{2})\in\operatorname*{dom}f_{1}\cap\operatorname*{dom}f_{2}$
such that $f_{1}(\cdot,x_{2})$ is continuous at $x_{1}$ and $f_{2}(x_{1},\cdot)$
is continuous at $x_{2}$ then, for every $x^{*}\in X^{*}=X_{1}^{*}\times X_{2}^{*}$
\[
(f_{1}+f_{2})^{*}(x^{*})=\min\{f_{1}^{*}(u^{*})+f_{2}^{*}(x^{*}-u^{*})\mid u^{*}\in X^{*}\}.
\]
Here ``$\min$'' stands for an infimum that is attained when finite.
\end{proposition}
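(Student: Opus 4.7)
The plan is to establish the nontrivial direction of the identity via a Fenchel--Rockafellar-style argument after a normalization, using the split continuity hypothesis to secure the local upper bound on an infimal convolution. The easy direction, $(f_{1}+f_{2})^{*}(x^{*}) \leq f_{1}^{*}(u^{*}) + f_{2}^{*}(x^{*}-u^{*})$ for every $u^{*} \in X^{*}$, follows immediately by splitting $\langle x, x^{*}\rangle = \langle x, u^{*}\rangle + \langle x, x^{*}-u^{*}\rangle$ inside the supremum that defines $(f_{1}+f_{2})^{*}$. Fix $x^{*} \in X^{*}$ and assume $\alpha := (f_{1}+f_{2})^{*}(x^{*}) < \infty$ (otherwise the claim is trivial). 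Replacing $f_{1}$ by $f_{1} - \langle \cdot, x^{*}\rangle$ and absorbing a constant reduces the problem to the case $x^{*} = 0$ and $\inf_{X}(f_{1}+f_{2}) = 0$. What must be produced is then a $u^{*} \in X^{*}$ with $\langle x - y, u^{*}\rangle \leq f_{1}(x) + f_{2}(y)$ for all $x, y \in X$.

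Introduce the convex value function
\[
p(z) := \inf\{f_{1}(x) + f_{2}(y) \mid x, y \in X,\ x - y = z\},
\]
so $p(0) = 0$ and finding the required $u^{*}$ is equivalent to exhibiting $u^{*} \in \partial p(0)$ (here the subdifferential is with respect to the duality $(X, X^{*})$). In a LCS the standard sufficient condition for $\partial p(0) \neq \emptyset$ is that $p$ be finite at $0$ and bounded above on some neighborhood of $0$, since this forces $p$ to be continuous at $0$. Producing such a subgradient automatically yields the ``$\min$'' clause, because the candidate $u^{*}$ realizes the infimum on the right-hand side of the asserted formula.

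The crucial step, and the only place where the split continuity enters, is the local upper bound on $p$. For $z = (z_{1}, z_{2}) \in X_{1} \times X_{2}$, I would use the asymmetric decomposition
\[
(z_{1}, z_{2}) = (x_{1} + z_{1},\, x_{2}) - (x_{1},\, x_{2} - z_{2}),
\]
which feeds each function only through its continuous variable, giving
\[
p(z_{1}, z_{2}) \leq f_{1}(x_{1} + z_{1}, x_{2}) + f_{2}(x_{1}, x_{2} - z_{2}).
\]
Continuity of $f_{1}(\cdot, x_{2})$ at $x_{1}$ furnishes a neighborhood $U_{1} \in \mathscr{V}(0)$ in $X_{1}$ on which $z_{1} \mapsto f_{1}(x_{1} + z_{1}, x_{2})$ is bounded; symmetrically $z_{2} \mapsto f_{2}(x_{1}, x_{2} - z_{2})$ is bounded on some $U_{2} \in \mathscr{V}(0)$ in $X_{2}$. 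Hence $p$ is bounded above on $U_{1} \times U_{2}$, finishing the argument.

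The main obstacle is precisely this decomposition. The hypothesis provides continuity of $f_{1}$ and $f_{2}$ only along complementary directions, so the usual symmetric approach -- available when one of the two functions is jointly continuous at a point of the other's domain -- breaks down, and one must steer the perturbation $z_{1}$ into the continuous slot of $f_{1}$ while simultaneously steering $z_{2}$ into the continuous slot of $f_{2}$, all while maintaining $x - y = z$. Once the bound on $p$ is in hand, the continuity of $p$ at $0$ and the nonemptiness of $\partial p(0)$ deliver the required $u^{*}$, and undoing the normalizations yields the stated formula with the minimum attained.
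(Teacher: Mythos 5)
Your argument is correct: the easy inequality, the reduction to $x^{*}=0$, the value function $p(z)=\inf\{f_{1}(x)+f_{2}(y)\mid x-y=z\}$, the asymmetric decomposition $(z_{1},z_{2})=(x_{1}+z_{1},x_{2})-(x_{1},x_{2}-z_{2})$ that feeds each perturbation into the continuous slot of the corresponding function, and the conclusion $\partial p(0)\neq\emptyset$ (which also yields attainment of the minimum) all check out. The paper itself gives no proof of this proposition --- it is imported verbatim from Z\u{a}linescu's cited note --- and your argument is the standard perturbation-function proof of that result, so there is nothing further to compare.
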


\begin{theorem} \label{zali2} Let $E$, $F$ be LCS's and let $\phi_{1},\phi_{2}:E\times F\to\overline{\mathbb{R}}$
be proper convex functions. Consider $\rho:E\times F\to\overline{\mathbb{R}}$
defined by
\[
\rho(x,y):=\inf\{\phi_{1}(x,y_{1})+\phi_{2}(x,y_{2})\mid y_{1}+y_{2}=y\}.
\]

Assume that there exists $(x_{0},y_{0})\in\operatorname*{dom}\phi_{2}$
such that $x_{0}\in\operatorname*{Pr}{}_{E}(\operatorname*{dom}\phi_{1})$
and $\phi_{2}(\cdot,y_{0})$ is continuous at $x_{0}$. Then, for
every $x^{*}\in E^{*}$, $y^{*}\in F^{*}$

\[
\rho^{*}(x^{*},y^{*})=\min\{\phi_{1}^{*}(x_{1}^{*},y^{*})+\phi_{2}^{*}(x_{2}^{*},y^{*})\mid x_{1}^{*}+x_{2}^{*}=x^{*}\in X^{*}\}.
\]
\end{theorem}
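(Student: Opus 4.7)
The plan is to recognise $\rho$ as the infimal projection of an auxiliary convex function on a larger space, express $\rho^{\ast}$ as a conjugate evaluated along a diagonal, and then apply Proposition \ref{zali1} after an appropriate splitting of variables. First I would introduce $\tilde{f}:E\times F\times F\to\overline{\mathbb{R}}$, $\tilde{f}(x,y_1,y_2):=\phi_1(x,y_1)+\phi_2(x,y_2)$, together with the continuous linear map $L:E\times F\times F\to E\times F$, $L(x,y_1,y_2):=(x,y_1+y_2)$, so that $\rho(x,y)=\inf\{\tilde{f}(\xi)\mid L\xi=(x,y)\}$. A routine exchange of suprema in the definition of the conjugate then gives, with no extra hypothesis,
\[
\rho^{\ast}(x^{\ast},y^{\ast})=\tilde{f}^{\ast}(L^{\ast}(x^{\ast},y^{\ast}))=\tilde{f}^{\ast}(x^{\ast},y^{\ast},y^{\ast}).
\]

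Next I would split $\tilde{f}=F_1+F_2$ on the product $F\times(E\times F)$ via the reordering $(x,y_1,y_2)\leftrightarrow(y_2,(x,y_1))$, where $F_1(y_2,(x,y_1)):=\phi_1(x,y_1)$ is constant in $y_2$ and $F_2(y_2,(x,y_1)):=\phi_2(x,y_2)$ is constant in $y_1$. With this decomposition the hypotheses of Proposition \ref{zali1} are satisfied using only the data of the theorem: continuity of $F_1(\cdot,(x_0,y_1^0))$ at $y_2^0=y_0$ is automatic because $F_1$ is constant in $y_2$, while continuity of $F_2(y_0,\cdot)$ at $(x_0,y_1^0)$ reduces to continuity of $x\mapsto\phi_2(x,y_0)$ at $x_0$, which is exactly the assumption. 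Picking $y_1^0\in F$ with $(x_0,y_1^0)\in\operatorname*{dom}\phi_1$, possible because $x_0\in\operatorname*{Pr}_{E}(\operatorname*{dom}\phi_1)$, supplies the required common domain point $(y_0,(x_0,y_1^0))\in\operatorname*{dom}F_1\cap\operatorname*{dom}F_2$.

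A direct computation gives $F_1^{\ast}(a^{\ast},(b^{\ast},c^{\ast}))=\phi_1^{\ast}(b^{\ast},c^{\ast})+\iota_{\{0\}}(a^{\ast})$ and $F_2^{\ast}(a^{\ast},(b^{\ast},c^{\ast}))=\phi_2^{\ast}(b^{\ast},a^{\ast})+\iota_{\{0\}}(c^{\ast})$. Plugging these into the conclusion of Proposition \ref{zali1}, the two indicator penalties force $a^{\ast}=0$ and $c^{\ast}=y_1^{\ast}$ in the minimisation, so that
\[
\tilde{f}^{\ast}(y_2^{\ast},(x^{\ast},y_1^{\ast}))=\min_{u^{\ast}\in E^{\ast}}\{\phi_1^{\ast}(u^{\ast},y_1^{\ast})+\phi_2^{\ast}(x^{\ast}-u^{\ast},y_2^{\ast})\},
\]
with the min attained whenever finite. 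Evaluating at $(y_2^{\ast},(x^{\ast},y_1^{\ast}))=(y^{\ast},(x^{\ast},y^{\ast}))$ and renaming $u^{\ast}=x_1^{\ast}$, $x^{\ast}-u^{\ast}=x_2^{\ast}$ delivers the claimed identity.

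The main obstacle is choosing the product decomposition so that the one-sided continuity hypothesis of Proposition \ref{zali1} can be secured from the one-sided continuity of $\phi_2$ alone. Placing $y_2$ in its own block (on which $F_1$ is constant) and grouping $(x,y_1)$ together (on which $F_2$ is constant in $y_1$ and continuous in $x$ at $x_0$) is the ingredient that converts the given scalar continuity into the joint continuity required by Proposition \ref{zali1}, all while avoiding any continuity assumption on $\phi_1$.
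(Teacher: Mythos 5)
Your argument is correct and rests on the same key lemma as the paper (Proposition \ref{zali1}), with the same underlying trick of grouping the three variables $(x,y_{1},y_{2})$ into two blocks so that the single continuity hypothesis on $\phi_{2}(\cdot,y_{0})$ supplies the needed interiority; but the packaging is genuinely different. The paper fixes $(x^{*},y^{*})$, absorbs the linear form $-\langle x,x^{*}\rangle-\langle y+z,y^{*}\rangle$ into the second summand $f_{2}$ on the grouping $(E\times F)\times F$, and reads off $\rho^{*}(x^{*},y^{*})$ as $(f_{1}+f_{2})^{*}(0)$; this obliges it to prove the easy inequality ``$\le$'' by Fenchel--Young, to check that $\rho^{*}>-\infty$, and to dispose of the case $\rho^{*}(x^{*},y^{*})=+\infty$ separately before invoking Proposition \ref{zali1}. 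You instead factor $\rho$ as the infimal image of $\tilde{f}$ under the summation map $L$ and use the hypothesis-free identity $(L\tilde{f})^{*}=\tilde{f}^{*}\circ L^{*}$, then apply Proposition \ref{zali1} once, to a decomposition $\tilde{f}=F_{1}+F_{2}$ on $F\times(E\times F)$ that does not depend on $(x^{*},y^{*})$, and finally restrict to the diagonal $y_{1}^{*}=y_{2}^{*}=y^{*}$. This collapses all the case distinctions into one application of the lemma, isolates the only analytic step from the purely formal ones, and yields as a by-product the slightly more general formula for $\tilde{f}^{*}(x^{*},y_{1}^{*},y_{2}^{*})$ with independent dual variables $y_{1}^{*},y_{2}^{*}$. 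Your verifications (the conjugates of $F_{1},F_{2}$, the continuity of the constant block at a point of $\operatorname*{dom}\phi_{1}$, the choice of the common domain point $(y_{0},(x_{0},y_{1}^{0}))$, and the forcing $a^{*}=0$, $c^{*}=y_{1}^{*}$ by the indicator terms) are all sound.
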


\begin{proof} Note first that $\operatorname*{dom}\rho\neq\emptyset$
because there is $\tilde{y}\in F$ such that $(x_{0},\tilde{y})\in\operatorname*{dom}\phi_{1}$,
so $\rho(x_{0},\tilde{y}+y_{0})\le\phi_{1}(x_{0},\tilde{y})+\phi_{2}(x_{0},y_{0})<+\infty$;
whence $\rho^{*}$ does not take the value $-\infty$. 

For every $x\in E$, $y_{1},y_{2}\in F$, $x_{1}^{*},x_{2}^{*}\in E^{*}$,
$y^{*}\in F^{*}$ we have
\[
\phi_{1}(x,y_{1})+\phi_{2}(x,y_{2})+\phi_{1}^{*}(x_{1}^{*},y^{*})+\phi_{2}^{*}(x_{2}^{*},y^{*})\ge\langle x,x_{1}^{*}+x_{2}^{*}\rangle+\langle y_{1}+y_{2},y^{*}\rangle,
\]
so, for every $(x,y)\in E\times F$, $x_{1}^{*},x_{2}^{*}\in E^{*}$,
$y^{*}\in F^{*}$, 
\[
\rho(x,y)+\phi_{1}^{*}(x_{1}^{*},y^{*})+\phi_{2}^{*}(x_{2}^{*},y^{*})\ge\langle x,x_{1}^{*}+x_{2}^{*}\rangle+\langle y,y^{*}\rangle,
\]
from which, for every $x_{1}^{*},x_{2}^{*}\in E^{*}$, $y^{*}\in F^{*}$,
$\phi_{1}^{*}(x_{1}^{*},y^{*})+\phi_{2}^{*}(x_{2}^{*},y^{*})\ge\rho^{*}(x_{1}^{*}+x_{2}^{*},y^{*})$.
Hence
\[
\forall(x^{*},y^{*})\in E^{*}\times F^{*},\ \rho^{*}(x^{*},y^{*})\le\inf\{\phi_{1}^{*}(x_{1}^{*},y^{*})+\phi_{2}^{*}(x_{2}^{*},y^{*})\mid x_{1}^{*}+x_{2}^{*}=x^{*}\in X^{*}\}.
\]

If $\rho^{*}(x^{*},y^{*})=+\infty$ the conclusion holds. 

If $\rho^{*}(x^{*},y^{*})\in\mathbb{R}$ consider $f_{1},f_{2}:(E\times F)\times F\to\overline{\mathbb{R}}$
given by $f_{1}(x,y;z)=\phi_{2}(x,z)$, $f_{2}(x,y;z)=\phi_{1}(x,y)-\langle x,x^{*}\rangle-\langle y+z,y^{*}\rangle$.
Notice that, for every $u^{*}\in E^{*}$, $v^{*},z^{*}\in F^{*}$
\[
\begin{array}{ll}
f_{1}^{*}(u^{*},v^{*};z^{*})= & \phi_{2}(u^{*},z^{*})+\iota_{\{0\}}(v^{*})\\
f_{2}^{*}(u^{*},v^{*};z^{*})= & \phi_{1}(x^{*}+u^{*},y^{*}+v^{*})+\iota_{\{0\}}(y^{*}+z^{*}),
\end{array}
\]
\[
\begin{array}{ll}
(f_{1}+f_{2})^{*}(0) & =-\inf\{\phi_{1}(x,y)+\phi_{2}(x,z)-\langle x,x^{*}\rangle-\langle y+z,y^{*}\rangle\mid x\in E,y,z\in F\}\\
 & =-\inf\{\rho(x,v)-\langle x,x^{*}\rangle-\langle v,y^{*}\rangle\mid x\in E,v\in F\}=\rho^{*}(x^{*},y^{*})\in\mathbb{R}.
\end{array}
\]

Let $\zeta_{1}:=(x_{0},\tilde{y})\in\operatorname*{dom}\phi_{1}$
and let $\zeta_{2}=y_{0}$. Then $(\zeta_{1},\zeta_{2})\in\operatorname*{dom}f_{1}\cap\operatorname*{dom}f_{2}$,
$\zeta=(x,y)\to f_{1}(x,y;\zeta_{2})=\phi_{2}(x,y_{0})$ is continuous
at $\zeta_{1}$, and $f_{2}(x_{0},\tilde{y};\cdot)$ is continuous
at $\zeta_{2}$. From Proposition \ref{zali1} we obtain $u^{*}\in E^{*}$,
$v^{*},z^{*}\in F^{*}$ such that $\rho^{*}(x^{*},y^{*})=f_{1}^{*}(u^{*},v^{*};z^{*})+f_{2}^{*}(-u^{*},-v^{*};-z^{*})$,
i.e., $v^{*}=0$, $z^{*}=y^{*}$ and $\rho^{*}(x^{*},y^{*})=\phi_{1}(x^{*}-u^{*},y^{*})+\phi_{2}(u^{*},y^{*})$.
\end{proof}

\begin{theorem} \label{A+NC repres} Let $X$ be a LCS, let $A:X\rightrightarrows X^{*}$
be representable, and let $C\subset X$ be closed convex. If $D(A)\cap\operatorname*{int}C\neq\emptyset$
then $A+N_{C}$ is representable. \end{theorem}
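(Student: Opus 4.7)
The plan is to construct an explicit representative of $A+N_{C}$ as the partial infimal convolution in the $X^{*}$-variable of a representative of $A$ with the canonical representative of $N_{C}$, and then to use Theorem \ref{zali2} twice to show this convolution is already $\tau\times\omega^{*}$-lsc convex (so no further closure is needed). Fix a representative $h\in\mathscr{R}_{A}$ and set $g(x,x^{*}):=\iota_{C}(x)+\sigma_{C}(x^{*})$. Since $C$ is closed convex, $g$ is $\tau\times\omega^{*}$-lsc convex with $g\geq c$ and $[g=c]=\operatorname*{Graph}N_{C}$; moreover the bipolar identities $\sigma_{C}^{*}=\iota_{C}$ and $\iota_{C}^{*}=\sigma_{C}$ give $g^{\square}=g$, so $g\in\mathscr{R}_{N_{C}}\cap\mathscr{D}$. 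Define
\[
\rho(x,x^{*}):=\inf\{h(x,u^{*})+g(x,v^{*})\mid u^{*}+v^{*}=x^{*}\}.
\]
Two direct inequalities follow: $\rho\geq c$ (for $x\in C$, combine $h\geq c$ with $\sigma_{C}(x^{*}-u^{*})\geq\langle x,x^{*}-u^{*}\rangle$) and $\rho(x,x^{*})=c(x,x^{*})$ on $\operatorname*{Graph}(A+N_{C})$ (decompose $x^{*}=a^{*}+n^{*}$ with $a^{*}\in A(x)$, $n^{*}\in N_{C}(x)$ and plug $u^{*}=a^{*}$ into the inf); so $\operatorname*{Graph}(A+N_{C})\subset[\rho=c]$.

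Apply Theorem \ref{zali2} with $E=X$, $F=X^{*}$, $\phi_{1}=h$, $\phi_{2}=g$. Choosing $x_{0}\in D(A)\cap\operatorname*{int}C$ and $y_{0}=0$, the qualification holds: $x_{0}\in\operatorname*{Pr}_{X}\operatorname*{dom}h$ since $D(A)\subset\operatorname*{Pr}_{X}\operatorname*{dom}h$, and $g(\cdot,0)=\iota_{C}$ is continuous at $x_{0}\in\operatorname*{int}C$. Translating via $f^{\square}(y,y^{*})=f^{*}(y^{*},y)$ yields
\[
\rho^{\square}(y,x^{*})=\iota_{C}(y)+\min\{h^{\square}(y,u^{*})+\sigma_{C}(x^{*}-u^{*})\mid u^{*}\in X^{*}\}
\]
with the min attained when finite. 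Now apply Theorem \ref{zali2} a second time, regarding $\rho^{\square}$ as a partial inf-convolution of $h^{\square}$ and $g^{\square}=g$ in the second slot. The same $x_{0}$ still qualifies because $A\subset[h^{\square}=c]\subset\operatorname*{dom}h^{\square}$ by Lemma \ref{h>c}, giving $x_{0}\in\operatorname*{Pr}_{X}\operatorname*{dom}h^{\square}$. Since $h^{\square\square}=h$ (as $h\in\Gamma_{\tau\times\omega^{*}}$) and $g^{\square\square}=g$, the second application produces
\[
\rho^{\square\square}(x,x^{*})=\iota_{C}(x)+\min\{h(x,u^{*})+\sigma_{C}(x^{*}-u^{*})\mid u^{*}\in X^{*}\},
\]
again with attained min. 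The right-hand side is literally the infimum defining $\rho$, and since $\rho^{\square\square}\leq\rho$ always, we conclude $\rho=\rho^{\square\square}$; thus $\rho$ is $\tau\times\omega^{*}$-lsc convex and the infimum in its definition is attained when finite.

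It remains to verify $[\rho=c]\subset\operatorname*{Graph}(A+N_{C})$. Given $(x,x^{*})\in[\rho=c]$, the attainment yields $u^{*}\in X^{*}$ with $h(x,u^{*})+\sigma_{C}(x^{*}-u^{*})=\langle x,x^{*}\rangle$; summing the inequalities $h(x,u^{*})\geq\langle x,u^{*}\rangle$ and $\sigma_{C}(x^{*}-u^{*})\geq\langle x,x^{*}-u^{*}\rangle$ (the latter from $x\in C$) forces both to be equalities, so $(x,u^{*})\in[h=c]=\operatorname*{Graph}A$ and $x^{*}-u^{*}\in N_{C}(x)$; hence $(x,x^{*})\in\operatorname*{Graph}(A+N_{C})$. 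Thus $\rho\in\mathscr{R}_{A+N_{C}}$, proving $A+N_{C}$ is representable. The main obstacle lies in the second application of Theorem \ref{zali2}: one must recognise that $g^{\square}=g$ makes $\rho^{\square}$ itself a partial inf-convolution of the same type, and use Lemma \ref{h>c} to transfer the interiority qualification to $\operatorname*{dom}h^{\square}$. Without this self-conjugacy of $g$ and the biconjugate collapse $h^{\square\square}=h$, the $\tau\times\omega^{*}$-lsc hull of $\rho$ might be strictly smaller than $\rho$ and the key identification $[\rho=c]=\operatorname*{Graph}(A+N_{C})$ would be blocked.
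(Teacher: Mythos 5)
Your proof is correct, and it reaches the paper's conclusion by a noticeably different route through the same key tool. The paper applies Theorem \ref{zali2} \emph{once}, taking $\phi_{1}=\varphi_{A}$ and $\phi_{2}=\iota_{C}+\sigma_{C}$, and then declares the conjugate $\rho^{\square}(x,x^{*})=\min\{\psi_{A}(x,u^{*})+\iota_{C}(x)+\sigma_{C}(x^{*}-u^{*})\mid u^{*}\in X^{*}\}$ to be the representative: lower semicontinuity comes for free because $\rho^{\square}$ is a conjugate, and the identification $[\rho^{\square}=c]=\operatorname*{Graph}(A+N_{C})$ uses the attained minimum together with $[\psi_{A}=c]=A$, i.e.\ the characterization of representability via $\psi_{A}$ (Theorem \ref{V-repres}). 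You instead start from an arbitrary representative $h\in\mathscr{R}_{A}$, and apply Theorem \ref{zali2} \emph{twice} --- exploiting $g^{\square}=g$ and $h^{\square\square}=h$, and transferring the interiority qualification to $\operatorname*{dom}h^{\square}$ via Lemma \ref{h>c} --- to show that the partial inf-convolution $\rho$ is already $\tau\times\omega^{*}$-lsc and exact, and then use $\rho$ itself as the representative. What your version buys is independence from the Penot/Fitzpatrick pair: it shows that the inf-convolution of \emph{any} representative of $A$ with $\iota_{C}+\sigma_{C}$ is exact and closed under the stated interiority condition, and it identifies $[\rho=c]$ directly from $[h=c]=A$ without invoking Theorem \ref{V-repres}. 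What the paper's version buys is brevity: one conjugation suffices, and no biconjugate collapse needs to be checked. Both arguments verify the qualification hypothesis of Theorem \ref{zali2} in the same way (continuity of $\iota_{C}$ at $x_{0}\in D(A)\cap\operatorname*{int}C$ in the first slot), so the essential analytic content is shared.
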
 

\begin{proof} Let $x_{0}\in D(A)\cap\operatorname*{int}C$, $a_{0}^{*}\in Ax_{0}$.
We apply the previous theorem for $E=X$, $F=(X^{*},w^{*})$, $\phi_{1}=\varphi_{A}$,
$\phi_{2}(x,x^{*})=\iota_{C}(x)+\sigma_{C}(x^{*})$, $(x,x^{*})\in X\times X^{*}$,
and $y_{0}=0\in\operatorname*{dom}\sigma_{C}$ to get that 
\[
\rho(x,x^{*}):=\inf\{\varphi_{A}(x,x_{1}^{*})+\iota_{C}(x)+\sigma_{C}(x_{2}^{*})\mid x_{1}^{*}+x_{2}^{*}=x^{*}\},\ (x,x^{*})\in X\times X^{*},
\]
has
\[
\rho^{\square}(x,x^{*}):=\min\{\psi_{A}(x,x_{1}^{*})+\iota_{C}(x)+\sigma_{C}(x_{2}^{*})\mid x_{1}^{*}+x_{2}^{*}=x^{*}\},\ (x,x^{*})\in X\times X^{*}.
\]
which is a representative of $A+N_{C}$. \end{proof}

\begin{theorem} \label{A+B repres} Let $X$ be a barreled LCS and
let $A,B:X\rightrightarrows X^{*}$ be representable such that $D(A)\cap\operatorname*{int}D(B)\neq\emptyset$.
For every $x\in X$, there exist $V\in\mathscr{{V}}(x)$, such that
$A+B$ is $V-$representable. In particular, $A+B$ is low-representable.
\end{theorem}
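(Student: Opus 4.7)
The plan is to establish the stronger conclusion that $A+B$ is \emph{globally} representable, from which the stated result follows immediately with $V=X\in\mathscr{V}(x)$ for every $x\in X$, and low-representability is a direct consequence. Fix $x_0\in D(A)\cap\operatorname*{int}D(B)$ and choose $a_0^{\ast}\in Ax_0$, $b_0^{\ast}\in Bx_0$. I would apply Theorem \ref{zali2} with $E=X$, $F=(X^{\ast},w^{\ast})$, $\phi_1=\varphi_A$, $\phi_2=\varphi_B$, and base point $y_0=b_0^{\ast}$. The point $(x_0,b_0^{\ast})$ lies in $B\subset\operatorname*{dom}\varphi_B$, and $x_0\in D(A)\subset\operatorname*{Pr}_X\operatorname*{dom}\varphi_A$ via $(x_0,a_0^{\ast})\in A$, so the non-continuity hypotheses of that theorem are met.

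The main obstacle is to verify the remaining hypothesis of Theorem \ref{zali2}: that $\varphi_B(\cdot,b_0^{\ast})$ is $\tau$-continuous at $x_0$. This function is proper, $\tau$-lsc and convex on $X$ (lsc from $\varphi_B\in\Gamma_{\tau\times w^{\ast}}(Z)$, and proper since $\varphi_B(x_0,b_0^{\ast})=\langle x_0,b_0^{\ast}\rangle\in\mathbb{R}$). Because $X$ is barreled, such a function is continuous at every interior point of its effective domain, so it suffices to exhibit a $\tau$-neighborhood of $x_0$ on which it is bounded above. This is where barreledness and the interiority hypothesis combine: passing to a maximal monotone extension of $B$ via Zorn's lemma and invoking the classical local boundedness of monotone operators on barreled LCS at interior domain points yields a neighborhood $U$ of $x_0$ on which $B(U)$ is equicontinuous. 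Combined with the monotonicity estimate $\langle u,b_0^{\ast}\rangle+\langle x,u^{\ast}\rangle-\langle u,u^{\ast}\rangle\le\langle x_0,b_0^{\ast}\rangle+\langle x-x_0,u^{\ast}\rangle$ valid for every $(u,u^{\ast})\in B$, and a splitting argument that uses monotonicity along line segments to reduce the case $u\notin U$ to the equicontinuous part $B(U)$, this produces the required uniform upper bound on a $\tau$-neighborhood of $x_0$.

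Once Theorem \ref{zali2} applies, a translation of its conjugate formula into the natural duality $(Z,Z)$ identifies the $\square$-conjugate
\[
h(x,x^{\ast}):=\min\{\psi_A(x,x_1^{\ast})+\psi_B(x,x_2^{\ast})\mid x_1^{\ast}+x_2^{\ast}=x^{\ast}\},\qquad(x,x^{\ast})\in Z,
\]
in which the minimum is attained whenever finite. As a $\square$-conjugate, $h\in\Gamma_{\tau\times w^{\ast}}(Z)$; and from $\psi_A,\psi_B\ge c$ together with $x_1^{\ast}+x_2^{\ast}=x^{\ast}$ one gets $h\ge c$, so $h\in\mathscr{R}$.

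To conclude $[h=c]=A+B$: if $h(x,x^{\ast})=c(x,x^{\ast})$, then in the attained minimizing decomposition $x^{\ast}=x_1^{\ast}+x_2^{\ast}$ both summands must saturate the bound $\psi_\bullet\ge c$, giving $(x,x_1^{\ast})\in[\psi_A=c]=A$ and $(x,x_2^{\ast})\in[\psi_B=c]=B$; hence $x^{\ast}\in(A+B)(x)$. Conversely, if $x^{\ast}=a^{\ast}+b^{\ast}$ with $a^{\ast}\in Ax$, $b^{\ast}\in Bx$, then $h(x,x^{\ast})\le\psi_A(x,a^{\ast})+\psi_B(x,b^{\ast})=\langle x,x^{\ast}\rangle$, so equality holds. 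Therefore $h$ is a representative of $A+B$, so $A+B$ is representable, $V=X\in\mathscr{V}(x)$ witnesses $V$-representability for every $x\in X$, and low-representability follows at once.
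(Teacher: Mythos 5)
Your overall architecture (apply Theorem \ref{zali2} to the two Fitzpatrick functions, pass to $\square$-conjugates, and read off a representative from the attained minimum together with $[\psi_A=c]=A$, $[\psi_B=c]=B$) is the same as the paper's, but the step on which your global strategy hinges --- the $\tau$-continuity of $\varphi_B(\cdot,b_0^{\ast})$ at $x_0$ --- is false in general, and no splitting argument can rescue it. Take $X=\mathbb{R}^{2}$ and $B(u)=Au$ with $A(u_1,u_2)=(u_2,-u_1)$ skew; then $B$ is linear maximal monotone (hence representable) with $D(B)=X$, and for $x_0=0$, $b_0^{\ast}=0\in B(0)$ one computes $\varphi_B(x,0)=\sup_u[\langle x,Au\rangle-\langle u,Au\rangle]=\sup_u\langle u,-Ax\rangle=+\infty$ for every $x\neq0$. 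Thus $\operatorname*{dom}\varphi_B(\cdot,b_0^{\ast})=\{x_0\}$: the slice is not finite on any neighborhood of $x_0$, $x_0$ is not in the interior of its domain (so barreledness gives you nothing), and there is no ``equicontinuous part plus far part'' decomposition to exploit, because the divergence comes from pairs $(u,u^{\ast})\in B$ with $u$ arbitrarily far from $x_0$, whose contribution $\langle x-x_0,u^{\ast}\rangle$ cannot be absorbed unless $R(B)$ itself is equicontinuous. Your a priori bound $\varphi_B(x,b_0^{\ast})\le\langle x_0,b_0^{\ast}\rangle+\sigma_{R(B)}(x-x_0)$ is in fact attained in this example.

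This is precisely why the theorem asserts only $V$-representability and low-representability, and why the paper localizes before conjugating: it works with $\varphi_{B|_{V}}$ for the thin tube $V=[x_0,x]+U_1$, where $U_1+U_1\subset U_0$ and $B(x_0+U_0)\subset U_0^{\circ}$ by local equicontinuity of $B$ at the interior point $x_0$ (this is where barreledness enters). For $y\in x_0+U_1$ and $(b,b^{\ast})\in B|_{V}$, monotonicity gives $\langle y-b,b^{\ast}\rangle\le\langle y-b,y^{\ast}\rangle$ with $y^{\ast}\in B(y)\subset U_0^{\circ}$ and $y-b\in[0,x_0-x]+U_0$, a fixed set on which all of $U_0^{\circ}$ is uniformly bounded; hence $\varphi_{B|_{V}}(\cdot,0)$ is bounded above near $x_0$ and Theorem \ref{zali2} applies to the pair $\varphi_{A|_{V}}$, $\varphi_{B|_{V}}$. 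Your concluding identification of $[h=c]$ with the graph of the sum then goes through verbatim in localized form, yielding $[\psi_{(A+B)|_{V}}=c]\cap V\times X^{\ast}\subset A+B$ from the $V$-representability of $A$ and $B$. So the repair is not a patch on your continuity claim but the abandonment of the global approach in favor of the tube $V$; as written, your argument does not establish the theorem.
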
 

\begin{proof} Fix 
\[
x_{0}\in D(A)\cap\operatorname*{int}D(B),\ a_{0}^{*}\in Ax_{0},\ b_{0}^{*}\in Bx_{0},\ x_{0}^{*}=a_{0}^{*}+b_{0}^{*},\ z_{0}:=(x_{0},x_{0}^{*})\in A+B,
\]
and symmetric open convex $U_{0},U_{1}\in\mathscr{{V}}(0)$ such that
$U_{1}+U_{1}\subset U_{0}$, $x_{0}+U_{0}\subset D(B)$, and $B(x_{0}+U_{0})$
is equicontinuous; for simplicity $B(x_{0}+U_{0})\subset U_{0}^{\circ}$. 

Take an arbitrary $x\in X$ and denote by $V=[x_{0},x]+U_{1}\in\mathscr{{V}}(x)$.

Notice that $(x_{0},a_{0}^{*})\in A|_{V}$ so $x_{0}\in\operatorname*{Pr}{}_{X}(\operatorname*{dom}\varphi_{A|_{V}})$
and $\varphi_{B|_{V}}(\cdot,0)$ is continuous at $x_{0}$ since $\varphi_{B|_{V}}(\cdot,0)$
is bounded from above on $x_{0}+U_{1}$. Indeed, for every $y\in x_{0}+U_{1}\subset D(B)$,
$y^{*}\in B(y)$, $b\in V\cap D(B)$, $b^{*}\in B(b)$, we have $y-b\in[0,x_{0}-x]+U_{0}$
and $\langle y-b,b^{*}\rangle\le\langle y-b,y^{*}\rangle$. Hence,
for every $y\in x_{0}+U_{1}$, $y^{*}\in B(y)$
\[
\begin{array}{ll}
\varphi_{B|_{V}}(y,0) & =\sup\{\langle y-b,b^{*}\rangle\mid b\in V\cap D(B),\ b^{*}\in B(b)\}\\
 & \le\sup\{\langle y-b,y^{*}\rangle\mid b\in V\cap D(B),\ b^{*}\in B(b)\}\\
 & \le\sup\{|\langle x_{0}-x,u^{*}\rangle|+1\mid u^{*}\in B(x_{0}+U_{0})\}<+\infty.
\end{array}
\]
 Consider $\rho:X\times X^{*}\to\overline{\mathbb{R}}$, 
\[
\rho(y,y^{*})=\inf\{\varphi_{A|_{V}}(y,u^{*})+\varphi_{B|_{V}}(y,v^{*})\mid u^{*}+v^{*}=y^{*}\}.
\]
We apply Proposition \ref{zali2} for $E=X$, $F=(X^{*},w^{*})$ to
get
\[
\rho^{\square}(y,y^{*})=\min\{\psi_{A|_{V}}(y,u^{*})+\psi_{B|_{V}}(y,v^{*})\mid u^{*}+v^{*}=y^{*}\}.
\]

Note that $\varphi_{A+B|_{V}}\le\rho$, so, $\rho^{\square}\le\psi_{A+B|_{V}}$.
Therefore, for every $w=(y,y^{*})\in[\psi_{A+B|_{V}}=c]\cap V\times X^{*}$
there exists $u^{*}\in X^{*}$ such that $(y,u^{*})\in[\psi_{A|_{V}}=c]$,
$(y,x^{*}-u^{*})\in[\psi_{B|_{V}}=c]$, since $\rho^{\square}\ge c$.
Because $y\in V$ and $A,B$ are $V-$representable, we get that $w\in A+B$,
that is, $A+B$ is $V-$representable. \end{proof}

\section{Open problems}
\begin{enumerate}
\item If $T$ is maximal monotone, how can we better describe condition
(\ref{C})? Is it the same as $T$ is locally-NI?
\item For a fixed open convex $V$, clearly $V$ locates $T$ implies that
$T$ is $V-$NI which in turns yields condition (\ref{C}). Are any
of the converses of these two implications true?
\item If $T$ is monotone then (\ref{C}) $\Rightarrow$ $\overline{D(T)}$
is convex?
\item If $T$ is monotone, $V$ is open convex, $V\cap D(T)\neq\emptyset$,
and $\overline{V}$ locates $T$ then must $V$ locate $T$?
\item Under the hypotheses of Theorem \ref{C-amt}, is $T+N_{C}$ maximal
monotone?
\end{enumerate}

\eject

\bibliographystyle{plain}

\end{document}